\newtheorem*{assumptionA*}{\textbf{Assumptions}\hspace{-3pt}}
\Crefname{assumptionA}{\textbf{Assumptions}\hspace{-3pt}}{\textbf{H}\hspace{-3pt}}
\crefname{assumptionA}{\textbf{Assumptions}}{\textbf{A}}
\newtheorem{thm}{Theorem}
\newtheorem{defn}{Definition}
\newtheorem{lemma}{Lemma}
\newtheorem{prop}{Proposition}
\newtheorem{assmp}{Assumption}
\newtheorem{note}{Note}
\newtheorem{rem}{Remark}
\title{A Lyapunov-Tamed Euler Method for Singular SDEs}
\author{Tim Johnston, Pierre Monmarché}
\date{January 2026}
\begin{document}
\maketitle
\begin{abstract}
Many applications, such as systems of interacting particles in physics, require the simulation of diffusion processes with singular coefficients. Standard Euler schemes are then not convergent, and theoretical guarantees in this situation are scarce. In this work we introduce a Lyapunov-tamed Euler scheme, for drift coefficients for which the weak derivative is dominated by a function that obeys a certain generic Lyapunov-type condition. This allows for a range of coefficients that explode to infinity on a bounded set. We establish that, in terms of $L^p$-strong error, the Lyapunov-tamed scheme is consistent and moreover achieves the same order of convergence as the standard Euler scheme for Lipschitz coefficients. The general result is applied to systems of mean-field particles with singular repulsive interaction in 1D, yielding an error bound with polynomial dependency in the number of particles.
\end{abstract}



\section{Introduction}
Theoretical bounds for numerical approximations of SDEs have long since relied on smoothness and regularity assumptions, which ensure that the behavior of the true solution does not change too radically between grid-points, and additionally that the moments of the approximation and the true solution are suitably bounded (one may consult \cite{kloeden2013numerical} for a classic book length study). In particular, both of these assumptions are covered by ensuring that the coefficients of an SDE obey a Lipschitz assumption. In the absence of a Lipschitz assumption, even in the case where the drift coefficient $b:\mathbb{R}^d\to\mathbb{R}^d$ obeys a coercivity-type condition 
\begin{equation}\label{eq: coercivity}
    \langle b(x),x\rangle\leq c(1+\lvert x\rvert^2),
\end{equation}
which ensures the polynomial moments of the true solution are bounded, one can obtain bad performance of explicit schemes such as the Euler scheme. In particular, as shown in \cite{articlediv, ae1c5b69-79b7-3bf4-87cb-ba6dd72c5ca2}, there is a small probability (for all step-sizes) that the scheme blows up astronomically, therefore leading to a divergence in $L^p$ for the scheme as the stepsize tends to $0$. Since the moments of the scheme diverge as the stepsize tends to $0$, accurate approximation of the true solution in $L^p$ is impossible. This insight led to the development of `tamed' Euler schemes in \cite{da2b46b6-84fa-3eed-be6d-20c143c60df1}, in which the drift coefficient $b$ is replaced in the definition of the scheme by a new coefficient $b_n$ which depends on the stepsize. A large number of works have extended and developed this idea, for instance the alternative taming approaches to \cite{da2b46b6-84fa-3eed-be6d-20c143c60df1} considered in \cite{articletamed, MAO2015370}, schemes that tame also the diffusion coefficient in \cite{articledif}, higher order schemes in \cite{SABANIS201984}, sampling algorithms in \cite{BROSSE20193638, 10.1093/imanum/drad038}, SPDEs in \cite{0329d33d261740b59da99f06a80a9570, Brehier2022}, particle systems for McKean-Vlasov SDEs in \cite{10.1214/21-AAP1760}, SDEs driven by Lévy noise in \cite{doi:10.1137/151004872}, and schemes that preserve exponential stability in \cite{09794cb6-f51d-34af-8144-efe133f5a106}. Furthermore, a very general tamed method for approximating SDEs with non-Lipschitz coefficients was proposed in \cite{arnulf2012}, however the generality of this approach meant that no rate of convergence could be guaranteed (and indeed pathological cases as studied in \cite{doi:10.1080/07362994.2016.1263157, Hairer2012LossOR} show that in such general cases a rate of convergence is unlikely).

In the majority of these works the superlinearly growing drift coefficient obeys a bound of the form
\begin{equation}\label{eq: tamed assmp}
    \lvert b(x)-b(y)\rvert\leq L(1+\lvert x\rvert^l+\lvert y\rvert^l)\lvert x-y\rvert.
\end{equation}
for some $l,L>0$. After making such an assumption, given sufficient moment bounds on both the scheme and the true solution it is possible to prove convergence of the Euler scheme using a variation of the arguments one uses in the case of Lipschitz coeffients.

However, less attention has been given to SDEs whose coefficients explode to infinity on a compact 
subset of $\mathbb{R}^d$, which is the focus of this article. This class of examples is especially important in modeling particles, in particular in statistical physics and in molecular simulations, where the repulsion forces can typically diverge as two particles get close to one another. One may consult \cite{29acd3d494044594aea0829ef236aad6} for example for a book-long study from a mathematical perspective. In particular, in the mean-field regime, at the continuous-time level, these models have been the topic of a number of works over recent years, with important progresses in establishing quantitative propagation of chaos as the number of particles goes to infinity in the presence of singular forces, including the 2D vortex model \cite{jabin2018quantitative,guillin2024uniform,monmarche2024time,wang2025sharp} or Coulomb and Riesz gaz \cite{Serfaty,de2023sharp,Guillin2022OnSO} and also second-order systems~\cite{2024arXiv240204695B}. Tamed algorithms for mean-field systems with non-globally Lipschitz  coefficients have recently been considered in \cite{2025arXiv251016427S,chen2025wellposedness,tran2025infinite} (see also references within, in particular the literature comparison in Table~1 of \cite{2025arXiv251016427S}), but focusing on conditions in the spirit of~\eqref{eq: tamed assmp}, covering coefficients growing faster than linearly at infinity but not singular cases. For singular forces, the weak convergence of a class of Euler-type schemes was studied in \cite{refId0} for the underdamped Langevin dynamics.

In \cite{JOHNSTON2026104772} a tamed Euler scheme was given for drift coefficients that are not locally integrable, specifically SDEs taking values in an open set $D\subset \mathbb{R}^d$ for which Lipschitz constant of the drift coefficient $b$ blows up on $\partial D$. In the present work we generalise the assumptions of \cite{JOHNSTON2026104772} to a more general and natural `Lyapunov-type' condition, namely that the drift coefficient satisfies $b:D\to\mathbb{R}^d$ satisfies
\begin{equation}\label{eq: Lyapunov Lipschitz b}
      \lvert  b(x)- b(y)\rvert \leq  ( V(x)+ V(y))\lvert x-y \rvert,
\end{equation}
for some function $V:D\to[0,\infty)$ for which $V(x)\to\infty$ as $x\to\partial D$. This condition implies that the weak derivative of $\nabla b$ is bounded by $V$, see the calculation in \eqref{eq: nabla b bound}. It follows that our main main object of interest is the following SDE taking values in an open set $D\subset \mathbb{R}^d$ with additive noise, that is
\begin{equation}\label{eq: SDE main}
    dX_t=b(X_t)dt+\beta dW_t, \;\;\;X_0=x_0\in D,
\end{equation}
where $\beta>0$ is a constant which controls the intensity of the noise. Therefore, given control of moments of the scheme and the true solution composed with $V$, one may control crucial errors arising from the discretisation. See also \cite{chak2024regularity} for a similar study under a condition of the form~\eqref{eq: Lyapunov Lipschitz b}, although set on the whole space $\mathbb R^d$ (i.e. without singularities).

In considering such coefficients, which can be in a sense `discontinuous' if considered as functions on the whole of $\mathbb{R}^d$, this work is part of a large family of works extended the analysis of numerical methods for SDEs to pathological cases, including for discontinuous coefficients as recently considered by a number of authors, see \cite{Leobacher2016multi, articlemg, 10.1214/20-EJP479, 10.1214/22-AAP1867}, with matching lower bounds in \cite{nonliplowerbound, 10.1214/22-AAP1837,ELLINGER2024101822}. This fast moving area is covered in the survey paper \cite{mullergronbach2024complexity}. However, our techniques and philosophy are much closer to the tamed Euler scheme literature than to the approach used for discontinuous coefficients.

We apply our formalism to two main examples: firstly in Section \ref{sec: singular ips} we show that  this scheme can be used for systems of interacting particles with singular interaction kernel, which was the key example that instigated this study. In this direction we show that a class of interacting particle systems can be approximated at rate $1$ with polynomial dependence on the number of particles. As far as we are aware this is the first theoretical result on the strong approximation of such a system. Furthermore, in Section \ref{sec: recovering tamed schemes} we show that our scheme can easily cover a standard setting often used  in the tamed Euler scheme literature, where $b$ obeys a polynomial Lipschitz assumption, as in \eqref{eq: tamed assmp}.
\section{Assumptions and Main Results}
\subsection{Assumptions}
In order that our approach be fruitful, it is necessary that the dynamics of \eqref{eq: SDE main} preserve moments of $V(X_t)$ on a finite time interval. Furthermore, it turns out we shall need an amount of `give' to account for the errors induced by the discretisation. To this end, let us recall the generator of \eqref{eq: SDE main} as the operator $\mathcal{L}:C^2(D)\to C(D)$ given as
\begin{equation}\label{eq: generator defn}
\mathcal{L} K :=\langle \nabla K, b\rangle +\frac{\beta}{2}\Delta K.
\end{equation}
Then our first assumption is as follows
\begin{assmp}\label{assmp: Lyapunov fn}
There exists an open set $D\subset \mathbb{R}^d$ and functions $b\in C(\mathbb{R}^d,\mathbb{R}^d)$ and $V\in C^2(D,[0,\infty))$ such that \eqref{eq: Lyapunov Lipschitz b} holds for every $x,y \in D$. Furthermore, for every $p\geq 2$ there exist $a_p,b_p>0$ and $q_p>1$ such that 
\begin{equation}\label{eq: generator assmp}
\mathcal{L} (V^p) +\frac{1}{q_p}\lvert \nabla (V^p)\rvert^{q_p}\leq a_p +b_pV^p,
\end{equation}
where $\mathcal{L}$ is as given in \eqref{eq: generator defn}. We assume these constants additionally satisfy $(a_q)^{p/q}\leq a_p$ and $(b_q)^{p/q}\leq b_p$ for $2\leq p<q$, and that $q_p>1$ is decreasing in $p\geq 2$. Furthermore, for every sequence $x_n\in D$ such that $x_n\to x\in \partial D$ one has
\begin{equation}\label{eq: explosion on boundary}
    V(x_n)\to \infty.
\end{equation}
Additionally, one can bound $\lvert \nabla V\rvert$ by a power of $V$, that is there exists $C,l>0$ such that
\begin{equation}\label{eq: nabla V power bound}
\lvert \nabla V(x)\rvert \leq C(1+V(x)^l).
\end{equation}
Finally, there exists $\mu\in \mathbb{R}$ such that the following monotonicity assumption holds
\begin{equation}\label{eq: mon assump}
\langle b(x)-b(y),x-y\rangle\leq \mu \lvert x-y\rvert^2.
\end{equation}
\end{assmp}

\begin{rem}
    The second term on the LHS of \eqref{eq: generator assmp} enforces a limit on the growth of $V$, and therefore $b$. In particular it rules out coefficients that grow exponentially at infinity, since in such a context this second term will be of higher order than any negative term given in the main generator expression.
\end{rem}
The assumption that the Lyapunov function $V$ obeys \eqref{eq: Lyapunov Lipschitz b} shall be crucial for the increment bounds required for our analysis. The assumption \eqref{eq: generator assmp} ensures that moments of $V(X_t)$ and $V(X^n_t)$ can be appropriately bounded, with the second term on the LHS corresponding to errors from the discretization. The assumption in \eqref{eq: explosion on boundary} and \eqref{eq: nabla V power bound} are simply used to ensure that a strong solution exists for the continuous time dynamics. Finally \eqref{eq: mon assump} is a standard assumption used to show an $L^p$ convergence result for the scheme via a Gröwall type argument. However, since this implies a coercivity type condition \eqref{eq: coercivity}, in our case this assumption will also be necessary for proving moment bounds, hence why we have not written it as a seperate assumption.

Finally the reason we have assumed the constants $a_p,b_p>0$ scale in a particular way with $p>0$ is so that we can collate them in a generic constant with the right scaling properties. Particularly one has
\begin{defn}\label{defn: generic constant}
Let $c>0$ denote  a constant that depends only on $T,p,\mu>0$, i.e. the length of the interval over which the numerical simulations are considered, and the $L^p$ space in which moment and convergence bounds are proven. We also assume $c>0$ depends on the convexity parameter $\mu>0$, for technical reasons due to how we calculate moment and convergence bounds, and since this shall not change when we consider families of interacting particle systems. Furthermore, for every $q>0$ we denote by $H_q>0$ a constant of the form
\begin{equation}\label{eq: H_p defn}
    H_q:=c(1+\lvert b(x_0)\rvert^q+V(x_0)^q+\lvert x_0\rvert^q+\beta^{q}d^{q/2} +a_q)e^{b_q T},
\end{equation}
where $c>0$ is as above.
\end{defn}
The assumption on $a_p,b_p>0$ then ensures for $p>q>0$ one has
\begin{equation}\label{eq: constant condition}
    (H_q)^{p/q}\leq H_p,\;\;\;H_pH_{q}\leq H_{p+q},
\end{equation}
where the second part follows from Young's inequality. Furthermore, if one denotes $q_p'>1$ such that
\begin{equation}\label{eq: qp' defn}
   1/ q_p'+1/q_p=1,
\end{equation}
we shall see that the quantity $q_p'>1$ plays a crucial role, and therefore under the assumption $q_p$ is decreasing in $p>0$ one shall obtain that $q_p'$ is increasing in $p>0$. In the example of the singular interacting particle system considered in Section \ref{sec: singular ips}, one has that $q_{p}'$ is an affine function of $p$. 

Our second assumption is a smoothness assumption which pertains to the gradient of $b$. Under this assumption we shall be able to upgrade the rate of convergence from $1/2$ to $1$, reflecting results in the Lipschitz and superlinear case, see for example \cite{articletamed}.
\begin{assmp}\label{assmp: Lyapunov controls nabla b}
Let $b\in C^1(D)$. Suppose there exists $\hat{V}\in C^2(D)$ which satisfied Assumption \ref{assmp: Lyapunov fn} for new constants $\hat{a}_p, \hat{b}_p, \hat{q}_p>0$, and for which the following improved smoothness condition additionally holds
\begin{equation}\label{eq: higher order Lyapunov}
  \lvert b(x)-b(y)- \nabla b(x)(x-y) \rvert \leq c (1+ \hat{V}(x)+ \hat{V}(y))\lvert x-y \rvert^2.
\end{equation} 
\end{assmp}
For the constants $\hat{a}_p, \hat{b}_p, \hat{q}_p>0$ given in Assumption \ref{assmp: Lyapunov controls nabla b} we introduce a new generic constant
\begin{defn}\label{defn: generic constant 2}
    For $c>0$ as given in Definition \ref{defn: generic constant}, one defines a new generic constant
       \begin{equation}\label{eq: H_p hat defn}
    \hat{H}_q:=c(1+\lvert b(x_0)\rvert^q+V(x_0)^q+\lvert x_0\rvert^q+\beta^{q}d^{q/2} +\hat{a}_q)e^{\hat{b}_q T}.
\end{equation}
\end{defn}
The constants $H_q, \hat{H}_q$ shall play a crucial role when we consider the interacting particle system, since we shall be interested in following their dependence on the number of particles $N$.
\subsection{Tamed Scheme}
As with other tamed schemes in the literature, we shall introduce new coefficients that depend on the stepsize, so as to `curb' the behavior of the scheme in the cases where the drift coefficient would otherwise take very large values. However, in our case we do not hide the fact that our scheme will be nonsensical in these rare cases, and merely rely on the fact that such pathological trajectories will be extremely rare for small stepsize (see \eqref{eq: tau_n defn} and Lemma \ref{lemma: decay of stopping time}). For this reason, the scheme may not be as practical as others with the same theoretical properties, however we do not bother to address this since the focus of this article is on theoretical convergence.

Our new coefficient $b_n:\mathbb{R}^d\to\mathbb{R}^d$, corresponding to a stepsize $n^{-1}$, will then be given as
\begin{equation}\label{eq: truncated coef}
b_n(x):=b(x)\cdot 1_{x\in D}\cdot 1_{V(x)\leq n^{1/2}}\cdot 1_{\lvert b(x)\rvert \leq n^{1/2}},
    \end{equation}
so as to be truncated when $V(x)$ or $b(x)$ takes too large a value, or when $x\not \in D$. The associated tamed Euler scheme is therefore given as
\begin{equation}\label{eq: tamed discrete}
   x^n_{m-1}=x^n_{m-1}+n^{-1}b_n(x_n)+n^{-1/2}\beta z_m, \;\;\;x^n_0=x_0\in D,
\end{equation}
where $(z_m)_{m\geq 1}$ is a sequence of $\mathcal{N}(0,I_d)$ random variables. However, for convenience, and in order to study the strong rate of convergence, we shall consider the continuous time lift \begin{equation}\label{eq: scheme defn}
      dX^n_t:=b_n(X^n_{\kappa_n(t)})dt+\beta dW_t,\;\;\; X^n_0=x_0\in D
    \end{equation}
     where $\kappa_n(t):=\frac{\lfloor nt\rfloor}{n}$ is the backwards projection onto the grid $\{0,1/n,2/n,...\}$, and $X^n_t$ is driven by the same noise as the continuous time process $X_t$ given in \eqref{eq: SDE main}.
     \subsection{Main Results}
     Our two main general convergence theorems are thus given as follows
     \begin{thm}\label{thm: conv of scheme}
        Let Assumption \ref{assmp: Lyapunov fn} hold. Then the continuous time SDE \eqref{eq: SDE main} has a unique strong solution $X_t$ for $t\in[0,\infty)$, and satisfies $X_t\in D$ for every $t>0$ almost surely. Furthermore, for every $T>0$ and $p\geq2$ there exists $c>0$ such that for every $n \geq V(x_0)^2$, the tamed scheme \eqref{eq: scheme defn} satisfies 
         \begin{align}
       \sup_{t\in [0,T]} E\lvert X^n_t-X_t\rvert^p \leq H_{24p+q_{12p}'}n^{-p/2},
    \end{align}
        for the generic constant $H_{(\cdot)}$ introduced in Definition \ref{defn: generic constant}, and for $q_{10p}'>0$ given by 
        \eqref{eq: qp' defn} and Assumption \ref{assmp: Lyapunov fn}.
     \end{thm}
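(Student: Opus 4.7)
My approach splits into three stages: strong existence together with moment bounds on $V(X_t)$ and $V(X^n_t)$, an $L^p$ error estimate on a ``good'' event where the taming is inactive, and tail control on the complementary event via Cauchy--Schwarz.

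\textit{Existence and moment bounds.} For the SDE, I would use the standard localisation with exit times $\tau^{(k)} := \inf\{t : V(X_t) > k\}$: inside $\{V \leq k\}$ the drift is Lipschitz by \eqref{eq: Lyapunov Lipschitz b} and continuity of $V$, so strong existence holds up to $\tau^{(k)}$. Itô's formula on $V(X_t)^p$ combined with \eqref{eq: generator assmp} and Gronwall yields $\sup_{t \leq T} E\, V(X_{t \wedge \tau^{(k)}})^p \leq H_p$; the blow-up condition \eqref{eq: explosion on boundary} then forces $\tau^{(k)} \uparrow \infty$ and $X_t \in D$ a.s. For the scheme, introduce the stopping time $\tau_n := \inf\{t : V(X^n_{\kappa_n(t)}) > n^{1/2} \text{ or } |b(X^n_{\kappa_n(t)})| > n^{1/2}\}$, on which $b_n \equiv b$ along the grid. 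Applying Itô to $V(X^n_{t \wedge \tau_n})^p$ generates the drift
\[
\mathcal{L}(V^p)(X^n_t) + \nabla V^p(X^n_t)\cdot\bigl(b(X^n_{\kappa_n(t)}) - b(X^n_t)\bigr),
\]
whose cross-term I would dominate by Young's inequality at the conjugate pair $(q_p,q'_p)$: the piece $\frac{1}{q_p}|\nabla V^p|^{q_p}$ is absorbed into the negative contribution in \eqref{eq: generator assmp}, and the residual $|b(X^n_{\kappa_n(t)}) - b(X^n_t)|^{q'_p}$ is bounded by \eqref{eq: Lyapunov Lipschitz b} using that $|X^n_t - X^n_{\kappa_n(t)}|$ is $O(n^{-1/2})$ in every $L^q$ on $\{t<\tau_n\}$. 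Gronwall delivers $\sup_{t\leq T}E\,V(X^n_{t\wedge \tau_n})^p \leq H_p$, and Markov then gives $P(\tau_n \leq T) \lesssim H_k n^{-k/2}$ for any admissible $k$.

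\textit{Error on the good event.} With $E_t := X_t - X^n_t$, the noise cancels pathwise and
\[
\tfrac{d}{dt}|E_t|^2 = 2\langle E_t, b(X_t) - b(X^n_t)\rangle + 2\langle E_t, b(X^n_t) - b(X^n_{\kappa_n(t)})\rangle.
\]
The first bracket is at most $2\mu|E_t|^2$ by \eqref{eq: mon assump}; the second is bounded by \eqref{eq: Lyapunov Lipschitz b} in terms of $(V(X^n_t) + V(X^n_{\kappa_n(t)}))|X^n_t - X^n_{\kappa_n(t)}|$. Raising to the $p/2$-th power, separating $|E_t|^p$ from the Lyapunov--increment factors with a further Young step, taking expectations and invoking Gronwall yields $\sup_{t\leq T} E|E_{t\wedge \tau_n}|^p \lesssim H_{\star} n^{-p/2}$, where the product bounds \eqref{eq: constant condition} control how $H_p$-factors on $V(X^n)$ combine with $H_p$-factors on the increments.

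\textit{Tail handling and main obstacle.} Writing $E|E_t|^p = E[|E_t|^p 1_{\tau_n > T}] + E[|E_t|^p 1_{\tau_n \leq T}]$, the first term is handled above. For the second I would apply Cauchy--Schwarz: $E[|E_t|^p 1_{\tau_n \leq T}] \leq (E|E_t|^{2p})^{1/2}P(\tau_n \leq T)^{1/2}$. A crude $L^{2p}$ bound on $E_t$ uses that $|b_n|\leq n^{1/2}$ globally (hence standard Euler-type $L^{2p}$ moments for $X^n$) and the moment bound for $X_t$; the tail probability is controlled by Markov at a sufficiently high moment of $V(X^n_{\cdot\wedge\tau_n})$. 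Balancing the exponents, accounting for the $q'_p$ appearing in the scheme's moment step and the polynomial powers accumulated through successive Young and Gronwall applications, produces precisely the exponent $24p + q'_{12p}$. The crux of the argument is the moment bound on $V(X^n_t)^p$: the discretisation error $\nabla V^p \cdot (b_n \circ \kappa_n - b)$ does not factor through any a~priori smallness, and the extra term $\tfrac{1}{q_p}|\nabla V^p|^{q_p}$ in \eqref{eq: generator assmp} is tailored exactly to absorb it via the conjugate Young inequality. Making this absorption uniform in $n$ and tracking the scaling $(H_q)^{p/q}\leq H_p$ through every step is where I expect the real work to lie.
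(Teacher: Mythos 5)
Your plan follows essentially the same three-stage structure as the paper: $V$-moment bounds for both processes via a stopped It\^o formula and the conjugate-Young step tailored to \eqref{eq: generator assmp}, a Gr\"onwall argument on the error up to $\tau_n$ using the monotonicity condition and the increment bound, and a Cauchy--Schwarz estimate on the tail event where the taming is active. Two points, however, need fixing before the plan becomes rigorous, and they are precisely where the paper invests its technical effort.

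First, you define $\tau_n$ through the grid values $V(X^n_{\kappa_n(t)})$ and $|b(X^n_{\kappa_n(t)})|$, whereas the paper stops at the first \emph{continuous} time at which $V(X^n_t)$ or $|b(X^n_t)|$ exceeds $n^{1/2}$ (see \eqref{eq: tau_n defn}). This matters: to apply the stopped It\^o formula of Proposition \ref{prop: local Ito} to $V(X^n_{t\wedge\tau_n})^p$ and to verify that the resulting stochastic integral is a true martingale, you need $V(X^n_s)$ and $\nabla V(X^n_s)$ almost surely bounded on $[0,\tau_n)$; your grid-point definition leaves $V(X^n_s)$ uncontrolled between grid points and $X^n_s$ free to approach $\partial D$ there, so neither the It\^o step nor the Gr\"onwall step immediately goes through. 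Second, the step ``$\sup_{t\leq T}E\,V(X^n_{t\wedge\tau_n})^p\leq H_p$, and Markov then gives $P(\tau_n\leq T)\lesssim H_k n^{-k/2}$'' has a genuine gap. Since $\{\tau_n\leq T\}=\{\sup_{t\leq T}\max\{V(X^n_{t\wedge\tau_n}),|b(X^n_{t\wedge\tau_n})|\}\geq n^{1/2}\}$, Markov's inequality requires an $E\sup_{t\leq T}$ moment bound, not merely $\sup_t E$. The paper bridges this by proving the $V$-moment estimate \emph{uniformly over all stopping times} (Lemma \ref{lem: V bounds scheme stopped}) and then invoking Lenglart's domination inequality (Proposition \ref{prop: Langlart ineq}) to upgrade to an $E\sup$ bound. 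One could alternatively evaluate the moment bound directly at the stopping time $\tau_n\wedge T$ (which the uniformity over stopping times permits), or take a union bound over the $O(n)$ grid points at the cost of an extra polynomial factor absorbed by choosing a larger power --- but some such device is indispensable; plain Markov applied to $\sup_t E$ does not close the argument.
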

     The bounds given in Theorem \ref{thm: conv of scheme} are conservative in order to clean up the analysis. However, the fact that the order of the constant in front of the rate depends on the conjugate of the constant $q_p>0$ given in Assumption \ref{assmp: Lyapunov fn} seems like an unalterable part of our analysis (see \eqref{eq: scheme V Ito}). In addition, the condition that $n \geq V(x_0)^2$ ensures that the scheme starts in the reasonable regime where $b_n(x_0)=b(x_0)$. 

     With the greater smoothness assumption of Assumption \ref{assmp: Lyapunov controls nabla b}, one is able to obtain convergence at rate $1$.
     \begin{thm}\label{thm: conv of scheme 2}
         Let Assumption \ref{assmp: Lyapunov controls nabla b} hold. Then \eqref{eq: SDE main} has a unique strong solution in $D$ as before, and furthermore for every $n \geq V(x_0)^3$ and $p\geq 4$ one has
           \begin{align}
       \sup_{t\in [0,T]} E\lvert X^n_t-X_t\rvert^p \leq (H_{24p+q'_{12p}}+\hat{H}_{7p+q'_{5p}})n^{-p},
    \end{align}
    where $\hat{H}_{(\cdot)}$ is the generic constant given in Definition \ref{defn: generic constant 2}.
     \end{thm}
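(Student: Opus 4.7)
The plan is to refine the proof of Theorem \ref{thm: conv of scheme} by exploiting the Taylor bound \eqref{eq: higher order Lyapunov} to upgrade the one-step consistency error from order $n^{-1/2}$ to $n^{-1}$. Since $\hat V$ satisfies Assumption \ref{assmp: Lyapunov fn}, strong existence of $X_t$ in $D$ as well as the moment bounds on $V, \hat V, |b|$ and on the increments $|X^n_s - X^n_{\kappa_n(s)}|^q$ are all immediate from the infrastructure of Theorem \ref{thm: conv of scheme}. Because the noise cancels in $Y_t := X^n_t - X_t$, the pathwise chain rule gives
\begin{equation*}
|Y_t|^p = \int_0^t p|Y_s|^{p-2}\langle Y_s, b_n(X^n_{\kappa_n(s)}) - b(X_s)\rangle ds,
\end{equation*}
and one splits the bracket into a contracting part $b(X^n_s)-b(X_s)$ (bounded by $\mu|Y_s|^2$ via \eqref{eq: mon assump}), a truncation part that is Markov-small thanks to the moment bounds on $V$ and $|b|$, and a one-step error $b(X^n_{\kappa_n(s)})-b(X^n_s)$. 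The first two pieces reproduce the $O(H_{24p+q'_{12p}}n^{-p})$ contribution exactly as in Theorem \ref{thm: conv of scheme}.

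For the one-step error I would use \eqref{eq: higher order Lyapunov} to write
\begin{equation*}
b(X^n_{\kappa_n(s)}) - b(X^n_s) = -\nabla b(X^n_{\kappa_n(s)})(X^n_s - X^n_{\kappa_n(s)}) - R_s,
\end{equation*}
with $|R_s| \leq c(1+\hat V(X^n_s)+\hat V(X^n_{\kappa_n(s)}))|X^n_s - X^n_{\kappa_n(s)}|^2$. Substituting $X^n_s - X^n_{\kappa_n(s)} = (s-\kappa_n(s))b_n(X^n_{\kappa_n(s)}) + \beta(W_s - W_{\kappa_n(s)})$, the quadratic remainder contributes $O(\hat H_{7p+q'_{5p}}n^{-p})$ via Cauchy--Schwarz (using $\||X^n_s-X^n_{\kappa_n(s)}|^2\|_{L^p} = O(n^{-1})$), the deterministic drift piece $(s-\kappa_n(s))\nabla b\cdot b_n$ contributes $O(n^{-p})$ after Young's inequality, while the delicate term is the Brownian piece $-\beta\nabla b(X^n_{\kappa_n(s)})(W_s - W_{\kappa_n(s)})$ whose naive $L^p$ size is only $O(n^{-1/2})$.

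The main obstacle is extracting the missing order from this Brownian piece through iterated martingale cancellation. The strategy is to split $|Y_s|^{p-2}Y_s$ into the $\mathcal F_{\kappa_n(s)}$-measurable piece $|Y_{\kappa_n(s)}|^{p-2}Y_{\kappa_n(s)}$, which pairs with $W_s-W_{\kappa_n(s)}$ to give zero expectation, and a correction handled by Taylor-expanding $y \mapsto |y|^{p-2}y$ at $Y_{\kappa_n(s)}$. Writing $Y_s-Y_{\kappa_n(s)} = (s-\kappa_n(s))b_n(X^n_{\kappa_n(s)}) - \int_{\kappa_n(s)}^s b(X_u)du$, the $\mathcal F_{\kappa_n(s)}$-measurable drift piece once again vanishes against $\Delta W$, while the integrand $b(X_u)$ is further expanded via \eqref{eq: higher order Lyapunov} as $b(X_{\kappa_n(s)})+\nabla b(X_{\kappa_n(s)})(X_u-X_{\kappa_n(s)})+R'_u$; only the $\beta\nabla b(X_{\kappa_n(s)})(W_u-W_{\kappa_n(s)})$ term survives $\mathcal F_{\kappa_n(s)}$-conditioning, and its covariance with $W_s-W_{\kappa_n(s)}$ produces a factor $(u-\kappa_n(s))$ that, after integration over $u\in[\kappa_n(s),s]$, yields a net contribution of order $n^{-2}|Y_{\kappa_n(s)}|^{p-2}$. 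Young's inequality $n^{-2}u^{(p-2)/p} \leq \epsilon u + Cn^{-p}$ then converts this into a Gronwall-compatible term. Assembling every contribution yields
\begin{equation*}
E|Y_t|^p \leq (H_{24p+q'_{12p}} + \hat H_{7p+q'_{5p}})n^{-p} + c\int_0^t E|Y_s|^p ds,
\end{equation*}
and Gronwall delivers the announced bound; the assumption $n \geq V(x_0)^3$ (strengthening $n \geq V(x_0)^2$ of Theorem \ref{thm: conv of scheme}) ensures the scheme starts with $b_n(x_0)=b(x_0)$ and calibrates the truncation threshold at the refined rate.
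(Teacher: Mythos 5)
Your decomposition is a genuinely different route from the paper's: you insert the intermediate value $b(X^n_s)$ (scheme at continuous time) and keep the weight $|Y_s|^{p-2}Y_s$ at continuous time, whereas the paper inserts $b(X_{\kappa_n(s)})$ (the true solution at the grid) and immediately moves the weight to the grid point $|e_{\kappa_n(s)}|^{p-2}e_{\kappa_n(s)}$ before any Taylor expansion. Because of this, a one-step term $-\beta\nabla b(X^n_{\kappa_n(s)})\Delta W_s$ of naive $L^p$ size $n^{-1/2}$ survives in your splitting and must be killed against a weight that is no longer $\mathcal F_{\kappa_n(s)}$-measurable. You correctly identify this as the obstacle, but the iterated cancellation as sketched does not close.

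Concretely, write $f(y):=|y|^{p-2}y$, $\delta Y:=Y_s-Y_{\kappa_n(s)}$, and split $f(Y_s)=f(Y_{\kappa_n(s)})+Df(Y_{\kappa_n(s)})\delta Y+R_2$. The first piece pairs to zero against $\Delta W_s$, and after the second conditioning the linear piece yields $O(n^{-2})|Y_{\kappa_n(s)}|^{p-2}$, which Young's inequality converts into $\varepsilon|Y|^p+Cn^{-p}$, exactly as you claim. But the remainder obeys $|R_2|\le c\,(|Y_s|^{p-3}+|Y_{\kappa_n(s)}|^{p-3})|\delta Y|^2$ and is \emph{not} $\mathcal F_{\kappa_n(s)}$-measurable, so its pairing with $\Delta W_s$ gives, without a further iteration, a term of size
\begin{equation*}
n^{-1/2}\cdot n^{-2}\cdot\bigl(E|Y|^p\bigr)^{(p-3)/p}=n^{-5/2}\bigl(E|Y|^p\bigr)^{(p-3)/p},
\end{equation*}
and Young then only delivers $n^{-5p/6}$, not $n^{-p}$ (for $p=4$ this is $n^{-10/3}$ instead of $n^{-4}$). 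To close the argument you would need to Taylor-expand $f$ to order $\sim p$ and repeat the conditioning at every level; this terminates for even integers $p$ because $f$ is then polynomial, but for non-integer $p\ge 4$ the remainder analysis is delicate and your sketch does not address it.

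The paper avoids the iterated cancellation entirely by choosing a different pairing. After inserting $b(X_{\kappa_n(s)})$, its term $I^{13}_t$ pairs $\nabla b(X_{\kappa_n(s)})(X_s-X_{\kappa_n(s)})$ directly with $|e_{\kappa_n(s)}|^{p-2}e_{\kappa_n(s)}$, which \emph{is} $\mathcal F_{\kappa_n(s)}$-measurable; the Brownian part of $X_s-X_{\kappa_n(s)}$ then vanishes in one stroke, leaving only the deterministic-sized $\int_{\kappa_n(s)}^s b(X_u)\,du=O(n^{-1})$. The weight-change correction $I^{14}_t$ is paired with $b(X_{\kappa_n(s)})-b(X^n_{\kappa_n(s)})$, which by Assumption~\ref{assmp: Lyapunov fn} carries a factor $|e_{\kappa_n(s)}|$; since $e_s-e_{\kappa_n(s)}$ also has no noise, the only small factor in that Young split is $|e_s-e_{\kappa_n(s)}|=O(n^{-1})$, which yields $n^{-p}$ cleanly for all $p\ge 4$. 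That structural choice — grid-point weight before expanding, and pairing the weight correction with an $|e|$-controlled factor rather than with the Brownian one-step error — is the step missing from your approach.
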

     \begin{rem}
The fact that the dependence on the generic constant $H_{(\cdot)}$ is the same in both main Theorems is an artifact of various simplifications in the calculations. If all dependencies were optimised the dependence in Theorem \ref{thm: conv of scheme} would be less severe than the dependence in Theorem \ref{thm: conv of scheme 2}.
     \end{rem}
     \subsection{Properties of Tamed Coefficient}
Before proving moment bounds of the scheme and the true solution, it shall be useful to derive some key properties from the assumptions given in the previous section. All the following bounds are given under Assumption \ref{assmp: Lyapunov fn}, and so may be easily applied given variant constants under Assumption \ref{assmp: Lyapunov controls nabla b}.

First of all, let us prove growth bounds on $b(x)$. In these bounds a `generic' point is required from which the Lipschitz-type assumption \eqref{eq: Lyapunov Lipschitz b} can be applied (in the standard tamed literature it may be that $0$ is chosen for this role, however in our case we don't necessarily expect $0\in D$). However, since we wish to consider families of SDEs with $N\in \mathbb{N}$ interacting particles, we must be careful that there are no unintended dependencies. Therefore, although it may be suboptimal, we pick the initial condition $x_0\in D$ as this `generic' point. One notes immediately that Assumption \ref{assmp: Lyapunov fn} implies 
    \begin{align}
       \lvert b(x)\rvert &\leq \lvert b(x_0)\rvert+\lvert b(x)-b(x_0)\rvert \nonumber \\
       &\leq \lvert b(x_0)\rvert+(1+V(x_0)+V(x))\lvert x-x_0\rvert,
    \end{align}
    and therefore since $\lvert b(x_0)\rvert \leq H_1$ and $V(x_0)\leq H_1$ by the definition of the generic constant in Definition \ref{defn: generic constant}, one has
    \begin{align}\label{eq: first b bound}
       \lvert b(x)\rvert \leq H_1(1+V(x))(1+\lvert x \rvert).
    \end{align}
    Therefore, for every $q>0$ it follows
    \begin{equation}\label{eq: upper bound for b}
 \lvert b(x)\rvert ^q\leq H_q (1+\lvert x \rvert^q)(1+V(x)^q),
    \end{equation}
    and so by Young's inequality
    \begin{align}\label{eq: upper bound for b 2}
       \lvert b(x)\rvert^q \leq H_q(1+\lvert x \rvert^{2q}+V(x)^{2q}).
    \end{align}
    Now we shall use the monotonicity assumption \eqref{eq: mon assump} to obtain 
     \begin{align}
     \langle x, b(x)\rangle & = \langle x-x_0, b(x)-b(x_0)\rangle+\langle x_0, b(x)-b(x_0)\rangle+\langle x, b(x_0)\rangle \nonumber \\
     &\leq \mu\lvert x-x_0\rvert^2+\lvert x_0\rvert(\lvert b(x)\rvert+\lvert b(x_0)\rvert)+\lvert b(x_0)\rvert\lvert x\rvert.
    \end{align}
    Using Young's inequality, and recalling the generic constant $c>0$ depends on $\mu$, and that $\lvert b(x_0)\rvert\leq H_1$, one may furthermore obtain
     \begin{align}\label{eq: b bound using mon}
     \langle x, b(x)\rangle &\leq c\lvert x \rvert^2+H_1\lvert  b(x)\rvert +H_2\nonumber \\
     &\leq c\lvert x \rvert^2+H_1(1+\lvert x \rvert)(1+V(x))+H_2.
    \end{align}
Now let us find a bound for $\nabla b$ under Assumption \ref{assmp: Lyapunov controls nabla b}. Since we therefore have that $b\in C^1(D)$, and since this implies Assumption \ref{assmp: Lyapunov fn}, one simply calculates for every $x\in D$
\begin{equation}\label{eq: nabla b bound}
   \lvert  \nabla b(x)\rvert = \lim_{y\to x}\frac{\lvert b(y)-b(x)\rvert}{\lvert y-x\rvert}\leq 2V(x).
\end{equation}

\subsection{Sketch of Proof}
As was discussed in the literature some time before the development of tamed schemes, see \cite{0b5e028f-1a33-3fed-ae08-229fd5443c3f, MATTINGLY2002185}, the main problem with Euler approximations for non-globally Lipschitz coefficients is that the moments of the scheme can explode under certain conditions. Actually, in the case we consider, it shall be possible that the regular Euler scheme always has infinite moments, since the drift coefficient $b$ does not have to be locally integrable. Therefore the main challenge shall be proving moment bounds for the tamed Euler scheme we construct. After this is achieved, convergence can be shown by a standard Grönwall-type argument familiar in the numerical analysis literature. To this end our strategy is as follows
\begin{itemize}
    \item we introduce the following stopping time
 \begin{equation}\label{eq: tau_n defn}
        \tau_n:=\inf\{t\geq 0\vert \;\max\{V(X^n_t), \lvert b(X^n_t)\rvert\}\geq n^{1/2}\},
    \end{equation}
    which reflects the first time that $b_n(X^n_t)\not=b(X^n_t)$.
    \item we prove moment bounds of all orders for the Euler scheme and $V(X^n_t)$ up until the stopping time $\tau_n$
    \item we then use this to show that the probability $\tau_n\in [0,T]$ decays quickly in $n$. Specifically, for all $p>0$, by Markov's inequality one has
    \begin{align}
        &P(\tau_n\in [0,T])=P(\sup_{t\in [0,T]} \max\{V(X^n_t), \lvert b(X^n_t)\rvert\}\geq n^{1/2})\nonumber \\
        &=P(\sup_{t\in [0,T]} \max\{V(X^n_{t\wedge \tau_n})^p, \lvert b(X^n_{t\wedge \tau_n})\rvert^p\}\geq n^{p/2})\nonumber \\
&\leq n^{-p/2}E\sup_{t\in [0,T]} \max\{V(X^n_{t\wedge \tau_n})^p, \lvert b(X^n_{t\wedge \tau_n})\rvert^p\}.
    \end{align}
    \item therefore, so long as the taming we implement means that no trajectory of $X^n_t$ and $V(X^n_t)$ diverges too severely with $n\geq 1$, one may show uniform moment bounds of both over the entirety of $[0,T]$. This is the precisely the calculation in \eqref{eq: splitting argument}
\end{itemize}
The tool we use to upgrade `uniform-in-expectation' to `expectation-of-uniform' moment bounds\footnote{sometimes referred to a `weak' and `strong' moment bounds} is the Langlart domination inequality, see Proposition \ref{prop: Langlart ineq}.
\section{Moment Bounds}
The proof of the convergence of $X^n_t$ to $X_t$ will be relatively standard once we show the moments of both processes, as well as `$V$-moments' (that is, moments of both processes composed with the Lyapunov function $V$). For convenience, let us collate the information of the following section into one proposition
\begin{prop}\label{prop: mom bounds}
    Let Assumption \ref{assmp: Lyapunov fn} hold.  Then \eqref{eq: SDE main} has a unique strong solution, and for every $T>0$ and $p\geq 2$ one has
    \begin{equation}
        \sup_{t\in [0,T]} EV(X_t)^p\leq H_p,\;\;\; \sup_{t\in [0,T]}  E\lvert X_t \rvert ^p\leq H_{2p},
    \end{equation}
\begin{equation}
    \sup_{t\in [0,T]}EV(X^n_{t\wedge \tau_n} )^p\leq H_{p+q_p'},\;\;\;\sup_{t\in [0,T]}E\hat{V}(X^n_{t\wedge \tau_n} )^p\leq \hat{H}_{p+\hat{q}_p'},
\end{equation}
    \begin{equation}
     \sup_{t\in [0,T]} E\lvert X^n_{t\wedge \tau_n} \rvert^p\leq H_{2p+q_p'},\;\;\; \sup_{t\in [0,T]} E\lvert X^n_t \rvert^p\leq H_{12p+q_{6p}'}.
    \end{equation}
    where $\tau_n$ is the stopping time given in \eqref{eq: tau_n defn}.
\end{prop}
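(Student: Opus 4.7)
The plan is to prove the bounds in the order listed (continuous process, stopped scheme, unstopped scheme), using Itô's formula followed by a Grönwall argument throughout, with the Lyapunov condition \eqref{eq: generator assmp} doing the decisive work. For the continuous-time SDE, existence and uniqueness follow from a standard Khasminskii-type non-explosion argument based on the Lyapunov function: dropping the nonnegative $\frac{1}{q_p}|\nabla V^p|^{q_p}$ term from \eqref{eq: generator assmp} gives $\mathcal{L}(V^p) \leq a_p + b_p V^p$, so applying Itô to $V(X_{t\wedge\sigma_k})^p$ with the localisation $\sigma_k := \inf\{t : V(X_t)\geq k\}$ and sending $k\to\infty$ (using \eqref{eq: explosion on boundary} to ensure the process stays in $D$) yields $\sup_{t\leq T} EV(X_t)^p \leq H_p$ via Grönwall, after identifying constants with the form \eqref{eq: H_p defn}. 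Pathwise uniqueness follows from \eqref{eq: mon assump}. The moments of $|X_t|$ then come from Itô applied to $|X_t|^p$ with the drift contribution controlled by \eqref{eq: b bound using mon}; Young's inequality splits the $|X_t|^{p-1}(1+|X_t|)(1+V(X_t))$ cross-term into a piece absorbable by Grönwall and a $V(X_t)^p$ piece already bounded, giving the exponent $2p$.

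The main obstacle is the stopped-scheme $V$-moment bound $EV(X^n_{t\wedge\tau_n})^p \leq H_{p+q_p'}$, which is precisely where the assumed structure of \eqref{eq: generator assmp} is used. Apply Itô to $V(X^n_t)^p$ and rewrite the drift as $\mathcal{L}(V^p)(X^n_t) + \nabla(V^p)(X^n_t)\cdot(b_n(X^n_{\kappa_n(t)}) - b(X^n_t))$; the crucial step is to split the error term by Young's inequality with conjugate exponents $q_p, q_p'$,
\begin{equation*}
\nabla(V^p)(X^n_t)\cdot(b_n(X^n_{\kappa_n(t)})-b(X^n_t)) \leq \tfrac{1}{q_p}|\nabla V^p(X^n_t)|^{q_p} + \tfrac{1}{q_p'}|b_n(X^n_{\kappa_n(t)}) - b(X^n_t)|^{q_p'},
\end{equation*}
so that the first summand combines with $\mathcal{L}(V^p)$ through \eqref{eq: generator assmp} to give $a_p + b_p V(X^n_t)^p$. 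On $\{t\leq \tau_n\}$ the tamed coefficient agrees with $b$ at the grid point and satisfies $V(X^n_{\kappa_n(t)}), |b(X^n_{\kappa_n(t)})|\leq n^{1/2}$, so \eqref{eq: Lyapunov Lipschitz b} yields $|b(X^n_{\kappa_n(t)}) - b(X^n_t)| \leq (n^{1/2} + V(X^n_t))|X^n_t - X^n_{\kappa_n(t)}|$, with the increment deterministically of order $n^{-1/2}$ on the drift part (since $|b_n|\leq n^{1/2}$) and Gaussian of order $n^{-1/2}$ on the noise part. Taking expectations turns the error contribution into something of order $1 + EV(X^n_t)^{q_p'}$; a further Young step absorbs this into a term $\varepsilon EV^p + C_\varepsilon$, and Grönwall closes the loop, the exponent $p + q_p'$ being exactly the cost of this absorption. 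The same computation using Assumption \ref{assmp: Lyapunov controls nabla b} gives the $\hat V$-bound.

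Given the $V$-moments, the stopped-scheme $|X^n_{t\wedge\tau_n}|^p$-bound follows from Itô applied to $|X^n_t|^p$ with the drift at the grid point handled by \eqref{eq: b bound using mon}; this mirrors the continuous-time computation but picks up an additional cross term $|X^n_{\kappa_n(t)}|\cdot|X^n_t - X^n_{\kappa_n(t)}|$, handled again by the increment bound, and produces the exponent $2p + q_p'$. Finally, for the unstopped $E|X^n_t|^p$ bound I implement the splitting argument previewed in the introduction: write
\begin{equation*}
E|X^n_t|^p \leq E|X^n_{t\wedge\tau_n}|^p + (E|X^n_t|^{2p})^{1/2}\, P(\tau_n \leq T)^{1/2},
\end{equation*}
where $E|X^n_t|^{2p}$ is bounded crudely but uniformly by a polynomial in $n$ using the pathwise bound $|X^n_t| \leq |x_0| + n^{1/2}T + \beta |W_t|$ (valid because $|b_n| \leq n^{1/2}$ by definition), and Markov's inequality gives $P(\tau_n \leq T) \leq n^{-k/2} E\sup_{t\in[0,T]}\max(V(X^n_{t\wedge\tau_n})^k, |b(X^n_{t\wedge\tau_n})|^k)$ for any $k$. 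The latter is controlled by the already-established stopped bounds together with the pointwise estimate \eqref{eq: upper bound for b 2} expressing $|b|^k$ in terms of $|x|^{2k}$ and $V^{2k}$. Choosing $k$ of order $6p$ so that the $n^{-k/4}$ decay of the tail probability dominates the $n^{p/2}$ growth of $(E|X^n_t|^{2p})^{1/2}$ gives the stated exponent $12p + q'_{6p}$. The hardest part of the whole proposition is the $V$-moment step for the stopped scheme, where the exact matching between the $\frac{1}{q_p}|\nabla V^p|^{q_p}$ term in \eqref{eq: generator assmp} and the conjugate Young exponent used on the discretisation error is what prevents the argument from losing a power of $n$.
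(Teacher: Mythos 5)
Your proposal tracks the paper's strategy closely: Khasminskii-type non-explosion for the continuous process, Itô applied to $V^p$ with Young's inequality at the conjugate exponents $q_p, q'_p$ so that the first Young piece is absorbed by the extra term in \eqref{eq: generator assmp}, and the splitting $E|X^n_t|^p \le E|X^n_{t\wedge\tau_n}|^p + (E|X^n_t|^{2p})^{1/2}P(\tau_n\le T)^{1/2}$ for the full-interval scheme moments. However there is a genuine gap. Your tail estimate invokes $E\sup_{t\le T}\max\{V(X^n_{t\wedge\tau_n})^k, |b(X^n_{t\wedge\tau_n})|^k\}$, an expectation of a running supremum, whereas the Itô-plus-Grönwall argument you describe only yields $\sup_t E[\,\cdot\,]$, not $E[\sup_t\cdot]$. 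The paper bridges exactly this gap with the Lenglart domination inequality (Proposition \ref{prop: Langlart ineq}), and this is why Lemmas \ref{lem: V bounds scheme stopped} and \ref{lem: scheme bounds stopped non-uniform} are stated with an \emph{additional} supremum over all stopping times $\tau$: the hypothesis of Lenglart requires exactly the uniform-in-$\tau$ bound $\sup_\tau\sup_t EV(X^n_{t\wedge\tau\wedge\tau_n})^p \le H_{p+q'_p}$. Your sketch neither proves the stopped bounds in this stronger uniform-in-stopping-time form nor invokes any maximal inequality, so the step ``the latter is controlled by the already-established stopped bounds'' does not go through as written. An alternative that sidesteps Lenglart is to apply Markov at the single random time $\tau_n\wedge T$: by path-continuity $\max\{V(X^n_{\tau_n\wedge T}), |b(X^n_{\tau_n\wedge T})|\}\ge n^{1/2}$ on $\{\tau_n\le T\}$, and one then only needs a moment bound at the one stopping time $\tau_n\wedge T$ (still requiring the Itô argument to be run up to a general stopping time).

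A smaller imprecision: once $V(X^n_{t\wedge\tau_n})$, $V(X^n_{\kappa_n(t)\wedge\tau_n})\le n^{1/2}$ and $|b(X^n_{\kappa_n(t)\wedge\tau_n})|\le n^{1/2}$ are used, the increment contribution $E|b_n(X^n_{\kappa_n(s)\wedge\tau_n})-b_n(X^n_{s\wedge\tau_n})|^{q'_p}$ is already of constant order $H_{q'_p}$, so no ``further Young step'' absorbing an $EV^{q'_p}$ term into $\varepsilon EV^p + C_\varepsilon$ is required; the exponent $p+q'_p$ arises simply by combining $V(x_0)^p\le H_p$ with the $H_{q'_p}$ contribution of the increment.
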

This result will be proven in the lemmas of the following section. In particular, the bound on $\hat{V}(X^n_{t\wedge \tau_n} )^p$ follows as an exact parallel of the bound on $V(X^n_{t\wedge \tau_n} )^p$.


\subsection{Continuous Process}
We begin with moments for the continuous time process, which are much simpler.
\subsubsection{V-Moments}
\begin{lemma}\label{lem: V bounds cont non-uniform}
    Let Assumption \ref{assmp: Lyapunov fn} hold. Then \eqref{eq: SDE main} has a unique strong solution, and for every $T>0$ and $p\geq 2$ one has
    \begin{equation}
\sup_{t\in [0,T]} EV(X_t)^p\leq H_p.
    \end{equation}
\end{lemma}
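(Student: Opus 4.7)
The plan is to use the Lyapunov inequality \eqref{eq: generator assmp} together with Itô's formula applied to $V^p(X_t)$ and a Grönwall argument. Since the estimate must be carried out on the possibly incomplete process before we even know $X_t$ exists globally, I will interleave the moment estimate with a localization that simultaneously yields existence, uniqueness and confinement to $D$.

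First I would note that by \eqref{eq: Lyapunov Lipschitz b} and continuity of $V$, the drift $b$ is locally Lipschitz on $D$, so the classical theory gives a unique strong solution up to an explosion time $\sigma_\infty$ taking values in $D \cup \{\partial D\}$ (via local $\varepsilon$-interior stopping times, using \eqref{eq: explosion on boundary} to identify ``exit from $D$'' with ``blow-up of $V$''). I would introduce the cutoff stopping times
\begin{equation*}
\sigma_R := \inf\{t \geq 0 : V(X_t) \geq R\}, \qquad R > V(x_0),
\end{equation*}
so that $\sigma_R \nearrow \sigma_\infty$. On $[0, \sigma_R]$ the process $X$ stays in the compact sublevel set $\{V \leq R\}\subset D$, where $\nabla V^p$ is bounded, so Itô's formula applied to $V^p(X_{t\wedge \sigma_R})$ and taking expectations kills the stochastic integral, giving
\begin{equation*}
E V^p(X_{t\wedge \sigma_R}) = V^p(x_0) + E\int_0^{t\wedge\sigma_R} \mathcal{L}(V^p)(X_s)\,ds.
\end{equation*}

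Dropping the nonpositive $-\tfrac{1}{q_p}|\nabla V^p|^{q_p}$ term in \eqref{eq: generator assmp} gives $\mathcal{L}(V^p) \leq a_p + b_p V^p$, so
\begin{equation*}
E V^p(X_{t\wedge \sigma_R}) \leq V^p(x_0) + a_p T + b_p \int_0^t E V^p(X_{s\wedge \sigma_R})\,ds.
\end{equation*}
Grönwall's lemma yields $\sup_{t \in [0,T]} E V^p(X_{t\wedge \sigma_R}) \leq (V(x_0)^p + a_p T) e^{b_p T}$, and this bound is of the form $H_p$ per Definition \ref{defn: generic constant}, in particular it is \emph{independent of $R$}. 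Letting $R \to \infty$ and applying Fatou's lemma on the event $\{\sigma_\infty > T\}$ gives $\sup_{t \in [0,T]} E[V(X_t)^p \mathbf{1}_{\sigma_\infty > t}] \leq H_p$.

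The main (mildly delicate) point is then using this uniform bound to rule out explosion. By Markov's inequality, $P(\sigma_R \leq T) \leq R^{-p} E V^p(X_{T\wedge \sigma_R}) \leq H_p R^{-p} \to 0$, so $\sigma_\infty = \lim_R \sigma_R = \infty$ almost surely. Combined with \eqref{eq: explosion on boundary}, this shows $X_t \in D$ for all $t > 0$ almost surely, so the strong solution is global. Uniqueness on $[0, \sigma_R]$ is standard from the local Lipschitz property (alternatively one can invoke the monotonicity assumption \eqref{eq: mon assump} directly), and patches into global pathwise uniqueness. The required bound $\sup_{t \in [0,T]} E V(X_t)^p \leq H_p$ now follows unconditionally. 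The only place care is needed is the combined localization-moment argument — one must fix the order of Itô, Grönwall and $R \to \infty$ so that the $\nabla V^p$ factor stays bounded (used to kill the martingale in expectation) while the Lyapunov-type constant on the right-hand side remains independent of $R$; the dissipation term $-\tfrac{1}{q_p}|\nabla V^p|^{q_p}$ plays no role here and is saved for the discrete-time analysis where discretization error must be absorbed.
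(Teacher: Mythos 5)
Your proof follows the same Itô--Grönwall--Markov--Fatou scaffolding as the paper's, and the use of the Lyapunov inequality \eqref{eq: generator assmp} (dropping the dissipative $|\nabla V^p|^{q_p}$ term) is identical. Two small points worth flagging. First, the sublevel set $\{V\le R\}$ need not be compact --- nothing in Assumption~\ref{assmp: Lyapunov fn} forces $V\to\infty$ as $|x|\to\infty$ --- so the boundedness of the martingale integrand $V^{p-1}\nabla V$ on $[0,\sigma_R]$ should be deduced not from compactness but from the bound $V\le R$ together with \eqref{eq: nabla V power bound}, which is exactly the justification the paper gives. Second, the paper sets up the localization slightly differently: rather than invoking local Lipschitz continuity on $D$ to obtain a maximal local solution and then identifying exit-from-$D$ with $V$-blow-up, it introduces a globally well-posed truncated SDE $X^R$ with bounded drift (via Theorem~1.1 of \cite{ZHANG20051805}) and stops it at $\tilde\tau_R$; this avoids having to argue \emph{a priori} that $\sigma_R\nearrow\sigma_\infty$, which in your formulation tacitly excludes the possibility of escape to infinity with $V$ remaining bounded. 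Both set-ups lead to the same estimate, but the paper's is cleaner on that point.
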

\begin{proof}
  Firstly, let us define
  \begin{equation}
      dX^R_t=b(X^R_t)1_{\{\lvert b(X^R_t)\rvert \leq R,\;\lvert V(X^R_t)\rvert \leq R\}
      }dt+\beta dW_t.
  \end{equation}
  Note, since the drift is bounded, that this SDE has a unique strong solution following Theorem 1.1 in \cite{ZHANG20051805}. Now let us define the stopping time
   \begin{equation}\label{eq: result of V moment bound cont}
       \tilde{\tau}_R:=\inf\{t\geq 0\vert \; V(X^R_t)\geq R\}.
   \end{equation}
Applying the Itô's formula in Proposition \ref{prop: local Ito} for $t\in [0,T]$, one obtains that
  \begin{align}\label{eq: Ito's formula V}
   V(X^R_{t\wedge \tilde{\tau}_R})^p&=V(x_0)+\int^{t\wedge \tilde{\tau}_R} _0 \mathcal{L}(V^p)(X^R_s) ds\nonumber \\
   &+\int^{t\wedge \tilde{\tau}_R} _0 p\beta V^{p-1}(X^R_s)\langle \nabla V(X^R_s),dW_s\rangle.
  \end{align}  
  Note that $V(X^R_{s\wedge \tilde{\tau}_R})$ and $\nabla V(X^R_{t\wedge \tilde{\tau}_R})$ are almost surely bounded, by the definition of stopping time given above, and by \eqref{eq: nabla V power bound}. Therefore, the stochastic integral has bounded integrand, and therefore is a martingale, and so applying expectation and \eqref{eq: generator assmp}, one has
    \begin{align}
        E V(X^R_{t\wedge \tilde{\tau}_R})^p&\leq V(x_0)^p+E\int^{t\wedge \tilde{\tau}_R}_0 (a_p+b_pV(X^R_s)^p)ds.
    \end{align}
   Then since $V\geq 0$ it follows
        \begin{align}\label{eq: V power p cont}
        E V(X^R_{t\wedge \tilde{\tau}_R})^p\leq V(x_0)^p+a_pt+b_p\int^t_0 EV(X^R_{s\wedge \tilde{\tau}_R})^p ds.
    \end{align}
    Applying the almost sure boundedness of the integrand again, and additionally Grönwall's inequality, one obtains
     \begin{align}\label{eq: bound with stopping time}
        EV(X^R_{t\wedge \tilde{\tau}_R})^p&\leq (V(x_0)^p+a_pt)e^{b_p t}.
    \end{align}
Observe for $R\geq V(x_0)$ one has
  \begin{equation}
      R^pP(\tilde{\tau}_R\in [0,T])\leq  EV(X^R_{T\wedge \tilde{\tau}_R})^p,
  \end{equation}
 so that $P(\tilde{\tau}_R\in [0,T])\to 0$ as $R\to \infty$. Furthermore, since $\tilde{\tau}_R$ is increasing in $R>0$, one has that $\lim_{R\to\infty}\tilde{\tau}_R>T$ almost surely. Therefore, since $X^R_t$ is a unique strong solution of \eqref{eq: SDE main} up to $\tilde{\tau}_R$, the process $(X_t)_{t\geq 0}$ given as $X_t:=\lim_{R\to\infty}X^R_{t\wedge \tilde{\tau}_R}$ is well defined for all $t\geq 0$, and yields a unique strong solution of \eqref{eq: SDE main} on $[0,T]$. We may therefore extend this to a solution on $[0,\infty)$ by stitching together solutions on compact intervals. Taking the limit as $R\to\infty$ in \eqref{eq: bound with stopping time} and applying Fatou's lemma, the stated result follows form Definition \ref{defn: generic constant}.
\end{proof}

\begin{lemma}\label{lem: V hat bounds cont uniform}
    Let Assumption \ref{assmp: Lyapunov fn} and Assumption \ref{assmp: Lyapunov controls nabla b} hold. Then for every $T>0$ and $p\geq 2$ one has
    \begin{equation}
\sup_{t\in [0,T]} E\hat{V}(X_t)^p\leq \hat{H}_p.
    \end{equation}
\end{lemma}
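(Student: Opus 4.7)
The plan is to mirror the proof of Lemma \ref{lem: V bounds cont non-uniform}, replacing $V$ everywhere by $\hat{V}$. The key observation enabling this is that Assumption \ref{assmp: Lyapunov controls nabla b} explicitly requires $\hat{V}$ to satisfy all conditions of Assumption \ref{assmp: Lyapunov fn}, simply with new constants $\hat{a}_p, \hat{b}_p, \hat{q}_p$. So the generator bound \eqref{eq: generator assmp}, the boundary explosion \eqref{eq: explosion on boundary}, and the gradient bound \eqref{eq: nabla V power bound} are all available for $\hat{V}$.

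A minor simplification compared to the previous lemma is that we no longer need to build a solution by truncating the drift: Lemma \ref{lem: V bounds cont non-uniform} already delivers the unique strong solution $X_t$ of \eqref{eq: SDE main} with $X_t \in D$ a.s. for every $t \geq 0$. So I would start by introducing only the single localizing stopping time
\[
\hat{\tau}_R := \inf\{t \geq 0 \,:\, \hat{V}(X_t) \geq R\}.
\]
Continuity of paths together with the boundary-explosion property \eqref{eq: explosion on boundary} applied to $\hat{V}$, and the fact that $X_t \in D$ almost surely, ensure that $\hat{\tau}_R \to \infty$ almost surely as $R \to \infty$.

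Next, I would apply Itô's formula to $\hat{V}(X_{t \wedge \hat{\tau}_R})^p$, exactly as in \eqref{eq: Ito's formula V}. On the interval $[0, t \wedge \hat{\tau}_R]$, the quantities $\hat{V}(X_s)$ and $\nabla \hat{V}(X_s)$ are bounded by $R$ and $C(1+R^l)$ respectively (by \eqref{eq: nabla V power bound} applied to $\hat{V}$), so the stochastic integral is a true martingale. Taking expectation and dropping the nonnegative term $\frac{1}{\hat{q}_p}|\nabla(\hat{V}^p)|^{\hat{q}_p}$ from the generator bound for $\hat{V}$ yields
\[
E\hat{V}(X_{t\wedge \hat{\tau}_R})^p \leq \hat{V}(x_0)^p + \hat{a}_p T + \hat{b}_p \int_0^t E\hat{V}(X_{s \wedge \hat{\tau}_R})^p\, ds.
\]
Grönwall's inequality then gives $E\hat{V}(X_{t \wedge \hat{\tau}_R})^p \leq (\hat{V}(x_0)^p + \hat{a}_p T)e^{\hat{b}_p T}$, which is bounded by $\hat{H}_p$ via Definition \ref{defn: generic constant 2}. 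Sending $R \to \infty$ and applying Fatou's lemma then gives the claimed bound uniformly on $[0,T]$.

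There is no real obstacle here: the argument is a direct transcription of the previous lemma's calculation, and the only point requiring a sentence of justification is that the localizing sequence $\hat{\tau}_R$ actually exhausts $[0,\infty)$, which is ensured by the boundary-explosion condition inherited by $\hat{V}$ from Assumption \ref{assmp: Lyapunov fn}. The constants track through Definition \ref{defn: generic constant 2} in the same way that the original constants did through Definition \ref{defn: generic constant}, producing the stated $\hat{H}_p$ bound.
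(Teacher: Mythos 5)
Your proof is correct and matches the paper's approach, which simply repeats the argument of Lemma~\ref{lem: V bounds cont non-uniform} with $\hat{V}$, $\hat{a}_p$, $\hat{b}_p$, $\hat{q}_p$ in place of $V$, $a_p$, $b_p$, $q_p$. Your slight streamlining—reusing the solution already furnished by Lemma~\ref{lem: V bounds cont non-uniform} and localizing directly via $\hat{\tau}_R$ rather than re-running the truncated-drift construction—is a valid and harmless shortcut, since existence, $X_t \in D$ a.s., and path continuity are already in hand, and $\hat{V} \in C^2(D)$ together with the inherited gradient bound and Proposition~\ref{prop: local Ito} justify the Itô step exactly as before.
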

\begin{proof}
    Follows exactly as the previous two lemmas, using Assumption \ref{assmp: Lyapunov controls nabla b}.
\end{proof}
\subsubsection{Regular Moments}
\begin{lemma}\label{lem: mom bounds cont non-uniform}
    Let Assumption \ref{assmp: Lyapunov fn} hold. Then for all $T>0$ and $p\geq 2 $ 
    \begin{equation}
       \sup_{0\leq t\leq T}  E\lvert X_t \rvert ^p\leq H_{2p}.
    \end{equation}
\end{lemma}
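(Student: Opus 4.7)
The plan is to follow the same localization template as in Lemma \ref{lem: V bounds cont non-uniform}: work with the truncated process $X^R_t$ and stopping time $\tilde\tau_R$ from \eqref{eq: result of V moment bound cont}, apply It\^o's formula now to the test function $x \mapsto |x|^p$, control the drift using the monotonicity-based inequality \eqref{eq: b bound using mon}, plug in the V-moment bound already established in the proof of Lemma \ref{lem: V bounds cont non-uniform}, close the resulting integral inequality by Gr\"onwall on $[0,T]$, and finally send $R \to \infty$ via Fatou.

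Concretely, since $\nabla(|x|^p) = p|x|^{p-2}x$ and $\Delta(|x|^p) = p(p+d-2)|x|^{p-2}$, It\^o's formula gives
\begin{align*}
|X^R_{t\wedge\tilde\tau_R}|^p &= |x_0|^p + \int_0^{t\wedge\tilde\tau_R}\Big(p|X^R_s|^{p-2}\langle X^R_s, b(X^R_s)\rangle + \tfrac{\beta^2}{2}p(p+d-2)|X^R_s|^{p-2}\Big)\,ds \\
&\quad + \int_0^{t\wedge\tilde\tau_R} p\beta |X^R_s|^{p-2}\langle X^R_s, dW_s\rangle.
\end{align*}
By the stopping criterion $V(X^R_s)\leq R$ on $[0, \tilde\tau_R]$, together with \eqref{eq: upper bound for b}, the drift and diffusion integrands are a.s.\ bounded, so the stochastic integral is a mean-zero martingale. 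Substituting \eqref{eq: b bound using mon} yields $p|x|^{p-2}\langle x, b(x)\rangle \leq cp|x|^p + pH_1|x|^{p-2}(1+|x|)(1+V(x)) + pH_2|x|^{p-2}$, and Young's inequality on the cross term $|x|^{p-1}V(x)$ with conjugate exponents $p/(p-1)$ and $p$ (together with analogous estimates on $|x|^{p-2}$ from the Laplacian) bounds the entire drift integrand pointwise by $c(1 + |x|^p + V(x)^p)$, where the generic constant $c$ absorbs $p$-dependent factors and powers of $H_1, H_2, \beta$ and $d$.

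Taking expectations and inserting the V-moment bound $EV(X^R_{s\wedge\tilde\tau_R})^p \leq H_p$ established en route to Lemma \ref{lem: V bounds cont non-uniform}, one arrives at
\begin{equation*}
E|X^R_{t\wedge\tilde\tau_R}|^p \leq C_1 + c\int_0^t E|X^R_{s\wedge\tilde\tau_R}|^p \, ds,
\end{equation*}
where $C_1$ is at most of order $H_{2p}$ by the scaling relations \eqref{eq: constant condition} (combining $|x_0|^p$, $\beta^p d^{p/2}$, and $T H_1 H_p \leq H_{p+1} \leq H_{2p}$). Gr\"onwall's inequality then gives $E|X^R_{t\wedge\tilde\tau_R}|^p \leq H_{2p}$ uniformly in $t \in [0,T]$, and Fatou's lemma as $R \to \infty$ finishes the argument.

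The main (and relatively mild) obstacle is tracking exponents carefully: the cross term $|X_s|^{p-1}V(X_s)$ must be Young-split into pieces that respectively feed back into the closing $E|X_s|^p$ term and into the available V-moment bound of order exactly $p$, rather than forcing a higher-order V-moment that is not yet available at this stage of the proof. The stated $H_{2p}$ is a conservative choice for clean constant bookkeeping; a finer accounting would yield roughly $H_{p+1}$.
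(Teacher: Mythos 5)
Your overall strategy mirrors the paper's (It\^o on $|\cdot|^p$, bound the drift via \eqref{eq: b bound using mon}, feed in the $V$-moment bound from Lemma~\ref{lem: V bounds cont non-uniform}, Gr\"onwall, Fatou), and the exponent bookkeeping landing at $H_{2p}$ agrees. However, there is one genuine gap in your martingale justification. You propose to work with the truncated process $X^R_t$ and reuse the stopping time $\tilde\tau_R$ from Lemma~\ref{lem: V bounds cont non-uniform}, and then claim that ``the drift and diffusion integrands are a.s.\ bounded'' on $[0,\tilde\tau_R]$. This is not true: $\tilde\tau_R$ caps $V(X^R_s)$ at $R$, but the stochastic integrand when differentiating $|\cdot|^p$ is $p\beta|X^R_s|^{p-2}X^R_s$, and nothing in the definition of $\tilde\tau_R$ (or in \eqref{eq: upper bound for b}) controls $|X^R_s|$ pathwise. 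The paper sidesteps this precisely by introducing a \emph{different} localizing stopping time for this lemma, namely $\bar\tau_R := \inf\{t\geq 0 : |X_t|\geq R\}$, applied to the solution $X_t$ already constructed in Lemma~\ref{lem: V bounds cont non-uniform}; on $[0,\bar\tau_R]$ the integrand $|X_s|^{p-1}$ genuinely is bounded by $R^{p-1}$. Your argument can be salvaged without switching stopping times --- since $X^R_t$ has a.s.\ bounded drift by construction, $|X^R_t|$ has Gaussian tails and hence all moments on $[0,T]$, which makes the local martingale a true martingale --- but the justification as written is incorrect, and the cleaner route is to follow the paper and localize by $|X_t|$ rather than by $V$.

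One further small imprecision: when passing from $E\int_0^{t\wedge\tilde\tau_R}V(X^R_s)^p\,ds$ to a deterministic bound you should use that $V\geq 0$ to extend the upper limit of integration to $t$ and then invoke $\sup_{s\leq T}EV(X^R_{s\wedge\tilde\tau_R})^p\leq H_p$ (or, in the paper's formulation, $\sup_{s\leq T}EV(X_s)^p\leq H_p$). Writing $EV(X^R_{s\wedge\tilde\tau_R})^p$ directly under the integral only makes sense once the random upper limit has been removed; your sketch elides this step but the exponent outcome is unaffected.
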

\begin{proof}
        As before, we introduce a stopping time to assume apriori that all stochastic integrals vanish under expectation. Specifically, let
    \begin{equation}
        \bar{\tau}_R:=\inf\{t\geq 0 \vert \; \lvert X_t\rvert \geq R \}.
    \end{equation}
    We fix $p\geq 2$ as assumed in the Lemma statement. Then one calculates
    \begin{equation}\label{eq: calc for second derivative}
       tr(\nabla^2\lvert \cdot\rvert^p)(x)=p(p+d-2)\lvert x\rvert^{p-2}\leq cd\lvert x\rvert^{p-2}, 
    \end{equation}
        where $c>0$ is the generic constant introduced in Definition \ref{defn: generic constant}, which in particular depends on $p>0$. Therefore, for $t\in [0,T]$ one may apply Itô's formula to yield
        \begin{align}\label{eq: Ito's formula cont}
  E\lvert X_{t \wedge \bar{\tau}_R} \rvert^p &\leq \lvert x_0\rvert^p+cE\int^{t \wedge \bar{\tau}_R}_0 (\lvert X_s\rvert^{p-2}\langle X_s, b(X_s)\rangle+\beta^2 d\lvert X_s\rvert^{p-2})ds,
    \end{align}
    Now we use \eqref{eq: b bound using mon} to obtain that
    \begin{equation}\label{eq: inner product calculation}
        \lvert x \rvert^{p-2}\langle b(x),x \rangle\leq c(1+\lvert x \rvert^p)+H_p(1+V(x)^p)+H_p,
    \end{equation}
    so that by Young's inequality, and since $\beta^pd^{p/2}\leq H_p$ and $\lvert x_0\rvert^p\leq H_p$ by Definition \ref{defn: generic constant}, one has
    \begin{align}
  E\lvert X_{t \wedge \bar{\tau}_R} \rvert^p &\leq \lvert x_0\rvert^p+cE\int^{t \wedge \bar{\tau}_R}_0 (\lvert X_s\rvert^p+H_p(1+V( X_s)^p)+H_p)ds\nonumber \\
  &\leq c\int^{t}_0 (E\lvert X_{s \wedge \bar{\tau}_R}\rvert^p+H_p(1+EV( X_{s \wedge \bar{\tau}_R})^p)+H_p)ds\nonumber \\
  &\leq c\int^{t}_0 (E\lvert X_{s \wedge \bar{\tau}_R}\rvert^p+H_{2p})ds,
    \end{align}
    where we have used the properties of the generic constant given in \eqref{eq: constant condition}. Therefore, by Grönwall's lemma and Fatou's lemma the result follows.
\end{proof}

\subsection{Tamed Euler Scheme}
We begin by proving `bad' moment bounds that scale with the step-size parameter $n\geq 1$.
\begin{lemma}\label{lem: bad mom bounds scheme}
   Let Assumption \ref{assmp: Lyapunov fn} hold. Then for all $T>0$ and $p\geq 2$ there exists $c>0$ such that
    \begin{equation}
      \sup_{t\in [0,T]}E\lvert X^n_t\rvert^p\leq H_pn^{p/2}.
    \end{equation}
\end{lemma}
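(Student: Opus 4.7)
The plan is to exploit the fact that the truncated drift $b_n$ is, by the very definition in \eqref{eq: truncated coef}, uniformly bounded in space: the indicator $1_{\lvert b(x)\rvert \leq n^{1/2}}$ forces $\lvert b_n(x)\rvert \leq n^{1/2}$ for every $x \in \mathbb{R}^d$. Consequently the scheme, written in integral form via \eqref{eq: scheme defn}, satisfies
\begin{equation*}
X^n_t = x_0 + \int_0^t b_n(X^n_{\kappa_n(s)})\, ds + \beta W_t,
\end{equation*}
with a drift integrand bounded by $n^{1/2}$ almost surely. No moment control on $V(X^n_t)$ is needed at this stage; the taming alone suffices.

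First I would apply the triangle inequality and Jensen (or a standard $(a+b+c)^p \leq 3^{p-1}(a^p+b^p+c^p)$ estimate) to split
\begin{equation*}
\lvert X^n_t\rvert^p \leq c\bigl(\lvert x_0\rvert^p + t^p \sup_{s\leq t}\lvert b_n(X^n_{\kappa_n(s)})\rvert^p + \beta^p \lvert W_t\rvert^p\bigr).
\end{equation*}
Taking expectation, bounding the middle term by $T^p n^{p/2}$ from the taming, and using the Gaussian moment bound $E\lvert W_t\rvert^p \leq c(dt)^{p/2} \leq c d^{p/2} T^{p/2}$, I obtain
\begin{equation*}
\sup_{t\in[0,T]} E\lvert X^n_t\rvert^p \leq c\bigl(\lvert x_0\rvert^p + T^p n^{p/2} + \beta^p d^{p/2} T^{p/2}\bigr).
\end{equation*}

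Finally I would absorb the $x_0$ and $\beta^p d^{p/2}$ contributions into $H_p$ using Definition \ref{defn: generic constant} and \eqref{eq: H_p defn}, noting $T$ is also encoded in the generic constant $c$. Since $n \geq 1$ implies $1 \leq n^{p/2}$, the three terms combine into a single bound of the form $H_p n^{p/2}$, which is the claim. There is no genuine obstacle here: the lemma is meant as a crude \emph{a priori} estimate that will later be refined (using the Lyapunov structure and the stopping time $\tau_n$) into the $n$-independent moment bounds of Proposition \ref{prop: mom bounds}. The only minor care needed is to make sure the cosmetic factors of $T$ are carried into the generic constant $c$ rather than being kept explicit.
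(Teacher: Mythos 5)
Your proof is correct, and it takes a genuinely more elementary route than the paper's. The paper opts for the machinery that it uses throughout the moment-bound section: introduce a localising stopping time $\tilde{\tau}_n^R$ so that the stochastic integral in Itô's formula for $\lvert X^n_t\rvert^p$ is a true martingale, bound the drift contribution via $\langle X^n_s, b_n(X^n_{\kappa_n(s)})\rangle\leq n^{1/2}\lvert X^n_s\rvert$ and Young's inequality, and close the loop with Grönwall's lemma followed by Fatou. You instead integrate the scheme's defining equation directly, observe that the drift integrand is \emph{pointwise} bounded by $n^{1/2}$ thanks to the indicator $1_{\lvert b(x)\rvert\leq n^{1/2}}$ in \eqref{eq: truncated coef}, and conclude via the triangle inequality, the elementary estimate $(a+b+c)^p\leq 3^{p-1}(a^p+b^p+c^p)$, and the Gaussian moment bound for $\beta W_t$. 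Your version avoids Itô's formula, the localisation, and Grönwall entirely, and makes it transparent that the estimate is nothing more than a crude a priori bound resting solely on the taming; the paper's version has the modest advantage of fitting the template it reuses for the harder bounds (Lemmas \ref{lem: V bounds scheme stopped} and \ref{lem: scheme bounds stopped non-uniform}), where a direct integral estimate would no longer suffice. Both conclude identically once the $\lvert x_0\rvert^p$, $\beta^p d^{p/2}$, and $T$-dependent factors are absorbed into $H_p$ via Definition \ref{defn: generic constant}, using $n\geq 1$ so that the constant terms pick up the harmless factor $n^{p/2}$.
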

\begin{proof}
    Similarly to in Lemma \ref{lem: mom bounds cont non-uniform}, let us consider the following stopping time
    \begin{equation}
     \tilde{\tau}_n^R:=\inf\{t\geq 0\vert \; \lvert X^n_t\rvert\geq R\}. 
    \end{equation} 
    Let $p\geq 2$ as in the Lemma statement. By Itô's formula, the definition of $b_n$ in \eqref{eq: truncated coef}, and since by this and the stopping time definition the stochastic integral is a martingale, one obtains for $t\in [0,T]$ that
    \begin{align}\label{eq: Ito for stopped}
  &E\lvert X^n_{t \wedge \tilde{\tau}_n^R} \rvert^{p}\leq \lvert x_0\rvert^p+E\int^{t  \wedge \tilde{\tau}_n^R}_0 (p\lvert X^n_s\rvert^{p-2}\langle X^n_s, b_n(X^n_{\kappa_n(s)}\rangle +c\beta^2 d\lvert X^n_s \rvert^{p-2})ds \nonumber \\
  &\leq \lvert x_0\rvert^p+cE\int^{t  \wedge \tilde{\tau}_n^R}_0 (n^{p/2} +\lvert X^n_s \rvert^p+ \beta^pd^{p/2} )ds.
    \end{align}
   Then applying Grönwall's lemma and Fatou's lemma, and since $\beta^pd^{p/2}\leq H_p$, the result follows.
    \end{proof}

\begin{lemma}\label{lem: V bounds scheme stopped}
    Let Assumption \ref{assmp: Lyapunov fn} hold.  Then for every $T>0$ and $p\geq 2$ one has
    \begin{equation}
       \sup_\tau \sup_{t\in [0,T]}EV(X^n_{t\wedge \tau\wedge \tau_n} )^p\leq H_{p+q_p'},
    \end{equation}
    where the first supremum is over all stopping times $\tau$ and $\tau_n$ is given in \eqref{eq: tau_n defn}.
\end{lemma}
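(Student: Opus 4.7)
The plan is to apply It\^o's formula to $V(X^n_t)^p$ at the stopped time $t\wedge\tau\wedge\tau_n$, then set up the discretization error so that it can be absorbed using the extra $\tfrac{1}{q_p}|\nabla V^p|^{q_p}$ term built into the generator bound \eqref{eq: generator assmp}. Up to $\tau_n$ we have $V(X^n_s)\le n^{1/2}$, so $X^n_s\in D$ by \eqref{eq: explosion on boundary}, and $b_n(X^n_{\kappa_n(s)})=b(X^n_{\kappa_n(s)})$ (the grid point is also before $\tau_n$). Since $V$ and $\nabla V$ (using \eqref{eq: nabla V power bound}) are a.s. bounded on $[0,\tau_n]$, the stochastic integral produced by It\^o is a true martingale and vanishes in expectation.

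After taking expectation, I would rewrite the drift integrand, by adding and subtracting $b(X^n_s)$, as
\[
\mathcal{L}(V^p)(X^n_s)+\bigl\langle \nabla V^p(X^n_s),\,b(X^n_{\kappa_n(s)})-b(X^n_s)\bigr\rangle,
\]
then apply Young's inequality with conjugate exponents $q_p,q_p'$ to the cross term and use \eqref{eq: generator assmp} on the first term. The $\tfrac{1}{q_p}|\nabla V^p|^{q_p}$ contributions cancel exactly, leaving the integrand bounded by
\[
a_p+b_pV(X^n_s)^p+\tfrac{1}{q_p'}\bigl|b(X^n_{\kappa_n(s)})-b(X^n_s)\bigr|^{q_p'}.
\]
This is the whole point of the $|\nabla V^p|^{q_p}$ term in the assumption: it produces the conjugate exponent $q_p'$ that ends up in the bound.

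It then suffices to bound $E|b(X^n_{\kappa_n(s)})-b(X^n_s)|^{q_p'}$ uniformly in $n$ and $s$, on $\{s\le\tau_n\}$. The Lyapunov--Lipschitz condition \eqref{eq: Lyapunov Lipschitz b}, together with $V(X^n_{\kappa_n(s)}),V(X^n_s)\le n^{1/2}$, yields
\[
|b(X^n_{\kappa_n(s)})-b(X^n_s)|\le 2n^{1/2}|X^n_{\kappa_n(s)}-X^n_s|,
\]
while the increment itself satisfies $|X^n_{\kappa_n(s)}-X^n_s|\le |b_n(X^n_{\kappa_n(s)})|(s-\kappa_n(s))+\beta|W_s-W_{\kappa_n(s)}|\le n^{-1/2}+\beta|W_s-W_{\kappa_n(s)}|$, where the first piece uses the universal truncation $|b_n|\le n^{1/2}$. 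Raising to the $q_p'$-th power and taking expectation, the Gaussian increment contributes a term of order $n^{-q_p'/2}\beta^{q_p'}d^{q_p'/2}$, so the factor $n^{q_p'/2}$ coming from $(2n^{1/2})^{q_p'}$ cancels it exactly, leaving a bound of order $H_{q_p'}$.

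Plugging this into the stopped expectation and applying Gr\"onwall's lemma gives
\[
EV(X^n_{t\wedge\tau\wedge\tau_n})^p\le c\bigl(V(x_0)^p+a_p T+H_{q_p'}T\bigr)e^{b_pT},
\]
and \eqref{eq: constant condition} together with $e^{b_pT}\le c^{-1}H_p$ packages this as $H_{p+q_p'}$. The bound is uniform in $\tau$ because the right-hand side is independent of $\tau$. The main obstacle in my view is not a single hard step but verifying that the truncation level $n^{1/2}$ and the step-size $n^{-1}$ produce a cancellation at exactly the exponent $q_p'$; the proof closes only because the strength of the Lyapunov function in \eqref{eq: generator assmp} is calibrated precisely to the growth of $b$ expressed in \eqref{eq: Lyapunov Lipschitz b}, and any looser taming would leave an uncontrolled power of $n$.
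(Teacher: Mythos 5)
Your proof is correct and follows essentially the same approach as the paper: the same Itô decomposition with $b(X^n_{\kappa_n(s)})-b(X^n_s)$ as the cross term, the same Young's inequality split with conjugates $q_p,q_p'$ that makes \eqref{eq: generator assmp} absorb the $\tfrac{1}{q_p}|\nabla V^p|^{q_p}$ term, and the same increment estimate where $V\le n^{1/2}$, $|b_n|\le n^{1/2}$ and the step-size $n^{-1}$ conspire with $E|W_s-W_{\kappa_n(s)}|^{q_p'}\sim n^{-q_p'/2}$ to cancel all powers of $n$. One small wording quibble: the $\tfrac{1}{q_p}|\nabla V^p|^{q_p}$ contribution does not literally cancel against anything; it is absorbed by the generator inequality \eqref{eq: generator assmp}, which is what you evidently mean.
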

\begin{proof}
    Let $\tau$ be an arbitrary stopping time. We begin by proving an increment bound up to $\tau\wedge \tau_n$. Particularly, since one has 
    \begin{equation}
    b_n(X^n_{t\wedge \tau_n})=b(X^n_{t\wedge \tau_n}),\;\;\; \lvert b_n(X^n_{t\wedge \tau_n}) \rvert\leq n^{1/2},\;\;\;V(X^n_{t\wedge \tau_n})\leq n^{1/2},
    \end{equation}
    by the definition of $\tau_n$ and $b_n$, one first applies \eqref{eq: Lyapunov Lipschitz b} to obtain
    \begin{align}
       & \lvert b_n(X^n_{t\wedge \tau\wedge \tau_n})-b_n(X^n_{\kappa_n(t)\wedge\tau\wedge \tau_n})\rvert = \lvert b(X^n_{t\wedge \tau\wedge \tau_n})-b(X^n_{\kappa_n(t)\wedge\tau\wedge \tau_n})\rvert \nonumber \\
       & \leq (V(X^n_{t\wedge \tau\wedge \tau_n})+V(X^n_{\kappa_n(t)\wedge\tau\wedge \tau_n}))[n^{-1}b(X^n_{\kappa_n(t)\wedge\tau\wedge \tau_n})+\beta \lvert W_{t\wedge \tau\wedge \tau_n}-W_{\kappa_n(t)\wedge\tau\wedge \tau_n}\rvert]\nonumber\\
        &\leq c(1+\beta \sup_{u\in [\kappa_n(t),t]}\lvert W_u-W_{\kappa_n(t)}\rvert]).
    \end{align}
    Therefore, applying standard facts about Brownian motion, and since $\beta^pd^{p/2}\leq H_p$ one has
      \begin{align}\label{eq: increment bound}
       & E\lvert b_n(X^n_{t\wedge \tau\wedge \tau_n})-b_n(X^n_{\kappa_n(t)\wedge\tau\wedge \tau_n})\rvert^p\leq H_{p}.
    \end{align}
    Now let $p\geq 2$. Note that by the definition of $\tau_n$ and \eqref{eq: nabla V power bound}, the stochastic integral is a true  martingale and so vanishes under expectation by the optional stopping theorem. Therefore, applying Itô's formula and correcting the discretisation via Young's inequality, one has for $t\in [0,T]$ that
    \begin{align}\label{eq: scheme V Ito}
        &EV(X^n_{t\wedge \tau\wedge \tau_n})^p=V(x_0)^p+E\int^{t\wedge \tau \wedge \tau_n}_0 (\mathcal{L}(V^p))(X^n_s)ds\nonumber \\
        &+E\int^{t\wedge \tau \wedge \tau_n}_0\langle \nabla (V^p)(X^n_s), b_n(X^n_{\kappa_n(s)})-b_n(X^n_s)\rangle  ds\nonumber \\
        &\leq V(x_0)^p+E\int^{t\wedge \tau \wedge\tau_n}_0  ((\mathcal{L}(V^p))(X^n_s)+\frac{1}{q_p}\lvert \nabla (V^p)(X^n_s)\rvert^{q_p})ds\nonumber \\
        &+E\int^{t\wedge \tau \wedge\tau_n}_0 \frac{1}{q_p'}\lvert b_n(X^n_{\kappa_n(s)})-b_n(X^n_s)\rvert^{q_p'} ds.
    \end{align}
    Note that $1/q_p'$ is necessarily bounded by a universal constant, so that one obtains via \eqref{eq: generator assmp}
    \begin{align}
        EV(X^n_{t\wedge \tau\wedge \tau_n})^p& \leq V(x_0)^p+E\int^{t\wedge \tau \wedge\tau_n}_0  a_p +b_pV(X^n_s)^pds\nonumber \\
        &\quad +\int^t_0 cE\lvert b_n(X^n_{\kappa_n(s)\wedge \tau \wedge\tau_n})-b_n(X^n_{s\wedge \tau \wedge\tau_n})\rvert^{q_p'} ds\nonumber \\
        &\leq V(x_0)^p+H_{q_p'}+E\int^{t\wedge \tau \wedge\tau_n}_0  (a_p +b_pV(X^n_s)^p)ds,
    \end{align}
    at which point the result follows from Grönwall's lemma. The dependence on the generic constant follows since $V(x_0)^p\leq H_p$ and since by \eqref{eq: constant condition} one has 
    \begin{equation}
        H_p+H_{q_p'}\leq (H_p+1)(H_{q_p'}+1)\leq H_pH_{q_p'}\leq H_{p+q_p'}.
    \end{equation}
\end{proof}

Now we may prove uniform moments bounds of $X^n_{t\wedge \tau_n}$ in a similar fashion.
\begin{lemma}\label{lem: scheme bounds stopped non-uniform}
      Let Assumption \ref{assmp: Lyapunov fn} hold.  Then for every $T>0$ and $p\geq 2$ one has
    \begin{equation}
    \sup_{\tau}\sup_{t \in [0,T]}E\lvert X^n_{t\wedge \tau\wedge \tau_n}\rvert^p\leq H_{2p+q_p'},
    \end{equation}
    where the first supremum is taken over all stopping times.
\end{lemma}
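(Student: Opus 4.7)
The plan is to mirror the structure of Lemma \ref{lem: mom bounds cont non-uniform}, but with two extra ingredients to handle the discretisation: the $V$-moment bound from Lemma \ref{lem: V bounds scheme stopped}, and the increment estimate \eqref{eq: increment bound}. First I would introduce an auxiliary localising stopping time $\bar{\tau}_R^n := \inf\{t\geq 0: |X^n_t|\geq R\}$, so that $E|X^n_{t\wedge \bar{\tau}_R^n}|^p$ is bounded (this is why the crude bound of Lemma \ref{lem: bad mom bounds scheme} is needed — to guarantee $\bar{\tau}_R^n \to \infty$ a.s. as $R\to\infty$), and to ensure that the stochastic integral appearing below is a true martingale.

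Next I would apply It\^o's formula to $|X^n_t|^p$ between $0$ and $t\wedge \tau \wedge \tau_n \wedge \bar{\tau}_R^n$, exactly as in \eqref{eq: Ito for stopped} but keeping the drift term intact. Since $b_n(X^n_s) = b(X^n_s)$ for $s\leq \tau_n$, I split
\begin{equation*}
p|X^n_s|^{p-2}\langle X^n_s, b_n(X^n_{\kappa_n(s)})\rangle = p|X^n_s|^{p-2}\langle X^n_s, b(X^n_s)\rangle + p|X^n_s|^{p-2}\langle X^n_s, b_n(X^n_{\kappa_n(s)}) - b_n(X^n_s)\rangle.
\end{equation*}
The first piece is controlled by \eqref{eq: b bound using mon}: multiplying through by $|X^n_s|^{p-2}$ and applying Young's inequality (as in \eqref{eq: inner product calculation}) produces an upper bound of the form $c(1+|X^n_s|^p) + H_p V(X^n_s)^p$. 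The second piece is handled by Cauchy--Schwarz and Young's inequality, giving $c|X^n_s|^p + c|b_n(X^n_{\kappa_n(s)}) - b_n(X^n_s)|^{p}$, the latter quantity having expectation bounded by $H_p$ after stopping at $\tau \wedge \tau_n$ thanks to \eqref{eq: increment bound}. Finally, the second-derivative term from It\^o yields a $\beta^2 d |X^n_s|^{p-2} \leq c(\beta^p d^{p/2} + |X^n_s|^p)$ contribution bounded by $H_p$.

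Combining everything and using Lemma \ref{lem: V bounds scheme stopped} to bound $EV(X^n_{s\wedge \tau \wedge \tau_n})^p \leq H_{p+q_p'}$, and the product property $H_p H_{p+q_p'} \leq H_{2p+q_p'}$ from \eqref{eq: constant condition}, I obtain an inequality of the shape
\begin{equation*}
E|X^n_{t\wedge \tau \wedge \tau_n \wedge \bar{\tau}_R^n}|^p \leq H_{2p+q_p'} + c\int_0^t E|X^n_{s\wedge \tau \wedge \tau_n \wedge \bar{\tau}_R^n}|^p \, ds.
\end{equation*}
Grönwall's lemma then yields the bound uniformly in $R$ and $\tau$, and letting $R\to\infty$ via Fatou's lemma removes the auxiliary localisation. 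The main bookkeeping obstacle is purely tracking the generic-constant exponents carefully, so that all Young-inequality splittings combine to give exactly $H_{2p+q_p'}$ rather than a larger power; the route is to always pair the $V$-factor against $|X|^{p-1}$ (not $|X|^{p-2}$), so that the $V$-moment enters with exponent exactly $p$, keeping the final exponent at $2p+q_p'$.
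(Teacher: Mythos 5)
Your proposal matches the paper's proof essentially line by line: both apply Itô's formula up to $t\wedge\tau\wedge\tau_n$ with an extra $R$-localisation, split the drift into $b_n(X^n_s)$ plus the increment $b_n(X^n_{\kappa_n(s)})-b_n(X^n_s)$, control the former via the monotonicity bound \eqref{eq: b bound using mon}/\eqref{eq: inner product calculation} and the latter via \eqref{eq: increment bound}, invoke Lemma \ref{lem: V bounds scheme stopped} for the $V$-moment and the product property \eqref{eq: constant condition}, and finish with Grönwall and Fatou. Your bookkeeping remark about pairing $V$ against $|X|^{p-1}$ so it enters with exponent exactly $p$ is also the correct reading of how the paper arrives at $H_{2p+q_p'}$.
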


\begin{proof}
    Let $p\geq 2$. Repeating \eqref{eq: Ito for stopped} with \eqref{eq: inner product calculation} and applying \eqref{eq: increment bound}, as well as recalling $\lvert x_0\rvert^p\leq H_p$, one obtains via Lemma \ref{lem: V bounds scheme stopped} that for $t\in [0,T]$
     \begin{align}
  &E\lvert X^n_{t \wedge\tau\wedge\tau_n \wedge \tilde{\tau}_n^R} \rvert^{p}\leq \lvert x_0\rvert^p+E\int^{t \wedge\tau\wedge\tau_n \wedge \tilde{\tau}_n^R}_0 (p\lvert X^n_s\rvert^{p-2}\langle X^n_s, b_n(X^n_{s})\rangle +c\beta^2 d\lvert X^n_s \rvert^{p-2})ds \nonumber \\
  &+E\int^t_0 c(\lvert X^n_{s\wedge\tau\wedge\tau_n \wedge \tilde{\tau}_n^R}\rvert^p+\lvert b_n(X^n_{s\wedge\tau\wedge\tau_n \wedge \tilde{\tau}_n^R})-b_n(X^n_{\kappa_n(s)\wedge\tau\wedge\tau_n \wedge \tilde{\tau}_n^R}\rvert^p)ds\nonumber \\
  &+E\int^t_0 H_p(1+V(X^n_{s\wedge\tau\wedge\tau_n \wedge \tilde{\tau}_n^R}))ds\nonumber \\
  &\leq \int^{t }_0 c(E\lvert X^n_{s \wedge\tau_n \wedge \tilde{\tau}_n^R}\rvert^p+H_{2p+q_p'})ds.
    \end{align}
    Therefore the result follows by applying Grönwall's inequality and then Fatou's lemma.
\end{proof}
\subsubsection{Uniform Moment Bounds}
Now let us use the Langlart domination inequality to show that with large probability the stopping time $\tau_n$ given in \eqref{eq: tau_n defn} satisfies $\tau_n\geq T$.

\begin{lemma}\label{lemma: decay of stopping time}
      Let Assumption \ref{assmp: Lyapunov fn} hold.  Then for every $T>0$ and $p\geq 2$ one has
    \begin{equation}
      P(\tau_n\in [0,T])\leq H_{6p+q_{3p}'}n^{-p/2}.
    \end{equation}
\end{lemma}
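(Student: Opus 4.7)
The plan is to follow the sketch given at the end of the introduction: apply Markov's inequality to exchange the event $\{\tau_n\in[0,T]\}$ for a moment bound, then use the Lenglart (Langlart) domination inequality to upgrade the uniform-in-stopping-time bounds from Lemmas~\ref{lem: V bounds scheme stopped} and \ref{lem: scheme bounds stopped non-uniform} to expectation-of-supremum bounds.

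Concretely, by the definition of $\tau_n$ in \eqref{eq: tau_n defn},
\[
P(\tau_n\in[0,T])=P\Bigl(\sup_{t\in[0,T]}\max\{V(X^n_{t\wedge\tau_n})^{p},\lvert b(X^n_{t\wedge\tau_n})\rvert^{p}\}\geq n^{p/2}\Bigr)\leq n^{-p/2}E\sup_{t\in[0,T]}\max\{V(X^n_{t\wedge\tau_n})^{p},\lvert b(X^n_{t\wedge\tau_n})\rvert^{p}\}.
\]
It then suffices to bound the expectation by $H_{6p+q'_{3p}}$. Using the pointwise bound \eqref{eq: upper bound for b 2}, namely $\lvert b(x)\rvert^{p}\leq H_{p}(1+\lvert x\rvert^{2p}+V(x)^{2p})$, I can reduce to controlling $E\sup_{t}V(X^n_{t\wedge\tau_n})^{q}$ and $E\sup_{t}\lvert X^n_{t\wedge\tau_n}\rvert^{q}$ for a suitable $q$ (I will take $q=2p$ for the $b$-term).

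The conversion from uniform-in-stopping-time to expectation-of-supremum goes through Proposition~\ref{prop: Langlart ineq}: if $\sup_{\tau}EY_{\tau}\leq M$ for all stopping times $\tau\leq T$, then for any $\alpha\in(0,1)$ one has $E\sup_{t\leq T}Y_{t}^{\alpha}\leq C_{\alpha}M^{\alpha}$. Applying this with $Y=V^{3p}$ using Lemma~\ref{lem: V bounds scheme stopped} gives $\sup_{\tau}EV(X^n_{\tau\wedge\tau_n})^{3p}\leq H_{3p+q'_{3p}}$, hence by Langlart with $\alpha=2/3$ one gets $E\sup_{t}V(X^n_{t\wedge\tau_n})^{2p}\leq c(H_{3p+q'_{3p}})^{2/3}$; similarly Lemma~\ref{lem: scheme bounds stopped non-uniform} with $Y=\lvert X^n\rvert^{3p}$ yields $E\sup_{t}\lvert X^n_{t\wedge\tau_n}\rvert^{2p}\leq c(H_{6p+q'_{3p}})^{2/3}$. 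Plugging these into the bound from \eqref{eq: upper bound for b 2} and absorbing the intermediate exponents using the scaling property $(H_{q})^{p/q}\leq H_{p}$ from \eqref{eq: constant condition}, together with the fact that $q'_{p}$ is increasing in $p$ and $H_{\cdot}$ is increasing in its index, every intermediate constant collapses into a term dominated by $H_{6p+q'_{3p}}$. The $E\sup V^{p}$ term is strictly smaller and is absorbed the same way.

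The main obstacle I anticipate is not analytical but bookkeeping: the Langlart step forces one to start from a strictly higher moment before taking a fractional power, and the resulting indices on the generic constants $H_{\cdot}$ must be reconciled using \eqref{eq: constant condition}. In particular, one must ensure that the power loss incurred by the inequality $(H_q)^{p/q}\leq H_p$ (valid only for $p<q$) combined with the pointwise bound $\lvert b\rvert^{p}\leq H_{p}(1+\lvert x\rvert^{2p}+V^{2p})$ lands cleanly on an index of the form $6p+q'_{3p}$, which dictates the choice to start Langlart from the $3p$-th moment of both $V$ and $\lvert X^n\rvert$. Everything else — the Markov estimate and the invocation of the earlier lemmas — is essentially immediate.
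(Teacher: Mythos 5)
Your proof is correct and follows essentially the same route as the paper: Markov's inequality, the pointwise bound $\lvert b\rvert^{p}\leq H_{p}(1+\lvert x\rvert^{2p}+V^{2p})$ from \eqref{eq: upper bound for b 2}, then Lenglart applied to the $3p$-th moments (from Lemmas~\ref{lem: V bounds scheme stopped} and \ref{lem: scheme bounds stopped non-uniform}, which hold uniformly over stopping times) with exponent $2/3$ to obtain the expectation-of-supremum of the $2p$-th powers, followed by the scaling rules of \eqref{eq: constant condition} to collapse the constants into $H_{6p+q'_{3p}}$. The bookkeeping you flag is the right concern and is handled the same way in the paper (the paper's stated dominating constant processes in that step look like a typo, but the intended bound and your version agree).
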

\begin{proof}
    By the relevant definitions and Markov's inequality, as well as \eqref{eq: upper bound for b 2}, one has
    \begin{align}
    &P(\tau_n\in [0,T])=P(\sup_{t\in [0,T]}\max\{V(X^n_t), \lvert b(X^n_t)\rvert\}>n^{1/2}) \nonumber \\
    &\leq P(\sup_{t\in [0,T]}\max\{V(X^n_t)^p, \lvert b(X^n_t)\rvert^p\}>n^{p/2})\nonumber \\
    &\leq cn^{-p/2}(E\sup_{t\in [0,T]}V(X^n_{t\wedge \tau_n})^p+E\sup_{t\in [0,T]}\lvert b(X^n_{t\wedge \tau_n})\rvert^p)\nonumber \\
    &\leq cn^{-p/2}(E\sup_{t\in [0,T]}V(X^n_{t\wedge \tau_n})^p+H_p(E\sup_{t\in [0,T]}(V(X^n_{t\wedge \tau_n})^{2p}+\lvert X^n_{t\wedge \tau_n}\rvert^{2p}))).
    \end{align}
Now let us show that this expectation may be bounded. Specifically, by Lemma \ref{lem: V bounds scheme stopped} note that for every stopping time 
\begin{equation}
\sup_{t\in [0,T]}EV(X^n_{t\wedge\tau\wedge \tau_n})^{3p}\leq H_{3p+q_{3p}'},\;\;\;\sup_{t\in [0,T]}E\lvert X^n_{t\wedge\tau\wedge \tau_n}\rvert^{3p}\leq H_{6p+q_{3p}'},
\end{equation}
so taking $Z^1_t:=H_{q_{3p}'}$ and $Z^2_t:=H_{3p}$ as constant processes, one may conclude
\begin{equation}
  E\sup_{t\in [0,T]}(V(X^n_{t\wedge \tau_n})^{2p}+\lvert X^n_{t\wedge \tau_n}\rvert^{2p})\leq H_{6p+q_{3p}'},
\end{equation}
at which point the result follows by the properties given in \eqref{eq: constant condition} for the generic constant.
\end{proof}
Now we may prove uniform moment bounds for the scheme over the whole interval
\begin{lemma}\label{lem: moment bounds scheme uniform full intevral}
    Let Assumption \ref{assmp: Lyapunov fn} hold.  Then for every $T>0$ and $p\geq 2$ one has
    \begin{equation}
      \sup_{\tau}\sup_{t\in [0,T]} E\lvert X^n_t \rvert^p\leq H_{12p+q_{6p}'}.
    \end{equation}
\end{lemma}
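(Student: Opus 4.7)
The plan is to combine the two complementary moment bounds already established for the scheme: the sharp stopped bound from Lemma \ref{lem: scheme bounds stopped non-uniform}, valid up to $\tau_n$ and independent of $n$, and the rough unstopped bound from Lemma \ref{lem: bad mom bounds scheme}, which grows polynomially in $n$. The bridge between them is the small-probability estimate on the event $\{\tau_n \leq T\}$ from Lemma \ref{lemma: decay of stopping time}. This is precisely the ``splitting argument'' advertised in the sketch of proof.

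First I would decompose
\begin{equation}
E|X^n_t|^p = E\bigl[|X^n_t|^p 1_{\{\tau_n > T\}}\bigr] + E\bigl[|X^n_t|^p 1_{\{\tau_n \leq T\}}\bigr].
\end{equation}
On $\{\tau_n > T\}$ one has $X^n_t = X^n_{t \wedge \tau_n}$ almost surely, so the first term is dominated by $E|X^n_{t\wedge \tau_n}|^p$, and Lemma \ref{lem: scheme bounds stopped non-uniform} (with $\tau$ taken arbitrarily large) bounds it by $H_{2p + q_p'}$. This piece is already of a strictly smaller order than the target $H_{12p + q_{6p}'}$ via \eqref{eq: constant condition}.

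For the second term, I would apply the Cauchy--Schwarz inequality to separate the growth of $|X^n_t|$ from the smallness of the event. Namely,
\begin{equation}
E\bigl[|X^n_t|^p 1_{\{\tau_n \leq T\}}\bigr] \leq \bigl(E|X^n_t|^{2p}\bigr)^{1/2}\, P(\tau_n \leq T)^{1/2}.
\end{equation}
Lemma \ref{lem: bad mom bounds scheme} applied with $2p$ in place of $p$ gives $E|X^n_t|^{2p} \leq H_{2p}\,n^{p}$, and Lemma \ref{lemma: decay of stopping time} applied with $2p$ in place of $p$ gives $P(\tau_n \leq T) \leq H_{12p + q_{6p}'}\, n^{-p}$. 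The two $n$-powers cancel exactly, and the remaining product of generic constants is absorbed into $H_{12p + q_{6p}'}$ using the properties $H_q H_r \leq H_{q+r}$ and $H_q^{1/2} \leq H_q$ from \eqref{eq: constant condition}. Combining both pieces and taking the supremum over $t \in [0,T]$ delivers the claim.

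There is no genuinely hard step, since the heavy lifting is already contained in Lemmas \ref{lem: scheme bounds stopped non-uniform}, \ref{lem: bad mom bounds scheme} and \ref{lemma: decay of stopping time}; the only thing to get right is to choose the exponent in the Cauchy--Schwarz (or Hölder) split so that the $n$-power from the rough moment bound is precisely neutralised by the decay of $P(\tau_n \leq T)$. With the symmetric choice $r = 2$ this balance is automatic, and the target subscript $12p + q_{6p}'$ is exactly what Lemma \ref{lemma: decay of stopping time} produces once its parameter is doubled.
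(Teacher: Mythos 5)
Your proof is correct and follows essentially the same route as the paper: the same split on $\{\tau_n\leq T\}$ versus $\{\tau_n>T\}$ (the paper's display \eqref{eq: splitting argument}), Cauchy--Schwarz on the bad event, Lemma \ref{lem: bad mom bounds scheme} and Lemma \ref{lemma: decay of stopping time} both applied with $2p$ so the $n^{p/2}$ factors cancel, and Lemma \ref{lem: scheme bounds stopped non-uniform} for the stopped piece. The only cosmetic difference is that the paper's bound on $E\lvert X^n_t\rvert^{2p}$ is written with a slightly larger subscript $H_{4p+q'_{2p}}$ where you use the tighter $H_{2p}$; both are absorbed into the final $H_{12p+q_{6p}'}$.
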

\begin{proof}
    One writes for $t\in [0,T]$
    \begin{align}\label{eq: splitting argument}
     \lvert X^n_t \rvert^p&=\lvert X^n_t \rvert^p1_{\tau_n\leq T} +\lvert X^n_t \rvert^p1_{\tau_n\geq T}\nonumber \\
      &\leq \lvert X^n_t \rvert^p1_{\tau_n\leq T} +\lvert X^n_{t\wedge \tau_n} \rvert^p.
    \end{align}
    Therefore by Lemma \ref{lem: bad mom bounds scheme}, Lemma \ref{lemma: decay of stopping time} and Lemma \ref{lem: scheme bounds stopped non-uniform}
    \begin{align}
      E\lvert X^n_t \rvert^p&=c\sqrt{E\lvert X^n_t \rvert^{2p}P(\tau_n\leq T)} +cE\lvert X^n_{t\wedge \tau_n} \rvert^p\nonumber \\
      &\leq \sqrt{H_{4p+q'_{2p}}n^{p}H_{12p+q_{6p}'}n^{-p}}+H_{2p+q_p'}\nonumber \\
      &\leq H_{12p+q_{6p}'}.
    \end{align}
     Here we have used several properties of the generic constant. In particular, since $q_p>1$ is decreasing in $p>0$, its conjugate $q_p'>1$ is decreasing in $p>0$. Furthermore, since $H_p$ is increasing in $p>0$ (up to the other generic constant $c=c(p,T,\mu)>0$), one has $H_{2p}\leq H_{12p+q_{6p}'}$.
\end{proof}

\section{Proof of Convergence}
In this section we prove Theorem \ref{thm: conv of scheme} and Theorem \ref{thm: conv of scheme 2}.
\subsection{Proof of Theorem \ref{thm: conv of scheme}}
We begin by splitting the $L^p$ error between the scheme and the true solution as
    \begin{align}\label{eq: final splitting}
\lvert X_t-X^n_t\rvert^p&\leq c\lvert X_{t\wedge \tau_n}-X^n_{t\wedge \tau_n}\rvert^p+c\lvert X^n_{t\wedge \tau_n}-X^n_t\rvert^p\nonumber \\
&+c\lvert X_{t\wedge \tau_n}-X_t\rvert^p\nonumber \\
&=:I^1_t+I^2_t+I^3_t.
    \end{align}
To control the first term $I^1_t$, let us assume $p\geq 2$ and apply the Itô's formula in Proposition \ref{prop: local Ito}. Then by the monotonicity assumption~\eqref{eq: mon assump}, and the fact $b_n(X^n_{t\wedge \tau_n})=b(X^n_{t\wedge \tau_n})$ by construction, one obtains
 \begin{align}
\lvert X_{t\wedge \tau_n}-X^n_{t\wedge \tau_n}\rvert^p&\leq \int^{t\wedge \tau_n}_0 p\langle b(X_s)-b(X^n_{\kappa_n(s)}),X_s-X^n_s\rangle ds\nonumber \\
&\leq \int^t_0 c( \lvert X_{s\wedge \tau_n}-X^n_{s\wedge \tau_n} \rvert^p+\lvert b(X^n_{s\wedge \tau_n})-b(X^n_{\kappa_n(s)\wedge \tau_n})\rvert^p) ds.
    \end{align}
Therefore we must control the increment term. To this end, let us apply Assumption \ref{assmp: Lyapunov fn}, the definition of $\tau_n$ in \eqref{eq: tau_n defn}, and Hölder's inequality to obtain
     \begin{align}
       &E\lvert b(X^n_{s\wedge \tau_n})-b(X^n_{\kappa_n(s)\wedge \tau_n})\rvert^p  \nonumber\\
       &\leq cE(1+V(X^n_{s\wedge \tau_n})^p+V(X^n_{\kappa_n(s)\wedge \tau_n})^p)(n^{-p}\lvert b(X^n_{\kappa_n(s)\wedge \tau_n})\rvert^p+\beta^p\lvert W_{s\wedge \tau_n}-W_{\kappa_n(s)\wedge \tau_n} \rvert^p).
    \end{align}
    Now we may apply the bound $\lvert b(X^n_{\kappa_n(s)\wedge \tau_n})\rvert\leq n^{1/2}$, that follows from the definition of $\tau_n$, to obtain via Proposition \ref{prop: mom bounds}
         \begin{align}\label{eq: inc bound}
       &E\lvert b(X^n_{s\wedge \tau_n})-b(X^n_{\kappa_n(s)\wedge \tau_n})\rvert^p  \nonumber\\
       &\leq c(1+\sup_{s\in [0,T]}EV(X^n_{s\wedge \tau_n})^p)n^{-p}\nonumber \\
       &+c\beta^p\sqrt{(1+\sup_{s\in [0,T]}EV(X^n_{s\wedge \tau_n})^{2p})E\lvert W_{s\wedge \tau_n}-W_{\kappa_n(s)\wedge \tau_n} \rvert^{2p}}\nonumber \\
       &\leq H_{4p+q_{2p}'}n^{-p/2}.
    \end{align}
    Applying this, and furthermore Gronwall's inequality (since the process has finite moments by Proposition \ref{prop: mom bounds}), one obtains that 
    \begin{equation}
    \sup_{t\in [0,T]}EI^1_t\leq H_{24p+q_{12p}'}n^{-p/2}
    \end{equation}
    uniformly in $t\in [0,T]$. 
    
    For $I^2_t$ observe that for $t\in [0,T]$ one may write
    \begin{equation}
        I^2_t=c\lvert X^n_{t\wedge \tau_n}-X^n_t\rvert^p1_{\tau_n\leq T},
    \end{equation}
    since in the case $\tau_n\geq T$ the expression above vanishes. Then applying Lemma \ref{lemma: decay of stopping time} one has for $t\in [0,T]$ that
    \begin{align}
        EI^2_t&=c(E\lvert X^n_{t\wedge \tau_n}-X^n_t\rvert^{2p})^{1/2}P(
        \tau_n\leq T)^{1/2}\nonumber \\
        &\leq  \sqrt{H_{12p+q'_{6p}}}\sqrt{H_{12p+q'_{6p}}n^{-p}} \nonumber \\
        &\leq H_{24p+q'_{12p}}n^{-p/2},
    \end{align}
    by the properties of the generic constant in \eqref{eq: constant condition}, and the fact that $q'_p$ is increasing in $p>0$. A similar argument yields the same bound for $I^3_t$.

\subsection{Proof of Theorem \ref{thm: conv of scheme 2}}
For convenience let us denote
\begin{equation}
    e_t:=X_t-X^n_t.
\end{equation}
Let us now split as
    \begin{align}\label{eq: new splitting}
&\lvert e_{t\wedge \tau_n}\rvert^p\leq \int^{t\wedge  \tau_n}_0 p\langle b(X_{\kappa_n(s)})-b(X^n_{\kappa_n(s)}),\lvert e_{\kappa_n(s)}\rvert^{p-2}e_{\kappa_n(s)}\rangle ds\nonumber \\
&+\int^{t\wedge  \tau_n}_0 p\langle b(X_s)-b(X_{\kappa_n(s)})-\nabla b(X_{\kappa_n(s)})(X_s-X_{\kappa_n(s)}),\lvert e_{\kappa_n(s)}\rvert^{p-2}e_{\kappa_n(s)}\rangle ds \nonumber \\
&+\int^{t\wedge \tau_n}_0 p\langle \nabla b(X_{\kappa_n(s)})(X_s-X_{\kappa_n(s)}),\lvert e_{\kappa_n(s)}\rvert^{p-2}e_{\kappa_n(s)}\rangle ds \nonumber \\
&+\int^{t\wedge \tau_n}_0 p\langle b(X_{\kappa_n(s)})-b(X^n_{\kappa_n(s)}),\lvert e_s\rvert^{p-2}e_s-\lvert e_{\kappa_n(s)}\rvert^{p-2}e_{\kappa_n(s)}\rangle ds\nonumber \\
&=:I^{11}_t+I^{12}_t+I^{13}_t+I^{14}_t.
    \end{align}
Similarly to before, one uses the monotonicity assumption \eqref{eq: mon assump} to bound $I^{11}_t$ as
    \begin{align}\label{eq: I^1_t1}
  EI^{11}_t&\leq   c \int^t_0 E\lvert e_{\kappa_n(s)\wedge\tau_n}\rvert^pds.
    \end{align}
Furthermore, using Assumption \ref{assmp: Lyapunov controls nabla b} one may bound
\begin{align}
    &I^{12}_t\leq c \int^{t\wedge\tau_n}_0 ((1+\hat{V}(X_s)^p+\hat{V}(X_{\kappa_n(s)})^p)\lvert X_s-X_{\kappa_n(s)}\rvert^{2p}+\lvert e_{\kappa_n(s)}\rvert^p)ds \nonumber \\
    &\leq c \int^t_0 ((1+\hat{V}(X_{s\wedge  \tau_n})^p+\hat{V}(X_{\kappa_n(s)\wedge  \tau_n})^p)\lvert X_{s\wedge  \tau_n}-X_{\kappa_n(s)\wedge \tau_n}\rvert^{2p}+\lvert e_{\kappa_n(s)\wedge  \tau_n }\rvert^p)ds .
\end{align}
Then using \eqref{eq: upper bound for b} one writes
\begin{align}
   &(1+\hat{V}(X_{s\wedge  \tau_n})^p+\hat{V}(X_{\kappa_n(s)\wedge  \tau_n})^p)\lvert X_{s\wedge  \tau_n}-X_{\kappa_n(s)\wedge \tau_n}\rvert^{2p} \nonumber \\
   &\leq n^{-2p}H_{2p}(1+\hat{V}(X_{s\wedge  \tau_n})^{5p}+\hat{V}(X_{\kappa_n(s)\wedge  \tau_n})^{5p}+V(X_{\kappa_n(s)\wedge \tau_n})^{5p}+\lvert X_{\kappa_n(s)\wedge \tau_n}\rvert^{5p})\nonumber \\
   &+\beta^{2p}(1+V(X_{\kappa_n(s)\wedge \tau_n})^{2p})\lvert W_{s\wedge  \tau_n}-W_{\kappa_n(s)\wedge \tau_n}\rvert^{2p},
\end{align}
so that one obtains by the independence of $X_{\kappa_n(s)\wedge \tau_n}$ with $W_{s\wedge  \tau_n}-W_{\kappa_n(s)\wedge \tau_n}$ that
\begin{align}\label{eq: increment bound V hat}
   &(1+\hat{V}(X_{s\wedge  \tau_n})^p+\hat{V}(X_{\kappa_n(s)\wedge  \tau_n})^p)\lvert X_{s\wedge  \tau_n}-X_{\kappa_n(s)\wedge \tau_n}\rvert^{2p} \nonumber \\
   &\leq H_{2p}(1+\sup_{s\in [0,T]}E\hat{V}(X_{s\wedge  \tau_n})^{5p}+EV(X_{\kappa_n(s)\wedge \tau_n})^{5p}+E\lvert X_{\kappa_n(s)\wedge \tau_n}\rvert^{5p})n^{-2p}\nonumber \\
   &+H_{2p}(1+EV(X_{\kappa_n(s)\wedge \tau_n})^{2p})n^{-p} \nonumber \\
   &\leq H_{2p}(1+\sup_{s\in [0,T]}E\hat{V}(X_{s\wedge  \tau_n})^{5p}+EV(X_{\kappa_n(s)\wedge \tau_n})^{5p}+E\lvert X_{\kappa_n(s)\wedge \tau_n}\rvert^{5p})n^{-p}.
\end{align}
For $I^{13}_t$ the diffusion part of the increment term vanishes since the other factors are $\mathcal{F}_{\kappa_n(s)}$-measurable, so that additionally applying the bound in \eqref{eq: nabla b bound} one obtains
\begin{align}
   EI^{13}_t&\leq c E\int^{t\wedge \tau_n}_0 \lvert \nabla b(X_{\kappa_n(s)}) \rvert  \lvert e_{\kappa_n(s)}\rvert^{p-1} \biggr \lvert \int^s_{\kappa_n(s)}b(X_u) du \biggr \rvert ds\nonumber \\
   &\leq c E\int^t_0 \lvert \nabla b(X_{\kappa_n(s)}) \rvert  \lvert e_{\kappa_n(s)}\rvert^{p-1} \biggr \lvert \int^s_{\kappa_n(s)}b(X_u) du \biggr \rvert ds\nonumber \\
   &\leq c \int^t_0 \biggr (E\lvert e_{\kappa_n(s)}\rvert^p+E V(X_{\kappa_n(s)})  ^{p} E\biggr \lvert \int^s_{\kappa_n(s)}b(X_u) du \biggr \rvert^{p}\biggr) ds.
\end{align}
Now one may apply Hölder's inequality and \eqref{eq: upper bound for b 2} to obtain
\begin{align}
   EI^{13}_t&\leq c \int^t_0 \biggr (E\lvert e_{\kappa_n(s)}\rvert^p+n^{1-p}E V(X_{\kappa_n(s)})  ^{p}   \int^s_{\kappa_n(s)}\lvert b(X_u) \rvert^{p}du  \biggr) ds\nonumber \\
   &\leq c \int^t_0 E\lvert e_{\kappa_n(s)}\rvert^pds+\int^t_0n^{1-p}E V(X_{\kappa_n(s)})  ^{p}   \int^s_{\kappa_n(s)}H_{p}(1+V(X_u)^{2p}+\lvert X_u\rvert^{2p})du   ds\nonumber \\
   &\leq c \int^t_0 E\lvert e_{\kappa_n(s)}\rvert^pds+\int^t_0n^{1-p}E   \int^s_{\kappa_n(s)}H_{p}(1+V(X_{\kappa_n(s)})^{3p}+V(X_u)^{3p}+\lvert X_u\rvert^{3p})du   ds.
\end{align}
Finally for $I^{14}_t$ one first observes
\begin{equation}
    \lvert x \rvert^{p-2}x-\lvert y \rvert^{p-2}y=\lvert x \rvert^{p-2}(x-y)+y(\lvert x \rvert^{p-2}-\lvert y \rvert^{p-2}),
\end{equation}
so that applying the mean value theorem to $\lvert \cdot \rvert^{p-2}$ there exists $t\in [0,1]$
\begin{align}
    \lvert x \rvert^{p-2}x-\lvert y \rvert^{p-2}y&\leq \lvert x \rvert^{p-2}( x-y)+c \lvert y\rvert \lvert tx+(1-t)y\rvert^{p-4}\langle tx+(1-t)y,x-y\rangle,\nonumber \\
    &\leq c( \lvert x \rvert^{p-2}+\lvert y \rvert ^{p-2})\lvert x-y\rvert.
\end{align}
Therefore one has
\begin{align}
   & I^{14}_t\leq c\int^{t\wedge \tau_n}_0 (1+V(X_{\kappa_n(s)})+V(X^n_{\kappa_n(s)}))\lvert e_{\kappa_n(s)} \rvert\nonumber \\
   &\times( \lvert e_s \rvert^{p-2}+\lvert e_{\kappa_n(s)} \rvert ^{p-2})\lvert e_s-e_{\kappa_n(s)}\rvert  ds \nonumber \\
   &\leq c\int^{t\wedge \tau_n}_0 ( \lvert e_s \rvert^p+\lvert e_{\kappa_n(s)} \rvert ^p) ds\nonumber \\
   &+c\int^{t\wedge \tau_n}_0 (1+V(X_{\kappa_n(s)\wedge \tau_n})+V(X^n_{\kappa_n(s)}))^p \lvert e_{s\wedge \tau_n}-e_{\kappa_n(s)\wedge \tau_n}\rvert^pds.
\end{align}
Furthermore using Assumption \ref{assmp: Lyapunov fn}
\begin{align}
&\lvert e_{s\wedge \tau_n}-e_{\kappa_n(s)\wedge \tau_n}\rvert^p \leq cn^{1-p}\int^{s\wedge \tau_n}_{\kappa_n(s)\wedge \tau_n}\lvert b(X_u)-b(X^n_{\kappa_n(u)})\rvert^p du\nonumber \\
&\leq cn^{1-p}\int^{s\wedge \tau_n}_{\kappa_n(s)\wedge \tau_n}c(1+V(X_u)^p+V(X^n_{\kappa_n(u)})^p)(\lvert X_u\rvert^p+\lvert X^n_{\kappa_n(u)}\rvert^p )ds\nonumber \\
&\leq cn^{1-p}\int^{s}_{\kappa_n(s)}c(1+V(X_{u})^{2p}+V(X^n_{\kappa_n(u)\wedge \tau_n})^{2p}+\lvert X_u\rvert^{2p}+\lvert X^n_{\kappa_n(u)\wedge \tau_n}\rvert^{2p} )ds.
\end{align}
Therefore
\begin{align}
 &E\int^{t\wedge \tau_n}_0 (1+V(X_{\kappa_n(s)})+V(X^n_{\kappa_n(s)}))^p \lvert e_{s\wedge \tau_n}-e_{\kappa_n(s)\wedge \tau_n}\rvert^pds   \nonumber \\
 &\leq c\int^t_0n^{1-p}\int^{s}_{\kappa_n(s)}c(1+EV(X_{u})^{3p}+EV(X_{\kappa_n(u)})^{3p}+EV(X^n_{\kappa_n(u)\wedge \tau_n})^{3p}+\nonumber \\
 &+E\lvert X_u\rvert^{3p}+E\lvert X^n_{\kappa_n(u)\wedge \tau_n}\rvert^{3p} )duds.
\end{align}
Summing the previous terms one sees that the error is proportional to $n^{-p}$ as required. In order to access the dependence on the two generic constants, note that by Proposition \ref{prop: mom bounds} the highest order dependence must come from \eqref{eq: increment bound V hat}, and in particular one obtains via Young's inequality and the properties \eqref{eq: constant condition} that
\begin{align}
    \sup_{u\in [0,t]}E\lvert e_{u\wedge \tau_n}\rvert &\leq \int^t_0 (H_{2p}H_{10p+q_{5p}'}+H_{2p}\hat{H}_{5p+q'_{5p}}+\sup_{u\in[0,s]}E\lvert e_{u\wedge \tau_n}\rvert^p)ds \nonumber \\
    &\leq \int^t_0 (H_{12p+q_{5p}'}+\hat{H}_{7p+q'_{5p}}+\sup_{u\in[0,s]}E\lvert e_{u\wedge \tau_n}\rvert^p)ds,
\end{align}
and so by applying Grönwall's inequality one obtains
\begin{equation}
    \sup_{u\in [0,T]}E\lvert e_{u\wedge \tau_n} \rvert\leq (H_{12p+q_{5p}'}+\hat{H}_{7p+q'_{5p}})n^{-p}.
\end{equation}
For $I^2_t$ and $I^3_t$ we perform a refinement of the argument in the previous section. In particular, applying Lemma \ref{lemma: decay of stopping time} once more one has
    \begin{align}
        EI^2_t&=c(E\lvert X^n_{s\wedge \tau_n}-X^n_s\rvert^{2p})^{1/2}P(
        \tau_n\leq T)^{1/2}\nonumber \\
        &\leq \sqrt{H_{12p+q'_{6p}}H_{24p+q'_{12p}}n^{-2p}}\nonumber \\
        &\leq H_{24p+q'_{12p}}n^{-p}.
    \end{align}
    As before, a similar argument holds for $I^3_t$. Collating all relevant terms together the result follows by the properties of the generic constant outlined in \eqref{eq: constant condition}.

\section{Singular Interacting Particles}\label{sec: singular ips}
Now we move on to our main application. Following \cite{Guillin2022OnSO}, we consider an interacting particle system of the form
\begin{equation}\label{eq: key IPS}
    dX^{i,N}_t=-\frac{1}{N}\sum_{j=1, \; j\not=i}^N U'(X^{i,N}_t-X^{j,N}_t)dt-Q'(X^{i,N}_t)dt+\sqrt{2\sigma/N}dW^i_t,
\end{equation}
for $i=1,2,...,N$, with initial conditions satisfying $X^{1,N}_0<X^{2,N}_0<...<X^{N,N}_0$. Since we consider one-dimensional particles, the full system $X^N_t=(X^{1,N}_t,...,X^{N,N}_t)$ is considered as a vector in $\mathbb{R}^N$. Additionally, $\sigma>0$ is a parameter that controls the noise, and one has the potentials $U\in C^1(\mathbb{R}\setminus \{0\})$ and $Q\in C^1(\mathbb{R})$, usually referred to as interaction and confinement potentials respectively. We assume that $U$ and $Q$ take a particular specific form (mostly to simplify calculations)
\begin{assmp}\label{assmp: specific assmp}
    Suppose for $\alpha> 1$ one has that $U:\mathbb{R}\to\mathbb{R}$ is of the form
    \begin{equation}
U(x):=\frac{1}{\alpha-1}\lvert x\rvert ^{-\alpha+1},
    \end{equation}
    so that 
    \begin{equation}
        U'(x):=-\frac{x}{\lvert x \rvert^{\alpha+1}}.
    \end{equation}
    Assume additionally that $Q\in C^1(\mathbb{R})$ is such that $Q'$ is Lipschitz.
\end{assmp}
Under this assumption each pair of particles a distance $r>0$ away from one another repulses each another (or in other words contributes to the others instantaneous drift) with magnitude $r^{-\alpha}$. Therefore \eqref{eq: key IPS} becomes
\begin{align}\label{eq: key IPS 2}
    dX^{i,N}_t&=\frac{1}{N}\biggr ( \sum_{j=1,2,...,i-1}( X^{i,N}_t-X^{j,N}_t)^{-\alpha}-\sum_{j=i+1,i+2,...,N}( X^{j,N}_t-X^{i,N}_t)^{-\alpha}\biggr)dt\nonumber \\
    &+Q'(X^{i,N}_t)dt+\sqrt{2\sigma/N}dW^i_t,
\end{align}
and since this enforces that $U(-x)=U(x)$, for
\begin{equation}\label{eq: U_N defn}
    U_N(x):=\frac{1}{N}\sum_{1\leq i<j\leq N} U(x^i-x^j),\qquad Q_N(x):=\frac{1}{N}\sum_{k=1}^NQ(x^k),
\end{equation}
one may furthermore rewrite \eqref{eq: key IPS} as
\begin{equation}\label{eq: key IPS grad form}
    dX^N_t=-\nabla U_N(X^N_t)dt-\nabla Q_N(X^N_t)+\sqrt{2\sigma/N}dW_t.
\end{equation}

\subsection{Connection to General Framework}\label{subsec: connection general framework}
    In order to prove the convergence of an Euler scheme method for \eqref{eq: key IPS}, it shall be necessary to show that the framework of Assumption \ref{assmp: Lyapunov fn} and Assumption \ref{assmp: Lyapunov controls nabla b} applies to this setting. To this end
    \begin{itemize}
        \item we let the open set for which \eqref{eq: key IPS} takes values be given as
        \begin{equation}\label{eq: D_N defn}
    D_N:=\{x\in \mathbb{R}^N\vert x_1< x_2<...<x_N\}\subset \mathbb{R}^N,
\end{equation}
i.e. the set which preserves the ordering of the initial conditions.
\item we set $b=-\nabla U_N-\nabla Q_N$ and $\beta=\sqrt{2\sigma/N}$
\item the Lyapunov function we use shall be given as 
\begin{equation}\label{eq: V_N defn}
        V_N(x):=c+N^{\frac{2}{\alpha-1}}U_N(x)^{1+\frac{2}{\alpha-1}}= c+\frac{c}{N}\biggr (\sum_{1\leq i<j\leq N} \lvert x^i-x^j\rvert^{1-\alpha} \biggr)^{\frac{\alpha+1}{\alpha-1}},
    \end{equation}
     \begin{equation}\label{eq: V_N hat defn}
        \hat{V}_N(x):=c+N^{\frac{3}{\alpha-1}}U_N(x)^{1+\frac{3}{\alpha-1}}= c+\frac{c}{N}\biggr (\sum_{1\leq i<j\leq N} \lvert x^i-x^j\rvert^{1-\alpha} \biggr)^{\frac{\alpha+2}{\alpha-1}},
    \end{equation}
    where $c>0$ depends only on $\alpha>0$ and the Lipschitz constant of $\nabla Q_N$
    \item as a result, the tamed Euler scheme approximation $\hat{X}^{n,N}_t$ following the framework established earlier is given as
    \begin{equation}\label{eq: IPS Euler scheme}
      d\bar{X}^{n,N}_t =b_{n.N}(\bar{X}^{n,N}_{\kappa_n(t)})dt+\sqrt{2\sigma/N}dW_t,
    \end{equation}
    for 
    \begin{align}\label{eq: tamed IPS coeff}
        b_{n,N}^i(x)&:=\biggr (-\nabla U_N(x)-\nabla Q_N(x)\biggr)\nonumber \\
        &\times1_{x\in D_N}\cdot1_{V_N(x)\leq n^{1/2}}\cdot 1_{\lvert \nabla U_N(x)+\nabla Q_N(x)\rvert\leq n^{1/2}}.
    \end{align}
    \end{itemize}

\subsection{Main Results}
Let us define
 \begin{defn}
        Let $\textit{poly}(N)$ denote any quantity of the form $\textit{poly}(N)=cN^{l}$ for some $c,l>0$ that do not depend on $N\in \mathbb{N}$ or the stepsize parameter $n\geq 1$.
    \end{defn}   
    Our main result of this section is then
\begin{thm}\label{thm: conv of particle Euler scheme}
    Consider \eqref{eq: key IPS}, and suppose Assumption \ref{assmp: specific assmp} holds. Suppose there exists $c,p_*>0$ such that for every $N\in \mathbb{N}$ the sequence of initial conditions $x_{0,N}\in D_N$ satisfies
    \begin{equation}
        \max_{i=2,...,N}\lvert x_{0,N}^i-x^{i-1}_{0,N}\rvert^{-1}=\textit{poly}(N),\;\;\;\lvert x_{0,N}\rvert =poly(N).
    \end{equation}
    Then for every $p,T>0$, $N\in \mathbb{N}$ and $n\in \mathbb{N}$ satisfying 
    \begin{equation}
    n\geq \max\{V_N(x)^2, \hat{V}_N(x)^2,(\nabla U(x)+\nabla Q(x))^2\}
    \end{equation}
    one has for \eqref{eq: IPS Euler scheme} and \eqref{eq: tamed IPS coeff} that
     \begin{equation}
       \sup_{t\in [0,T]}E\lvert \bar{X}^{n,N}_t-X^N_t\rvert^p\leq \textit{poly}(N)n^{-p}.
    \end{equation}
\end{thm}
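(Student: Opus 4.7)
The plan is to derive the result from Theorem~\ref{thm: conv of scheme 2} by verifying Assumptions~\ref{assmp: Lyapunov fn} and~\ref{assmp: Lyapunov controls nabla b} for the IPS~\eqref{eq: key IPS grad form} on the convex open set $D_N$, using the Lyapunov functions $V_N,\hat V_N$ introduced in~\eqref{eq: V_N defn}--\eqref{eq: V_N hat defn}, and then tracking the dependence on $N$ of every constant produced in the course of the verification. Throughout, write $\gamma:=(\alpha+1)/(\alpha-1)$ and $\hat\gamma:=(\alpha+2)/(\alpha-1)$, so that $V_N\sim N^{2/(\alpha-1)} U_N^\gamma$ and $\hat V_N\sim N^{3/(\alpha-1)} U_N^{\hat\gamma}$ up to additive constants; these exponents are chosen precisely so that the homogeneities of the derivatives of $U_N$ match those of $V_N,\hat V_N$.

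I would first check the Lipschitz-type condition~\eqref{eq: Lyapunov Lipschitz b}. Since $D_N$ is convex, the mean value theorem gives $|b(x)-b(y)|\leq \sup_{z\in[x,y]}\lVert \nabla^2(U_N+Q_N)(z)\rVert_{\mathrm{op}}\,|x-y|$. The Hessian of $U_N$ contains the singular factors $|z^i-z^j|^{-\alpha-1}$, but on a segment in $D_N$ each $|z^i-z^j|$ is a convex combination of positive numbers, so $|z^i-z^j|^{-\alpha-1}\leq \max\{|x^i-x^j|^{-\alpha-1},|y^i-y^j|^{-\alpha-1}\}$. A direct scaling computation using the identity $\gamma(\alpha-1)=\alpha+1$ then gives $|x^i-x^j|^{-(\alpha+1)}\leq cN\,V_N(x)$, and summing over pairs yields~\eqref{eq: Lyapunov Lipschitz b} with constants polynomial in $N$. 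Replacing $V_N$ by $\hat V_N$ and using one further Taylor expansion involving third derivatives of $U_N$ would produce the remainder bound~\eqref{eq: higher order Lyapunov} of Assumption~\ref{assmp: Lyapunov controls nabla b}.

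The central obstacle is the generator inequality~\eqref{eq: generator assmp}. Expanding $\mathcal{L}(V_N^p)$ by the chain rule produces four contributions: the crucial negative drift term $-p\gamma U_N^{p\gamma-1}|\nabla U_N|^2$ from self-repulsion, a cross term with $\nabla Q_N$ handled by Young's inequality, and two noise-generated terms involving $\Delta U_N$ and $U_N^{p\gamma-2}|\nabla U_N|^2$, each carrying the factor $\sigma/N$. Meanwhile the penalty $q_p^{-1}|\nabla V_N^p|^{q_p}$ scales as $U_N^{(p\gamma-1)q_p}|\nabla U_N|^{q_p}$ times $N$-dependent constants. Choosing $q_p\in(1,2)$ so that $q_p'$ is affine in $p$ (as announced just after Definition~\ref{defn: generic constant}), one applies Young's inequality to split this penalty into an $\epsilon$-fraction of $p\gamma U_N^{p\gamma-1}|\nabla U_N|^2$, which is absorbed against the negative drift term, plus a residual bounded polynomially in $V_N$. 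The power $\gamma=(\alpha+1)/(\alpha-1)$ is exactly the one that makes the homogeneity of $U_N^{p\gamma-1}|\nabla U_N|^2$ match that of $V_N^p$, enabling this closure. Careful bookkeeping yields constants $a_p,b_p$ polynomial in $N$, and the identical argument for $\hat V_N$ yields $\hat a_p,\hat b_p$ polynomial in $N$.

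The remaining items in both assumptions would be routine: the boundary explosion~\eqref{eq: explosion on boundary} is immediate since $\partial D_N$ corresponds to particle collisions that blow up $U_N$; the bound~\eqref{eq: nabla V power bound} follows by direct differentiation of~\eqref{eq: V_N defn}; and the monotonicity~\eqref{eq: mon assump} holds with $\mu$ equal to the Lipschitz constant of $Q'$, because $U$ is convex on $(0,\infty)$ and so $\langle -\nabla U_N(x)+\nabla U_N(y),x-y\rangle\leq 0$ for $x,y\in D_N$. The hypotheses on $x_{0,N}$ ensure $V_N(x_{0,N})$, $\hat V_N(x_{0,N})$, $|b(x_{0,N})|$ and $|x_{0,N}|$ are all $\textit{poly}(N)$, so by Definitions~\ref{defn: generic constant} and~\ref{defn: generic constant 2} each $H_q,\hat H_q$ is $\textit{poly}(N)$ for fixed $q,T$. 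Plugging into Theorem~\ref{thm: conv of scheme 2}, and noting that $q'_{12p},q'_{5p}$ remain finite by the affine dependence on $p$, would yield the stated convergence at rate $\textit{poly}(N)\,n^{-p}$; for $p<4$ one simply applies Jensen's inequality to the $p=4$ bound.
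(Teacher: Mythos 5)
Your proposal follows the same overall architecture as the paper: verify Assumptions~\ref{assmp: Lyapunov fn} and~\ref{assmp: Lyapunov controls nabla b} for the choices $D_N$, $V_N$, $\hat V_N$, track the $N$-dependence of all constants, and invoke Theorem~\ref{thm: conv of scheme 2}. The verification of the Lipschitz conditions, monotonicity, boundary explosion and $\lvert\nabla V\rvert$-bound, and the $\textit{poly}(N)$ bookkeeping through $H_q,\hat H_q$, all coincide with the paper's route (the paper applies the mean value theorem componentwise to $(x_i-x_j)^{-\alpha}$ rather than through the Hessian operator norm, but the two are interchangeable on the convex set $D_N$). You also make explicit the reduction of $p<4$ to $p=4$ via Jensen, which the paper leaves implicit.

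The one place your sketch diverges, and where it is slightly imprecise, is the generator inequality~\eqref{eq: generator assmp}. The paper does not absorb the $q_p^{-1}\lvert\nabla V_N^p\rvert^{q_p}$ penalty into $b_p V_N^p$; instead it bounds every term pointwise in the minimal gap $l(x)$ (Lemma~\ref{lem: bounds in terms of min}), chooses $q_p$ as in~\eqref{eq: q defn} so that the penalty's exponent of $l^{-1}$ matches that of the Laplacian term, and then applies the scalar optimization Lemma~\ref{lem: maximum of poly} to obtain a \emph{uniform} bound $a_p=\textit{poly}(N)$ with $b_p=0$. Your Young-absorption variant also works, since the negative drift term carries a strictly larger $l^{-1}$-exponent than both the penalty and the remaining positive terms for any $q_p<((ps-1)(\alpha-1)+2\alpha)/((ps-1)(\alpha-1)+\alpha)$, but your justification for it --- that $\gamma=(\alpha+1)/(\alpha-1)$ makes the homogeneity of $U_N^{p\gamma-1}\lvert\nabla U_N\rvert^2$ equal to that of $V_N^p$ --- is not correct: in terms of $l$, $V_N^p\sim l^{-p(\alpha+1)}$ while $U_N^{p\gamma-1}\lvert\nabla U_N\rvert^2\sim l^{-(p+1)(\alpha+1)}$, and it is precisely this strict domination (not a match) that makes the argument close. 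The exponent $\gamma$ is in fact determined by the Lipschitz condition (your earlier, correct remark that $V_N$ is calibrated to the scaling of $\nabla^2 U_N\sim l^{-\alpha-1}$), not by matching to $V_N^p$ in the generator estimate.
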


It is clear that this result follows directly from Theorem \ref{thm: conv of scheme 2} after verifying Assumption \ref{assmp: Lyapunov fn} and Assumption \ref{assmp: Lyapunov controls nabla b} for the choice of SDE and Lyapunov functions given above.

\subsection{Verifying Lyapunov Properties}
\begin{note}
    In this section the generic constant $c>0$ does \textbf{not} depend on $p>0$, but does depend on the parameter $\alpha>1$. This is so that certain properties of the constants given in Assumption \ref{assmp: Lyapunov fn} and \ref{assmp: Lyapunov controls nabla b} can be verified.
\end{note}
Let us first show that $V_N$ and $\hat{V}_N$ fit the Lipschitz-type assumptions in \eqref{eq: Lyapunov Lipschitz b} and \eqref{eq: higher order Lyapunov}.
\begin{lemma}\label{lem: Lyapunov lipschitz}
    There exists a constant $c>0$ not depending on $N\in \mathbb{N}$ such that for every $x,y \in D_N$ one has
   \begin{equation}
       \lvert \nabla U_N(x)-\nabla U_N(y)\rvert\leq (V_N(x)+V_N(y)) \lvert x-y\rvert,
   \end{equation}
   for $V_N$ as given in \eqref{eq: V_N defn}.
\end{lemma}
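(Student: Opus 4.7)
The plan is to integrate $\nabla^2 U_N$ along the segment from $x$ to $y$, bound its operator norm by a power of $U_N$ via a graph-Laplacian identity combined with an elementary $\ell^p$ estimate, and then close with convexity of $V_N$ along the segment.

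First I would note that $D_N$ is convex (a finite intersection of the half-spaces $\{z^i < z^{i+1}\}$), so $z_t := x + t(y-x)$ lies in $D_N$ for every $t \in [0,1]$ and
$$\nabla U_N(y) - \nabla U_N(x) = \int_0^1 \nabla^2 U_N(z_t)(y-x)\, dt.$$
On $D_N$ the signs of the pairwise differences $z^k - z^l$ are fixed, so a direct computation yields $\nabla^2 U_N(z) = (\alpha/N)(D(z) - A(z))$ with $A(z)_{kl} = \lvert z^k - z^l \rvert^{-\alpha-1}$ for $k \neq l$ and $D(z)$ the diagonal matrix of row sums of $A(z)$. This is a weighted graph Laplacian, hence positive semi-definite, so its operator norm is dominated by its trace:
$$\lVert \nabla^2 U_N(z) \rVert \leq \frac{2\alpha}{N}\sum_{k<l}\lvert z^k - z^l \rvert^{-\alpha-1}.$$

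The key combinatorial step follows: for any nonnegative $(a_{kl})$ and $p \geq 1$, $\sum a_{kl}^p \leq (\sum a_{kl})^p$. Applied with $p = (\alpha+1)/(\alpha-1) \geq 1$ and $a_{kl} = \lvert z^k - z^l\rvert^{-\alpha+1}$, this gives
$$\sum_{k<l}\lvert z^k - z^l\rvert^{-\alpha-1} \leq \bigl((\alpha-1) N U_N(z)\bigr)^{(\alpha+1)/(\alpha-1)},$$
so that $\lVert \nabla^2 U_N(z) \rVert \leq c\, N^{2/(\alpha-1)} U_N(z)^{1 + 2/(\alpha-1)} \leq V_N(z)$ after absorbing constants. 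This is precisely why the exponent $1 + 2/(\alpha-1)$ appears in the definition~\eqref{eq: V_N defn}.

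To finish, I would observe that $r \mapsto r^{1-\alpha}$ is convex on $(0,\infty)$ since $\alpha > 1$, and since the signs of the pairwise differences are preserved along $z_t$, the function $U_N$ is convex along the segment; composing with the convex increasing map $r \mapsto r^{1 + 2/(\alpha-1)}$ then shows $V_N$ is convex along $z_t$ as well. Hence $\int_0^1 V_N(z_t)\,dt \leq \tfrac{1}{2}(V_N(x) + V_N(y)) \leq V_N(x) + V_N(y)$, and combining with the Hessian integral closes the bound. The main obstacle is the combinatorial power-matching in the third paragraph: reconciling the exponent $-\alpha-1$ produced by the Hessian with the exponent $-\alpha+1$ appearing in $U_N$ is what forces both the exponent $1 + 2/(\alpha-1)$ and the $N^{2/(\alpha-1)}$ prefactor in $V_N$, and ultimately drives the polynomial-in-$N$ dependence in Theorem~\ref{thm: conv of particle Euler scheme}.
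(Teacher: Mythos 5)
Your proof is correct and reaches the same bound via a genuinely different route, though the two arguments share the crucial power inequality $\sum a_{kl}^w\leq(\sum a_{kl})^w$ for $w=(\alpha+1)/(\alpha-1)$. The paper works entry-by-entry: it controls $|\nabla U_N(x)-\nabla U_N(y)|$ by the $\ell^1$ sum over pairs $i<j$, applies the one-variable mean value theorem to each term $|x^i-x^j|^{-\alpha}-|y^i-y^j|^{-\alpha}$ to produce intermediate points $z_{ij}$, and then uses monotonicity of $r\mapsto r^{-\alpha-1}$ to bound $z_{ij}^{-\alpha-1}\leq|x^i-x^j|^{-\alpha-1}+|y^i-y^j|^{-\alpha-1}$ before invoking the power inequality. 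You instead integrate the full Hessian along the segment (using convexity of $D_N$), observe that $\nabla^2 U_N=\tfrac{\alpha}{N}(D-A)$ is a weighted graph Laplacian and therefore PSD so that its operator norm is bounded by its trace, convert the trace to a power of $U_N$ via the same inequality, and then use convexity of $V_N$ along the segment (via the trapezoid bound $\int_0^1 V_N(z_t)\,dt\leq\tfrac12(V_N(x)+V_N(y))$) to close. What your version buys: the graph-Laplacian observation gives a clean structural explanation for why the Hessian is dominated by a power of $U_N$, and the PSD/trace step replaces the pairwise MVT-plus-monotonicity bookkeeping; it also makes transparent, as you note, exactly how the exponent $1+2/(\alpha-1)$ and the $N^{2/(\alpha-1)}$ prefactor in $V_N$ are forced. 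What the paper's version buys: it avoids any appeal to operator norms, spectral positivity, or convexity of $V_N$, and the same one-variable MVT template transfers directly to the proof of Lemma~\ref{lem: lyapunov lipschitz interacting particles} for the second-order condition (where one would otherwise need to integrate a third derivative). Both are valid; one small remark is that you do not actually need the observation that $U_N$ is convex along the segment as a separate step, since PSD-ness of $\nabla^2 U_N$ on all of $D_N$ already gives convexity of $U_N$ on $D_N$, and composing with $r\mapsto c+cN^{2/(\alpha-1)}r^{1+2/(\alpha-1)}$ (convex, increasing on $[0,\infty)$) then gives convexity of $V_N$ on $D_N$ directly.
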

\begin{proof}
     First let $x,y \in D_N, x\not=y$ be arbitrary. First recall for $i=1,2,...,N$ one has
     \begin{equation}
\partial_i U_N(x)=\frac{1}{N}\biggr (\sum_{j=i+1,i+2,...,N}( x_j-x_i)^{-\alpha}-\sum_{j=1,2,...,i-1}( x_i-x_j)^{-\alpha}\biggr),
     \end{equation}
so
\begin{align}
    \lvert \nabla U_N(x)-\nabla U_N(y)\rvert&\leq \sum_{i=1}^N\lvert  \partial_i U_N(x)-\partial_i U_N(y) \rvert\nonumber \\
    &\leq \frac{c}{N}\sum_{1\leq i<j\leq N}\lvert (x_i-x_j)^{-\alpha}-(y_i-y_j)^{-\alpha}\rvert.
\end{align}
 Now observe that by the mean value theorem, for each $1\leq j<i\leq N$ there exists $z_{ij}\in \mathbb{R}$ lying inbetween $x_i-x_j$ and $y_i-y_j$ such that
 \begin{align}
 \lvert  \partial_i U_N(x)-\partial_i U_N(y) \rvert &\leq \frac{c}{N}\sum_{1\leq i<j\leq N}\lvert z_{ij}^{-\alpha-1}((x_i-x_j)-(y_i-y_j))\rvert\nonumber \\
  &\leq \lvert x-y\rvert\frac{c}{N}\sum_{1\leq i<j\leq N} z_{ij}^{-\alpha-1}\nonumber \\
  &\leq \lvert x-y\rvert\frac{c}{N}\sum_{1\leq i<j\leq N} ( ( x_j-x_i)^{-\alpha-1}+( y_j-y_i)^{-\alpha-1}).
 \end{align}
 The last bound follows from the monotonic nature of the function $x\mapsto x^{-\alpha-1}$ for $x>0$. Then the result follows since for every $x_1,...,x_m>0$ and $w>1$ one has
 \begin{equation}\label{eq: power expression}
  \sum_{i=1}^mx_i^w \leq  \biggr ( \sum_{i=1}^mx_i \biggr)^w .
 \end{equation}
\end{proof}
\begin{lemma}\label{lem: lyapunov lipschitz interacting particles}
    There exists a constant $c>0$ not depending on $N\in \mathbb{N}$ such that for every $x,y \in D_N$ one has
    \begin{equation}
       \lvert \nabla U_N(x)-\nabla U_N(y)-\nabla^2 U_N(x)(x-y)\rvert\leq (\hat{V}_N(x)+\hat{V}_N(y))\lvert x-y\rvert,
   \end{equation}
   for $\hat{V}_N$ as given in \eqref{eq: V_N hat defn}. 
\end{lemma}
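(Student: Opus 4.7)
The plan is to use the triangle inequality to write
\[
\lvert \nabla U_N(x) - \nabla U_N(y) - \nabla^2 U_N(x)(x-y)\rvert \leq \lvert \nabla U_N(x) - \nabla U_N(y)\rvert + \lvert \nabla^2 U_N(x)(x-y)\rvert,
\]
handle the first piece directly via Lemma \ref{lem: Lyapunov lipschitz}, and control the second piece by a direct operator-norm estimate on the Hessian $\nabla^2 U_N(x)$. Because the statement only asks for a bound linear in $\lvert x - y\rvert$ (rather than a second-order Taylor remainder), no mean value theorem on $\nabla U_N$ is required and the two pieces can be treated independently.

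For the first piece, Lemma \ref{lem: Lyapunov lipschitz} already yields $\lvert \nabla U_N(x) - \nabla U_N(y)\rvert \leq (V_N(x)+V_N(y))\lvert x - y\rvert$. Since the polynomial exponent appearing in $\hat V_N$ is strictly larger than the one in $V_N$ (by $1/(\alpha - 1)$), and the additive constant $c$ may be chosen $\geq 1$ to handle the regime where the interaction sum $S := \sum_{i<j}\lvert x_i - x_j\rvert^{1-\alpha}$ lies below $1$, we have $V_N(x) \leq c\,\hat V_N(x)$ pointwise on $D_N$. This immediately turns the first piece into a contribution of the required form $c(\hat V_N(x)+\hat V_N(y))\lvert x-y\rvert$.

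For the second piece I will compute the Hessian entries explicitly from the monomial form of $U$ in Assumption \ref{assmp: specific assmp}. Using $x \in D_N$ and hence the ordering $x_1 < \cdots < x_N$, one obtains $\partial_k\partial_\ell U_N(x) = -\tfrac{\alpha}{N}\lvert x_k - x_\ell\rvert^{-\alpha-1}$ for $k\neq \ell$, and $\partial_k^2 U_N(x) = \tfrac{\alpha}{N}\sum_{j\neq k}\lvert x_k - x_j\rvert^{-\alpha-1}$. Cauchy--Schwarz gives $\lvert \nabla^2 U_N(x)(x-y)\rvert \leq \lVert \nabla^2 U_N(x)\rVert_F \lvert x-y\rvert$, and bounding the Frobenius norm by the sum of absolute entries leaves a quantity of the form $\tfrac{c}{N}\sum_{1 \leq i<j\leq N}\lvert x_i-x_j\rvert^{-\alpha-1}$. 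Applying the power inequality \eqref{eq: power expression} with $a_{ij} := \lvert x_i - x_j\rvert^{1-\alpha}$ and exponent $w := (\alpha+1)/(\alpha-1) > 1$ gives
\[
\frac{1}{N}\sum_{1 \leq i<j \leq N}\lvert x_i - x_j\rvert^{-\alpha-1} \leq \frac{1}{N}\Bigl(\sum_{1 \leq i<j\leq N}\lvert x_i - x_j\rvert^{1-\alpha}\Bigr)^{(\alpha+1)/(\alpha-1)} \leq V_N(x),
\]
so that $\lvert \nabla^2 U_N(x)(x-y)\rvert \leq c\,V_N(x)\lvert x-y\rvert \leq c\,\hat V_N(x)\lvert x-y\rvert$ by the comparison above.

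Summing the two contributions yields the stated estimate. The main technical care lies in two places: first, verifying that the exponent $w=(\alpha+1)/(\alpha-1)$ is strictly greater than $1$ so that the power inequality \eqref{eq: power expression} is applicable (which uses $\alpha > 1$ from Assumption \ref{assmp: specific assmp}); and second, checking the comparison $V_N \leq c\,\hat V_N$ by splitting on whether $S \geq 1$ or $S < 1$ and absorbing the latter case into the additive constant. The Hessian calculation itself, while index-heavy, is routine thanks to the explicit monomial form of $U$.
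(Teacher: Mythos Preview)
Your argument is correct for the statement exactly as written, but you have proved the wrong thing. The exponent on $\lvert x-y\rvert$ in the displayed inequality is a typo: the paper's own proof establishes the bound with $\lvert x-y\rvert^2$ on the right-hand side, and this quadratic remainder is precisely what is needed to verify the second-order condition~\eqref{eq: higher order Lyapunov} of Assumption~\ref{assmp: Lyapunov controls nabla b} (which is then used to obtain the rate-$1$ convergence in Theorem~\ref{thm: conv of scheme 2}). A linear bound in $\lvert x-y\rvert$ is no stronger than Lemma~\ref{lem: Lyapunov lipschitz} and would not justify introducing $\hat V_N$ at all.

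Your triangle-inequality decomposition cannot be upgraded to yield $\lvert x-y\rvert^2$, because each of the two pieces $\lvert\nabla U_N(x)-\nabla U_N(y)\rvert$ and $\lvert\nabla^2 U_N(x)(x-y)\rvert$ is genuinely of order $\lvert x-y\rvert$ individually; the quadratic smallness only emerges from their cancellation. The paper instead treats the full Taylor remainder term-by-term: for each pair $i<j$ it applies the mean value theorem to the scalar map $u\mapsto u^{-\alpha}$ at second order, producing a factor $\tilde z_{ij}^{-\alpha-2}\,((x_i-x_j)-(y_i-y_j))^2$ with $\tilde z_{ij}$ between $x_i-x_j$ and $y_i-y_j$. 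Summing gives $\lvert x-y\rvert^2\cdot\tfrac{c}{N}\sum_{i<j}\bigl((x_i-x_j)^{-\alpha-2}+(y_i-y_j)^{-\alpha-2}\bigr)$, and the power inequality~\eqref{eq: power expression} with exponent $\tfrac{\alpha+2}{\alpha-1}$ then matches exactly the definition of $\hat V_N$. Note the exponent shift: your Hessian bound used $-\alpha-1$ and the exponent $\tfrac{\alpha+1}{\alpha-1}$ of $V_N$, whereas the genuine second-order remainder produces $-\alpha-2$ and the exponent $\tfrac{\alpha+2}{\alpha-1}$ of $\hat V_N$, which is why the larger Lyapunov function is introduced in the first place.
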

\begin{proof}
 Following the line of argument from the proof above one writes
   \begin{align}
       &\lvert \nabla U_N(x)-\nabla U_N(y)-\nabla^2 U_N(x)(x-y)\rvert\nonumber \\
       &\leq \frac{c}{N}\sum_{1\leq j<i\leq N}\lvert (x_i-x_j)^{-\alpha}-(y_i-y_j)^{-\alpha}+\alpha (x_i-x_j)^{-\alpha-1}((x_i-x_j)-(y_i-y_j))\rvert.\nonumber
   \end{align}
   Once more we apply the mean value theorem to find $\tilde{z}_{ij}$ between $x_i-x_j$ and $y_i-y_j$ such that 
     \begin{align}
       &\lvert \nabla U_N(x)-\nabla U_N(y)-\nabla^2 U_N(x)(x-y)\rvert\nonumber \\
       &\leq \frac{c}{N}\sum_{1\leq j<i\leq N}\tilde{z}_{ij}^{-\alpha-2}((x_i-x_j)-(y_i-y_j))^2\nonumber \\
       &\leq \lvert x-y\rvert^2\frac{c}{N}\sum_{1\leq j<i\leq N}\tilde{z}_{ij}^{-\alpha-2}\nonumber \\
       &\leq \lvert x-y\rvert^2\frac{c}{N}\sum_{1\leq j<i\leq N}((x_i-x_j)^{-\alpha-2}+(y_i-y_j)^{-\alpha-2}),
   \end{align}
   at which point the result follows from applying \eqref{eq: power expression} once more.
\end{proof}
Now we prove bounds that shall allow us to prove the Lyapunov condition \eqref{eq: generator assmp}. To this end let us define
\begin{equation}\label{eq: l defn}
        l(x):=\min\{\min_{j=2,...,N} ( x^j-x^{j-1}),1\}.
    \end{equation}
    The additional minimum here is to enforce that $l(x)\leq 1$.
\begin{lemma}\label{lem: bounds in terms of min}
    Assume Assumption \ref{assmp: specific assmp} and consider $U_N$ as given in \eqref{eq: U_N defn}.  Then for $x\in D$ one has
\begin{equation}\label{eq: bounds on U_N}
      cN^{-1}l(x)^{1-\alpha} \leq U_N(x)\leq cNl(x)^{1-\alpha},
    \end{equation}
    \begin{equation}\label{eq: bounds on nabla U_N}
      cN^{-3/2}l(x)^{-\alpha} \leq \lvert \nabla U_N(x)\rvert \leq  cl(x)^{-\alpha},
    \end{equation}
    \begin{equation}\label{eq: bounds on Delta U_N}
      \lvert \Delta U_N(x)\rvert \leq  cl(x)^{-\alpha-1},
    \end{equation}
    \begin{equation}
        \langle \nabla U_N, \nabla Q_N\rangle\leq cl(x)^{1-\alpha}.
    \end{equation}
\end{lemma}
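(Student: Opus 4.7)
The proof hinges on the fundamental ordering inequality: for $x \in D_N$ and $i<j$, one has $x^j - x^i = \sum_{k=i+1}^{j}(x^k - x^{k-1}) \geq (j-i)\, l(x)$. This reduces every sum over particle pairs to a numerical series of the form $\sum_{k \geq 1} k^{-\beta}$, convergent whenever $\beta > 1$.

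For the bounds on $U_N$, I start from $U_N(x) = \frac{1}{N(\alpha-1)} \sum_{i < j} (x^j - x^i)^{1-\alpha}$. The upper bound follows by replacing each summand by $((j-i)\, l(x))^{1-\alpha}$ (an overestimate, since $1-\alpha < 0$); since $(j-i)^{1-\alpha} \leq 1$, the resulting sum has at most $\binom{N}{2}\leq cN^2$ terms, giving $U_N(x) \leq cN\, l(x)^{1-\alpha}$. The lower bound is obtained by retaining only the pair $(i_0, i_0+1)$ at which the minimum gap $l(x)$ is achieved.

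For $|\nabla U_N|$, the componentwise identity $\partial_k U_N = \frac{1}{N}\bigl[\sum_{j > k}(x^j - x^k)^{-\alpha} - \sum_{i < k}(x^k - x^i)^{-\alpha}\bigr]$, combined with the ordering inequality, gives $|\partial_k U_N| \leq \frac{c\, l(x)^{-\alpha}}{N}\sum_{m \geq 1} m^{-\alpha}$, which is finite since $\alpha > 1$. Taking the $\ell^2$-norm over $N$ coordinates yields $|\nabla U_N| \leq c N^{-1/2} l(x)^{-\alpha} \leq c\, l(x)^{-\alpha}$. The lower bound is the \textbf{main obstacle}: the naive estimate $|\nabla U_N| \geq |\partial_{i_0} U_N|$ can fail because of cancellation among terms of opposite signs in the sum defining $\partial_{i_0} U_N$. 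I would instead exploit the directional inequality $|\nabla U_N| \geq |\partial_{i_0}U_N - \partial_{i_0+1}U_N|/\sqrt{2}$: the difference $\partial_{i_0}U_N - \partial_{i_0+1}U_N$ explicitly contains the doubled singular term $2l(x)^{-\alpha}/N$ plus a remainder from the interaction with other particles, which one controls by the mean value theorem and convergence of $\sum_m m^{-\alpha}$; the extra $N^{-1/2}$ slack in the target exponent $N^{-3/2}$ suggests that a relatively crude accounting of the remainder will suffice.

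Finally, $\Delta U_N = \frac{2\alpha}{N}\sum_{i<j}(x^j - x^i)^{-\alpha-1}$ is controlled term by term via $((j-i)\, l(x))^{-\alpha-1}$ and convergence of $\sum_k k^{-\alpha-1}$, which gives the bound $c\,l(x)^{-\alpha-1}$ after division by $N$. For the cross term, the antisymmetric structure of $\nabla U_N$ yields
\[
\langle \nabla U_N, \nabla Q_N \rangle = \frac{1}{N^2} \sum_{i<j} (x^j - x^i)^{-\alpha}\bigl(Q'(x^i) - Q'(x^j)\bigr),
\]
and applying the Lipschitz bound $|Q'(x^i)-Q'(x^j)| \leq L(x^j-x^i)$ cancels exactly one power of the singularity, reducing the inner product to a multiple of $(\alpha-1)U_N(x)/N$, which is already controlled by the first part of the lemma. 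The sharp lower bound on $|\nabla U_N|$ remains the one step requiring genuine care.
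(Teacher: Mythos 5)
Your treatment of the bounds on $U_N$, the upper bound on $|\nabla U_N|$ (including the sharper $N^{-1/2}$ version, which is fine and even a bit stronger than the paper's statement), the bound on $\Delta U_N$, and the cross term $\langle \nabla U_N, \nabla Q_N\rangle$ all track the paper's proof and are correct.

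The lower bound on $|\nabla U_N|$ is, however, a genuine gap, and the heuristic that ``the $N^{-1/2}$ slack suggests a crude accounting will suffice'' is false. Your proposed quantity $\partial_{i_0}U_N-\partial_{i_0+1}U_N$ does isolate a singular term $2l(x)^{-\alpha}/N$, but every remainder contribution from the other particles enters with the \emph{opposite} sign, and the total cancellation is not merely of order one. Take $x^k=kl_0$ with one gap perturbed infinitesimally so that the minimum is uniquely attained near $m\sim N/2$. Then a telescoping computation gives
\[
\partial_m U_N-\partial_{m+1}U_N=\frac{l_0^{-\alpha}}{N}\bigl[(N-m)^{-\alpha}+m^{-\alpha}\bigr]\sim c\,l_0^{-\alpha}\,N^{-1-\alpha},
\]
which is strictly below the target $N^{-3/2}l_0^{-\alpha}$ for every $\alpha>1/2$. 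So the cancellation costs a factor $N^{\alpha}$, not $N^{1/2}$, and no amount of care in estimating the remainder via the mean value theorem rescues this route. (The lower bound itself is still true in this example --- $|\partial_1 U_N|\sim l_0^{-\alpha}/N$ --- but that is invisible from your chosen projection.)

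The paper avoids cancellation altogether with a different test functional. Let $i_0$ be the index with $x^{i_0}-x^{i_0-1}=l(x)$ and set $\xi\in\{-1,1\}^N$ with $\xi_k=1$ for $k<i_0$ and $\xi_k=-1$ for $k\geq i_0$. Regrouping over pairs one finds
\[
\langle\nabla U_N,\xi\rangle=\frac{1}{N}\sum_{i<j}(x^j-x^i)^{-\alpha}(\xi_i-\xi_j)=\frac{2}{N}\sum_{i<i_0\leq j}(x^j-x^i)^{-\alpha},
\]
a sum in which \emph{every} term is non-negative, so dropping all but the pair $(i_0-1,i_0)$ gives $\langle\nabla U_N,\xi\rangle\geq 2l(x)^{-\alpha}/N$. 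Since $|\xi|=\sqrt{N}$, Cauchy--Schwarz yields $|\nabla U_N|\geq 2N^{-3/2}l(x)^{-\alpha}$. The missing idea is precisely this sign-aware step vector, which turns the pairwise sum into one with constructive interference rather than pairing a coordinate against its neighbour, where the other $O(N)$ particles can (and do) destroy the singular term.
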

\begin{proof}
    Let $x\in D$. We begin by showing bounds on $U_N$. For the upper bound observe that for $j>i$ one has 
    \begin{equation}\label{eq: difference particles}
        x^j-x^i\geq (j-i)l(x),
    \end{equation}
    and therefore
    \begin{align}
    U_N(x)&=\frac{1}{(\alpha-1)N}\sum_{1\leq i<j\leq N}(x^j-x^i)^{-\alpha+1}\nonumber \\
    &\leq \frac{l(x)^{-\alpha+1}}{(\alpha-1)N}\sum_{1\leq i<j\leq N}(j-i)^{-\alpha+1}.
\end{align}
Then bounding each term in the sum by $1$, since there are $cN^2$ terms, the upper bound follows. Now for our fixed $x\in D$ let $i\in \{2,...,N\}$ be the index such that
     \begin{equation}
   x_i-x_{i-1}=l(x).
    \end{equation}
    Then the lower bound on $U_N$ follows by considering only the term in the sum that achieves this minimum.

    For $\nabla U_N$, let be $i$ as given above, and consider the vector $\xi\in \mathbb{R}^N$ such that $\xi_k=-1$ for $k\geq i$ and $\xi_k=1$ for $k<i$. Then the product $  \langle \nabla U(x), \xi\rangle$ will only contain terms $(x_{k_1}-x_{k_2})^{-\alpha}$ for indices $k_2\leq i\leq k_1$, $k_1\not=k_2$. As a result
    \begin{equation}
        \langle \nabla U_N(x), \xi\rangle \geq \frac{1}{N}(x_i-x_{i-1})^{-\alpha}=\frac{l(x)^{-\alpha}}{N}.
    \end{equation}
    Then since $\lvert \xi\rvert=N^{1/2}$, by Cauchy-Schwarz the lower bound on $\lvert \nabla U_N(x)\rvert$ follows. For the upper bound observe that by \eqref{eq: difference particles}
   \begin{equation}
       \lvert \nabla U_N(x)\rvert \leq N^{-1}\sum_{1\leq j<i\leq N}(x_i-x_j)^{-\alpha-1}\leq c\frac{l(x)^{-\alpha}}{N}\sum_{i=1}^N\sum_{j=i+1}^N(j-i)^{-\alpha},
   \end{equation}
   and so the result follows since $\sum_{i=1}^\infty i^{-\alpha}\leq c$. The bound for $\Delta U_N$ follows in the same way. Finally
   \begin{align}
       \langle \nabla U_N, \nabla Q_N\rangle &\leq c N^{-2}\sum_{1\leq j<i\leq N}( x^i-x^j)^{-\alpha}\lvert \nabla Q(x^i)-\nabla Q(x^j)\rvert \nonumber \\
       &\leq c N^{-2}\sum_{1\leq j<i\leq N}( x^i-x^j)^{1-\alpha},
   \end{align}
   so one obtains the result from \eqref{eq: difference particles}.
\end{proof}
To show the actual Lyapunov condition \eqref{eq: generator assmp} holds we shall need the following technical lemma
\begin{lemma}\label{lem: maximum of poly}
    Let $p>q>0$ and $a,b>0$. Then there exists a constant $c=c(p,q)>0$ such that
    \begin{equation}
        \sup_{t>0}(-at^{-p}+bt^{-q})\leq b\biggr (\frac{bq}{ap}\biggr)^{\frac{q}{p-q}}\,.
    \end{equation}
\end{lemma}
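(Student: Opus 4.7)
\medskip

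\noindent\textbf{Proof plan for Lemma \ref{lem: maximum of poly}.} The plan is to use single-variable calculus on the function
\begin{equation*}
f(t) := -at^{-p} + bt^{-q}, \qquad t>0,
\end{equation*}
since this is a smooth function of one variable with easily analyzed boundary behavior. As $t\to 0^+$ the term $-at^{-p}$ dominates (because $p>q$) and drives $f$ to $-\infty$, while as $t\to\infty$ both terms vanish; hence the supremum is attained at an interior critical point.

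First I would compute $f'(t) = a p t^{-p-1} - b q t^{-q-1}$ and solve $f'(t)=0$, which gives the unique critical point
\begin{equation*}
t_{*} = \left(\frac{ap}{bq}\right)^{1/(p-q)}.
\end{equation*}
A quick second-derivative check (or the boundary analysis above) confirms this is a maximum. Then I would substitute back: using the identity $p/(p-q) = q/(p-q)+1$, one has $(t_*)^{-p} = (bq/(ap))^{q/(p-q)} \cdot (bq/(ap))$ and $(t_*)^{-q} = (bq/(ap))^{q/(p-q)}$, so
\begin{equation*}
f(t_{*}) = \left(\frac{bq}{ap}\right)^{q/(p-q)}\!\left[-a\cdot\frac{bq}{ap} + b\right] = \frac{b(p-q)}{p}\left(\frac{bq}{ap}\right)^{q/(p-q)}.
\end{equation*}

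Finally, since $p>q>0$ implies $(p-q)/p \in (0,1)$, this exact value of the supremum is bounded above by $b(bq/(ap))^{q/(p-q)}$, which is exactly the stated inequality. There is no real obstacle here beyond the bookkeeping of the exponents $q/(p-q)$ and $p/(p-q)$; the statement is deliberately slightly loose (dropping the factor $(p-q)/p$) so that the dependence on $p,q$ collects cleanly into a single constant $c(p,q)$, which is what will be convenient when the lemma is invoked in the sequel.
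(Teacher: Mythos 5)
Your proposal is correct and follows essentially the same route as the paper: both locate the unique critical point $t_* = (ap/(bq))^{1/(p-q)}$ by elementary calculus and then obtain the stated bound by discarding a nonpositive contribution (the paper drops the term $-at_*^{-p}$ directly, while you factor first and drop the equivalent factor $(p-q)/p<1$). Your observation that the $c(p,q)$ in the statement is vestigial is also consistent with the paper, whose proof never invokes it.
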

\begin{proof}
    By single variable calculus one sees that $f(t)=-at^{-p}+bt^{-q}$ has a positive minimum point at
    \begin{equation}
        t=\biggr(\frac{ap}{bq}\biggr)^{\frac{1}{p-q}}.
    \end{equation}
    The result then follows from substituting this extreme into $f$ and ignoring the first (negative) term.
\end{proof}
\begin{lemma}\label{lem: assump for interacting particles}
    Assume Assumption \ref{assmp: specific assmp}. Then for every $p>0$ there exist $q_p,\hat{q}_p>1$ which are decreasing in $p>0$ and independent of $N\in \mathbb{N}$, such that
    \begin{equation}
     \sup_{x\in D_N} ( \mathcal{L} V_N^p)(x)+\frac{1}{q_p}\lvert \nabla (V_N^p)(x)\rvert^{q_p}\leq a_p,
\end{equation}
    \begin{equation}
     \sup_{x\in D_N} ( \mathcal{L} \hat{V}_N^p)(x)+\frac{1}{\hat{q}_p}\lvert \nabla (\hat{V}_N^p)(x)\rvert^{\hat{q}_p}\leq \hat{a}_p.
\end{equation}
Furthermore these constants satisfy $a_p,\hat{a}_p=poly(N)$ and $(a_q)^{p/q}\leq a_p$, $(\hat{a}_q)^{p/q}\leq \hat{a}_p$ for all $0<q<p$.
\end{lemma}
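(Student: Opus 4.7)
The plan is to verify the generator inequality by a direct chain rule computation, reducing every term in $\mathcal{L}(V_N^p)+\frac{1}{q_p}|\nabla (V_N^p)|^{q_p}$ to a power of $l(x)$ and a polynomial factor in $N$ via Lemma \ref{lem: bounds in terms of min}, and then showing that the singular negative drift term dominates all competing positive terms. With $m:=(\alpha+1)/(\alpha-1)$ and $\gamma:=2/(\alpha-1)$, the chain rule yields
\begin{equation*}
\nabla(V_N^p)=p V_N^{p-1}N^\gamma m\,U_N^{m-1}\nabla U_N,
\end{equation*}
so the drift part of $\mathcal{L}(V_N^p)$ produces a dominant negative contribution $-pmN^\gamma V_N^{p-1}U_N^{m-1}|\nabla U_N|^2$, plus a cross term in $\langle \nabla U_N,\nabla Q_N\rangle$; the Brownian part contributes $\tfrac{\sigma}{N}[pV_N^{p-1}\Delta V_N+p(p-1)V_N^{p-2}|\nabla V_N|^2]$.

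Next I would use the bounds of Lemma \ref{lem: bounds in terms of min} to translate these into powers of $l(x)^{-1}$. A short bookkeeping with $U_N\sim l^{-(\alpha-1)}$, $|\nabla U_N|^2\gtrsim N^{-3}l^{-2\alpha}$, $|\Delta U_N|\lesssim l^{-\alpha-1}$, and $\langle\nabla U_N,\nabla Q_N\rangle\lesssim l^{1-\alpha}$, gives the scalings (up to $\mathrm{poly}(N)$ factors)
\begin{equation*}
-pmN^\gamma V_N^{p-1}U_N^{m-1}|\nabla U_N|^2\;\lesssim\;-\mathrm{poly}(N)^{-1}\,l^{-(\alpha+1)(p+1)},
\end{equation*}
while both pieces of the Laplacian scale like $N^{-1}l^{-(\alpha+1)p-2}$ and the $\nabla Q_N$ cross term scales like $l^{-(\alpha+1)p}$. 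Since $\alpha+1>2$ for $\alpha>1$, the negative term is strictly more singular as $l\to 0$. Finally, $|\nabla(V_N^p)|^{q_p}$ scales as $l^{-((\alpha+1)p+1)q_p}$, so choosing
\begin{equation*}
q_p:=\frac{(\alpha+1)(p+1)+1}{(\alpha+1)p+1}\wedge\Bigl(1+\tfrac{1}{p}\Bigr),
\end{equation*}
(or any similar explicit choice strictly below $(\alpha+1)(p+1)/((\alpha+1)p+1)$) ensures $q_p>1$ is decreasing in $p$, independent of $N$, and $|\nabla(V_N^p)|^{q_p}$ remains strictly dominated by the negative drift term at $l\to 0$.

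Collecting all contributions one arrives at an inequality of the form
\begin{equation*}
\mathcal{L}(V_N^p)(x)+\tfrac{1}{q_p}|\nabla(V_N^p)(x)|^{q_p}\;\leq\;-A\,l(x)^{-r_1}+B\,l(x)^{-r_2},
\end{equation*}
with $r_1>r_2>0$ and $A,B=\mathrm{poly}(N)$, uniformly in $x\in D_N$. Applying Lemma \ref{lem: maximum of poly} to the right-hand side as a function of $l(x)\in(0,\infty)$ gives a finite supremum which is polynomial in $N$; this defines $a_p=\mathrm{poly}(N)$. The requirement $(a_q)^{p/q}\leq a_p$ is then arranged by inflating $a_p$ to an explicit template of the form $a_p=C(p)N^{K(p)}$ with $C(p),K(p)$ non-decreasing and log-convex in $p$, which is compatible with the estimate since the scaling of each term is at most polynomial in both $p$ and $N$.

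The statement for $\hat V_N$ follows from the very same computation after replacing $m$ by $m':=(\alpha+2)/(\alpha-1)$: the analogous dominant negative term now scales like $l^{-(\alpha+1)p-(\alpha+2)}$, the residual positive terms remain of the same (lower) order, and one picks $\hat q_p$ by the same procedure. The main obstacle is purely one of bookkeeping: one must verify that the specific exponents $m=(\alpha+1)/(\alpha-1)$ and $m'=(\alpha+2)/(\alpha-1)$ in the definitions of $V_N$ and $\hat V_N$ were chosen precisely so that the negative drift outpaces both the Laplacian and the $|\nabla(V_N^p)|^{q_p}$ correction uniformly in $N$, and that this margin leaves an explicit $q_p>1$, decreasing with $p$, independent of $N$.
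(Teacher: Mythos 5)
Your approach is essentially the paper's: compute $\mathcal{L}_N V_N^p$ by the chain rule, convert each term to a power of $l(x)^{-1}$ times a polynomial in $N$ via Lemma~\ref{lem: bounds in terms of min}, observe that $-pV_N^{p-1}|\nabla V_N|^2$ is the most singular term, and close with Lemma~\ref{lem: maximum of poly}. The paper organizes the bookkeeping by working with the parametrized family $\bar V_N := N^r U_N^s$ and choosing $q$ (formula~\eqref{eq: q defn}) precisely so that all positive $l$-exponents coincide, which gives one clean formula for both $q_p$ and $\hat q_p$ upon substituting the two choices of $(r,s)$; your version treats $V_N$ and $\hat V_N$ separately and allows the positive exponents to differ (then bounds them by the largest using $l\le 1$). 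Both are fine.

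There is, however, a concrete error in your explicit candidate for $q_p$. You correctly identify that one needs $((\alpha+1)p+1)q_p < (\alpha+1)(p+1)$, i.e. $q_p < \frac{(\alpha+1)(p+1)}{(\alpha+1)p+1}$, but both arguments in your proposed minimum exceed this threshold: $\frac{(\alpha+1)(p+1)+1}{(\alpha+1)p+1}$ is larger by construction, and cross-multiplying shows $1+\tfrac1p=\tfrac{p+1}{p}>\tfrac{(\alpha+1)(p+1)}{(\alpha+1)p+1}$ as well (the difference of the cross-products is $p+1>0$). With your stated $q_p$ the $\tfrac1{q_p}|\nabla(V_N^p)|^{q_p}$ term would be at least as singular as the negative drift and Lemma~\ref{lem: maximum of poly} would not apply. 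A valid choice is $q_p=\frac{(\alpha+1)p+2}{(\alpha+1)p+1}$, which is exactly what \eqref{eq: q defn} gives for $s=(\alpha+1)/(\alpha-1)$: it is $>1$, decreasing in $p$, independent of $N$, and strictly below the threshold precisely because $\alpha>1$. A smaller slip of the same type appears for $\hat V_N$: with $m'=(\alpha+2)/(\alpha-1)$ the dominant negative term scales like $l^{-(\alpha+2)p-(\alpha+1)}$, not $l^{-(\alpha+1)p-(\alpha+2)}$ as written (the product $\hat V_N^{p-1}\cdot U_N^{m'-1}|\nabla U_N|^2$ gives exponent $-(\alpha+2)(p-1)-3-2\alpha$), though this does not change the conclusion that it dominates.
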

\begin{proof}
Recall that for the proof of this lemma the generic constant $c>0$ shall \textbf{not} depend on $p>0$, so as to show the final condition on the constants $a_p>0$ given in the lemma. Let $\mathcal{L}_N$ be the generator of \eqref{eq: key IPS}. Then one calculates for $U\in C^2(D_N)$ that
\begin{align}
    \mathcal{L}_N U^p&=-pU^{p-1}\lvert \nabla U\rvert^2+pU^{p-1}\langle \nabla U, \nabla Q_N\rangle\nonumber \\
    &+ p U^{p-1}\Delta U+p(p-1)U^{p-2}\lvert \nabla U\rvert^2 .
\end{align}
Using Lemma \ref{lem: bounds in terms of min} then yields that for $p\geq 2$
\begin{equation}
 -pU_N^{p-1}\lvert \nabla U_N\rvert^2\leq  -cpN^{-p-2}l^{(p-1)(1-\alpha)-2\alpha},
\end{equation}
\begin{equation}
pU_N^{p-1}\langle \nabla U_N, \nabla Q_N\rangle\leq cpN^{p-1}l^{(p-1)(1-\alpha)+1-\alpha},
\end{equation}
\begin{equation}
    p U_N^{p-1}\Delta U_N\leq cpN^{p-1}l^{(p-1)(1-\alpha)-1-\alpha},
\end{equation}
\begin{equation}
p(p-1)U_N^{p-2}\lvert \nabla U_N\rvert^2\leq cp(p-1)N^pl^{(p-1)(1-\alpha)-1-\alpha}.
\end{equation}
Therefore, since $0\leq l\leq 1$ by it's definition in \eqref{eq: l defn}, one may bound $l^{-r_1}\leq l^{-r_2}$ for $r_1>r_2$. Additionally, since $\alpha>1$ one obtains
\begin{align}
    \mathcal{L}_N U_N^p&\leq -cpN^{-p-2}l^{(p-1)(1-\alpha)-2\alpha}+cp(p-1)N^pl^{(p-1)(1-\alpha)-1-\alpha} .
\end{align}
Furthermore, one has
\begin{equation}
    \lvert \nabla (U_N)^p\rvert^q=p\lvert U_N\rvert^{q(p-1)}\lvert \nabla U_N\rvert^q\leq cpN^{q(p-1)}l^{q[(p-1)(1-\alpha)-\alpha]}.
\end{equation}
Now let us fix $r,s>0$ and write $\bar{V}_N:=N^rU_N^s$. Then letting $p\geq 2/s$ and setting 
\begin{equation}\label{eq: q defn}
    q=\frac{(ps-1)(\alpha-1)+1+\alpha}{(ps-1)(\alpha-1)+\alpha},
\end{equation}
one obtains
\begin{align}
   & \mathcal{L}_N \bar{V}_N^p+ \frac{1}{q}\lvert \nabla (\bar{V}_N)^p\rvert^q\leq -cpsN^{rp-p-2}l^{(ps-1)(1-\alpha)-2\alpha}\nonumber \\
    &+ cps(ps-1)N^{rpq+q(ps-1)}l^{(ps-1)(1-\alpha)-\alpha-1}.
\end{align}
This is clearly bounded by Lemma \ref{lem: maximum of poly}, and one may derive the relevant expressions for $V_N$ and $\hat{V}_N$ by substituting in firstly $r=\frac{2}{\alpha-1}$, $s=1+\frac{2}{\alpha-1}$ and secondly $r=\frac{3}{\alpha-1}$, $s=1+\frac{3}{\alpha-1}$, along with the corresponding values of $q_p>1$ and $\hat{q}_p>1$ coming from \eqref{eq: q defn}. Furthermore, the resulting dependence on $N\in \mathbb{N}$ is clearly polynomial. To check that the final condition holds, one notes that the dependence on $N\in \mathbb{N}$ is super-polynomial in $N\in \mathbb{N}$, so that such a domination condition must hold.
\end{proof}

\subsection{Proof of Theorem \ref{thm: conv of particle Euler scheme}}
The result now follows simply by combining the results of the proceeding section and checking the constants have only polynomial dependence on $N\in \mathbb{N}$. To this end we merely need to check Assumption \ref{assmp: Lyapunov fn} and Assumption \ref{assmp: Lyapunov controls nabla b} hold for each $N\in \mathbb{N}$ and apply Theorem \ref{thm: conv of scheme 2}. Let us being with Assumption \ref{assmp: Lyapunov fn}. 
\begin{itemize}
    \item the set $D_N\subset \mathbb{R}$, drift coefficient $b:D_N\to\mathbb{R}^N$ and function $V_N:D_N\to [0,\infty)$ are given in Section \ref{subsec: connection general framework}
    \item the fact \eqref{eq: Lyapunov Lipschitz b} holds for $V_N$ and $b_N$ is the content of Lemma \ref{lem: Lyapunov lipschitz}
    \item the fact $V_N,\hat{V}_N:D_N\to [0,\infty)$ obey a condition of the form \eqref{eq: generator assmp}, and that the constants scale polynomially with $N\in \mathbb{N}$, is the content of Lemma \ref{lem: assump for interacting particles}. This particularly implies $b_p=\hat{b}_p=0$ for all $p\geq 2$ in this setting
    \item one sees that for every sequence $x_n\in D_N$ such that $x_n\to x\in \partial D_N$ one has $V_N(x_n),\hat{V}_N(x_n)\to \infty$, since this implies $(x_n)_i\to (x_n)_{i+1}$ for some $i=2,...,N$
    \item the condition \eqref{eq: nabla V power bound} follows from Lemma \ref{lem: bounds in terms of min} and the definition of $V_N$ and $\hat{V}_N$
\end{itemize}
It remains to prove the monotonicity condition \eqref{eq: mon assump} on the full drift coefficient, for a constant $\mu>0$ \textbf{independent} of $N\in \mathbb{N}$. Since $Q'$ is Lipschitz one notes that it is sufficient to show for $u_N^{ij}(x):=\lvert x^i-x^j\rvert^{1-\alpha}$
    \begin{equation}
        \langle \nabla u^{ij}_N(x)-\nabla u^{ij}_N(y),x-y\rangle\leq 0.
    \end{equation}
    Indeed, one has
    \begin{equation}
        \langle \nabla u^{ij}_N(x)-\nabla u^{ij}_N(y),x-y\rangle = (1-\alpha)(\lvert x^i-x^j\rvert^{-\alpha}-\lvert y^i-y^j\rvert^{-\alpha})((x^i-x^j)-(y^i-y^j)).
    \end{equation}
    The result then follows since $x\mapsto x^{-\alpha}$ is decreasing for $x>0$. It is easy to verify that, given the assumptions on $x_{0,N}$, the generic constants $H_p$ and $\hat{H}_p$ all have polynomial dependence on $N\in \mathbb{N}$. Therefore the result follows.

    \section{Recovering Tamed Schemes}\label{sec: recovering tamed schemes}
Let us consider a context common in the tamed scheme literature
\begin{assmp}\label{assmp: tamed schemes}
    Suppose $b:\mathbb{R}^d\to\mathbb{R}^d$ is such that there exists $l>0$ for which \eqref{eq: tamed assmp} holds, and furthermore $b\in C^1(\mathbb{R}^d)$ and one has for every $x,y\in \mathbb{R}^d$ that
  \begin{equation}
        \langle  b(x)- b(x) -\nabla b(x)( x-y)\leq c(1+\lvert x \rvert^{l+1}+\lvert y\rvert^{l+1})\lvert x -y\rvert^2.
    \end{equation}  
    \begin{equation}
        \langle b(x)-b(y), x-y\rangle\leq c\lvert x -y\rvert^2.
    \end{equation}
\end{assmp}
Then we have the following result which essentially recovers (with a different scheme) the result of \cite{articletamed}.
\begin{thm}\label{thm: recovering tamed scheme}
    Let $V(x):=c(1+\lvert x \rvert^l)$, $D=\mathbb{R}^d$ and consider the resulting tamed scheme given by \eqref{eq: truncated coef} and \eqref{eq: scheme defn}. Then for every $T>0$ and $p\geq 2$ there exists $c_*>0$ independent of $n\geq 1$ such that
    \begin{equation}
        E\sup_{t\in [0,T]}\lvert X^n_t-X_t\rvert^p \leq c_*n^{-p}.
    \end{equation}
\end{thm}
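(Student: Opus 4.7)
The plan is to verify Assumptions \ref{assmp: Lyapunov fn} and \ref{assmp: Lyapunov controls nabla b} for the choices $D=\mathbb{R}^d$, $V(x)=c(1+|x|^l)$ and $\hat V(x)=c(1+|x|^{l+1})$ (smoothed near the origin if $l<2$ so that $V,\hat V\in C^2$), and then invoke Theorem \ref{thm: conv of scheme 2}. The Lipschitz-type bound \eqref{eq: Lyapunov Lipschitz b} follows directly from \eqref{eq: tamed assmp} by absorbing $L$ into $c$; the boundary condition \eqref{eq: explosion on boundary} is vacuous since $\partial D=\emptyset$; \eqref{eq: nabla V power bound} reduces to a routine polynomial estimate; and the monotonicity \eqref{eq: mon assump} is the final item of Assumption \ref{assmp: tamed schemes}.

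The main computation is the Lyapunov drift estimate \eqref{eq: generator assmp}. First I would extract the coercivity bound $\langle x, b(x)\rangle\leq c(1+|x|^2)$ by setting $y=0$ in the monotonicity and absorbing $|b(0)|$. A direct chain-rule computation, using $|\nabla V|\lesssim |x|^{l-1}$, then gives both $\langle \nabla V^p, b\rangle\leq c(1+V^p)$ and $\Delta V^p\lesssim |x|^{lp-2}\leq c(1+V^p)$, so that $\mathcal L V^p\leq c(1+V^p)$. The penalty term is tamed by choosing
\begin{equation*}
q_p=\frac{lp}{lp-1}=1+\frac{1}{lp-1},\qquad q_p'=lp,
\end{equation*}
which is strictly greater than $1$ and decreasing in $p$ as required, and under which $|\nabla V^p|^{q_p}\sim |x|^{lp}\lesssim 1+V^p$. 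The scaling conditions on $a_p,b_p$ in Definition \ref{defn: generic constant} are then secured by taking them monotone in $p$. Assumption \ref{assmp: Lyapunov controls nabla b} is verified identically for $\hat V$, the higher-order smoothness bound \eqref{eq: higher order Lyapunov} being provided exactly by the second displayed equation of Assumption \ref{assmp: tamed schemes}.

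Applying Theorem \ref{thm: conv of scheme 2} then yields $\sup_{t\in[0,T]}E|X^n_t-X_t|^p\leq c_*n^{-p}$ for $p\geq 4$, from which the range $p\in[2,4)$ follows by Jensen's inequality. To upgrade from $\sup_t E$ to $E\sup_t$ as required by the theorem statement, I would exploit that $X_t$ and $X^n_t$ are driven by the \emph{same} Brownian motion, so the error $e_t=X_t-X^n_t$ satisfies an ODE with no martingale part. Following the splitting \eqref{eq: final splitting} from the proof of Theorem \ref{thm: conv of scheme}, the It\^o-monotonicity estimate for $I^1_t$ becomes a pathwise Gr\"onwall inequality, while the stopping-time terms $I^2_t,I^3_t$ are controlled via the Langlart inequality and the $E\sup$ moment bounds already established in Lemma \ref{lem: moment bounds scheme uniform full intevral}. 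The main obstacle is bookkeeping: verifying that the constants $q_p,q_p',a_p,\hat a_p$ satisfy the monotonicity requirements of Definitions \ref{defn: generic constant}--\ref{defn: generic constant 2}, and tracking that $c_*$ is genuinely independent of $n$, which holds once $n\geq\max\{V(x_0)^3,\hat V(x_0)^3\}$.
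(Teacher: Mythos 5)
Your proposal follows the same route as the paper's (very terse) own proof: verify Assumptions \ref{assmp: Lyapunov fn} and \ref{assmp: Lyapunov controls nabla b} for $V(x)=c(1+|x|^l)$, $\hat V(x)=c(1+|x|^{l+1})$ on $D=\mathbb{R}^d$, reduce the drift condition \eqref{eq: generator assmp} to the coercivity $\langle b(x),x\rangle\leq c(1+|x|^2)$ that monotonicity gives, and invoke Theorem \ref{thm: conv of scheme 2}. You are in fact more careful than the paper on several points it silently glosses over — you note that $|x|^l$ must be smoothed near the origin when $l<2$ so that $V\in C^2$, you supply the explicit exponent $q_p=lp/(lp-1)$ and check it is $>1$ and decreasing, you handle $p\in[2,4)$ via Jensen since Theorem \ref{thm: conv of scheme 2} is stated for $p\geq 4$, and you observe that Theorem \ref{thm: conv of scheme 2} only delivers $\sup_t E$ while the statement here asserts $E\sup_t$, proposing the natural fix (the error $e_t$ is martingale-free, so a pathwise Gr\"onwall controls the main term, with Lenglart for the stopping-time remainders).
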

\begin{proof}
    It suffices to show that Assumptions \ref{assmp: Lyapunov fn} and \ref{assmp: Lyapunov controls nabla b} hold. By the monotonicity assumption it follows that
    \begin{equation}
\langle b(x), x\rangle\leq c(1+\lvert x \rvert^2),
\end{equation}
from which it follows that \eqref{eq: generator assmp} holds for every $p>0$ some $q_p,a_p,b_p>0$. Furthermore, we can choose these coefficients to grow fast enough to satisfy the monotonicity requirements. The other conditions hold trivially given the assumptions. Furthermore, Assumption \ref{assmp: Lyapunov controls nabla b} holds choosing $\hat{V}(x):=c(1+\lvert x \rvert^{l+1})$.
\end{proof}

\appendix
\section{Itô's Formula}

We begin with a statement of Itô's formula for a function defined on an open set, that could potentially blow up on the boundary of this set. This result is essentially a restatement of Proposition 1 in \cite{JOHNSTON2026104772}.
\begin{prop}\label{prop: local Ito}
    Let $(W_t)_{t\geq 0}$ be a Wiener martingale defined on a filtered probability space $(\Omega, P, \mathcal{F}, (\mathcal{F}_t)_{t\geq 0})$. Let $(Y_t)_{t\geq 0}$ be a $\mathcal{F}_t$-measurable process satisfying
    \begin{equation}
        Y_t=Y_0+\int^t_0 g_s ds+\beta W_t,
    \end{equation}
    where $\beta>0$ is a constant, and $g_t$ is a $\mathcal{F}_t$-measurable processes satisfying  
    \begin{equation}
\int^t_0 g_s ds<\infty,
    \end{equation}
    almost surely. Let $D\subset \mathbb{R}^d$ be a non-empty open set, $\tau$ a stopping time such that $X_{t\wedge  \tau}\in D$ for all $t>0$ almost surely, and $f\in C^2(D)$. Then one recovers the standard Itô's formula for $f(Y_{t\wedge\tau})$, that is 
    \begin{equation}
        f(Y_{t\wedge\tau})= f(y_0)+\int^{t\wedge\tau}_0 (\langle \nabla f(Y_s), g_s\rangle +\frac{\beta}{2}\Delta f(Y_s))ds+\int^{t\wedge\tau}_0 \beta \langle \nabla f(Y_s), dW_s\rangle.
    \end{equation}
\end{prop}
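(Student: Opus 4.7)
The plan is a standard localization-and-exhaustion argument that reduces the statement to the classical Itô formula applied to a globally $C^2$ function. Since the difficulty is that $f$ may blow up near $\partial D$, I would first build an increasing sequence of compact sets $(K_m)_{m\geq 1}$ with $K_m\subset \mathrm{int}(K_{m+1})$ and $\bigcup_m K_m = D$, for instance $K_m:=\{x\in D:\mathrm{dist}(x,\partial D)\geq 1/m,\ |x|\leq m\}$. For each $m$, I would pick a smooth cutoff $\chi_m\in C_c^\infty(D)$ with $\chi_m\equiv 1$ on $K_m$ and $\mathrm{supp}(\chi_m)\subset K_{m+1}$, and define $\tilde f_m:=\chi_m f$, extended by $0$ outside $K_{m+1}$. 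By construction $\tilde f_m\in C^2(\mathbb R^d)$ with compact support, and it coincides with $f$, together with its first and second derivatives, on $K_m$.

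Next I would introduce the exit times
\[
\sigma_m:=\inf\{t\geq 0\,:\,Y_t\notin K_m\},
\]
and set $\rho_m:=\tau\wedge\sigma_m$. On $[0,\rho_m]$ the process $Y$ takes values in the compact set $K_m\subset D$, so the classical Itô formula (applicable since $\tilde f_m\in C^2(\mathbb R^d)$, $Y$ has the semimartingale decomposition given in the hypothesis, and $g$ is pathwise integrable) yields
\begin{equation*}
\tilde f_m(Y_{t\wedge\rho_m})=\tilde f_m(y_0)+\int_0^{t\wedge\rho_m}\!\Bigl(\langle\nabla\tilde f_m(Y_s),g_s\rangle+\tfrac{\beta}{2}\Delta\tilde f_m(Y_s)\Bigr)ds+\int_0^{t\wedge\rho_m}\!\beta\langle\nabla\tilde f_m(Y_s),dW_s\rangle.
\end{equation*}
On $[0,\rho_m]$ the integrands agree with the corresponding derivatives of $f$, so $\tilde f_m$ can be replaced by $f$ in every term.

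The final step is to pass to the limit $m\to\infty$. The assumption that $Y_{t\wedge\tau}\in D$ almost surely for every $t$, combined with the continuity of the sample paths, implies that for almost every $\omega$ and every fixed $t\geq 0$ the compact set $\{Y_{s\wedge\tau}(\omega):s\in[0,t]\}$ lies in some $K_m$ (depending on $\omega$ and $t$). Hence $\sigma_m\geq \tau$ eventually along the sequence, so $t\wedge\rho_m\uparrow t\wedge\tau$ almost surely. Continuity of $f$ on $D$ gives $f(Y_{t\wedge\rho_m})\to f(Y_{t\wedge\tau})$; the Lebesgue integral converges by monotone/dominated convergence using continuity of $\nabla f$, $\Delta f$ on the (random) compact trajectory and the pathwise integrability of $g$; and the stochastic integral up to $\rho_m$ converges to the stochastic integral up to $\tau$ in probability by the standard localization property of Itô integrals, since $\nabla f(Y_{\cdot\wedge\tau})$ is locally bounded along paths.

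The main obstacle is the last point: justifying convergence of the stochastic integrals without an a priori global $L^2$ bound on $\nabla f(Y_s)$. I would handle this by invoking the local martingale property, namely that $M_t:=\int_0^t\beta\langle\nabla f(Y_{s\wedge\tau})\mathbf 1_{s\leq \tau},dW_s\rangle$ is well defined as a local martingale with localizing sequence $(\rho_m)$, and then observing that the identity above can be rewritten as $f(Y_{t\wedge\rho_m})-f(y_0)-\int_0^{t\wedge\rho_m}(\langle\nabla f,g_s\rangle+\tfrac{\beta}{2}\Delta f)ds = M_{t\wedge\rho_m}$, from which the $m\to\infty$ limit yields the claimed formula almost surely. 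Everything else is routine and essentially identical to the argument in Proposition~1 of \cite{JOHNSTON2026104772}.
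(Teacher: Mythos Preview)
Your argument is correct and follows the same localization-and-exhaustion philosophy as the paper: build an increasing family of ``good'' sets, extend $f$ globally so that the classical It\^o formula applies, stop the process before it exits the good set, and send the index to infinity. The difference is purely in the implementation. The paper defines its good sets $D_a$ in terms of uniform bounds on $f,\nabla f,\Delta f$ over small balls (so as to leave room for a mollifier), then convolves $f\mathbf 1_{D_a}$ with a smooth kernel $\phi_R$, applies It\^o's formula to the mollified function, and takes first $R\to\infty$ and then $a\to\infty$. You instead exhaust $D$ by the compact sets $K_m=\{x\in D:\mathrm{dist}(x,\partial D)\ge 1/m,\ |x|\le m\}$ and multiply by a smooth cutoff, which is arguably more direct: no mollification limit is needed, and the extension $\tilde f_m$ already coincides with $f$ on $K_m$ together with its derivatives. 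Your treatment of the stochastic integral via the local-martingale localizing sequence $(\rho_m)$ is also slightly cleaner than the paper's appeal to the dominated convergence theorem for stochastic integrals. One small imprecision: the claim ``$\sigma_m\ge\tau$ eventually'' is not quite what you use or need; what actually holds (and what your compactness argument gives) is that for each fixed $t$ and almost every $\omega$ one has $t\wedge\rho_m(\omega)=t\wedge\tau(\omega)$ for all $m$ sufficiently large, which is exactly what the limit requires.
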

\begin{proof}
Let us find a set where $f$ and it's first two derivatives are bounded, and furthermore this property always holds on a small ball around each point. Specifically, let us define
\begin{equation}
    D_a:=\{x\in D\vert \; \lvert f(y)\rvert< a,\;\; \lvert \nabla f(y)\rvert <a,\;\; \lvert \Delta f(y)\rvert< a \; \text{for} \;y\in B_{1/a}(x)\}.
\end{equation}
Since $f\in C^2(D)$ and $D$ is non-empty, this set must be non-empty for some $a>0$. Furthermore, let us define the first exist from this set as the stopping time
\begin{equation}
    \tau_a:=\inf\{t\geq 0\vert \; X_t\not\in D_a\}.
\end{equation}
Then we may consider a mollification of $f$ restricted to $D_a$. Particularly, let us fix $\phi\in C^\infty_c(\mathbb{R}^d)$ satisfying $\lvert \lvert \phi\rvert\rvert_1=1$, and let us define for every $R>0$ the function $\phi_R(x):=R^d\phi(Rx)$. Then we let
\begin{equation}
    f^a_R(x):=(f1_{D_a})*\phi_R.
\end{equation}
It follows from standard properties of mollifiers that the following hold: firstly for every $a,R>0$ one has $f^a_R\in C^\infty(\mathbb{R}^d)$, secondly $f^a_R\to f1_{D_a}$, $\nabla f^a_R\to \nabla f1_{D_a}$ and $\Delta f^a_R\to \Delta f1_{D_a}$ pointwise on $D_a$. See for instance the relevant chapter in \cite{evans1998partial}. Furthermore by a standard argument involving the limit definition of a derivative, one can pass the derivative to either the mollifier or $f$, and therefore by the compact support of $\phi$ and the definition of $D_a$, it is straightforward to see for sufficiently large $R>0$ one has
\begin{equation}
    \lvert f^a_R\rvert,\lvert \nabla f^a_R\rvert, \lvert \Delta f^a_R\rvert \leq a,
\end{equation}
uniformly on $D_a$. As a result, we are able to apply the standard Itô's formula up to $t\wedge \tau\wedge \tau_a$ to obtain
\begin{align}
        f^a_R(Y_{t\wedge\tau\wedge \tau_a})&= f^a_R(y_0)+\int^{t\wedge\tau\wedge \tau_a}_0 (\langle \nabla f^a_R(Y_{s\wedge \tau_a}), g_s\rangle +\frac{1}{2}\Delta f^a_R(Y_{s\wedge \tau_a}))ds\nonumber \\
        &+\int^{t\wedge\tau\wedge \tau_a}_0 \langle \nabla f^a_R(Y_{s\wedge \tau_a}), dW_s\rangle.
    \end{align}
    Then by the boundedness of $f^a_R$ and its derivatives, using the dominated convergence theorem and the dominated convergence theorem for stochastic integrals, taking the $R\to\infty$ limit yields
    \begin{align}
        f(Y_{t\wedge\tau\wedge \tau_a})&=  f(y_0)+\int^{t\wedge\tau\wedge \tau_a}_0 (\langle \nabla  f(Y_{s\wedge \tau_a}), g_s\rangle +\frac{1}{2}\Delta  f(Y_{s\wedge \tau_a}))ds\nonumber \\
        &+\int^{t\wedge\tau\wedge \tau_a}_0 \langle \nabla  f(Y_{s\wedge \tau_a}), dW_s\rangle.
    \end{align}
We note that we have written $f$ not $f1_{D_a}$ in the above expression since $Y_{s\wedge \tau_a}\not\in D_a$ only occurs either at $s=0$, in which the above holds vacuously, or for $s\geq \tau_a$, which corresponds to a set of measure $0$ and therefore doesn't contribute to the integrals above. Now we must simply show that $\tau\wedge \tau_a\to\tau$ almost surely as $a\to \infty$. Suppose not. Then, since $\tau_a$ is almost surely increasing, there has to be a set of positive measure $A\subset \Omega$ such that for every $\omega\in A$ one has $\lim_{a\to\infty}\tau_a(\omega)=\tau_\infty(\tau)<\tau$. Since $Y_{t\wedge \tau}$ has almost surely continuous sample paths, for every $\omega \in A$ one furthermore has $Y_{\tau_{a_i}\wedge \tau}(\omega)\to y(\omega)\in D$. However, by the definition of $D_a$, for fixed $\omega\in A$ there must exist a sequence $y_i$ of points such that $\max\{\lvert f(y_i)\rvert,\lvert \nabla f(y_i)\rvert, \lvert \Delta f(y_i)\rvert\}=i$ and $\lvert y_i-Y_{\tau_i}(\omega)\rvert \leq i^{-1}$, so $y_i\to y$ also. Therefore by the continuity of $f\in C^2(D)$ on $D$, one cannot have $y\in D$. Therefore we have a contradiction, and the proof is completed.
\end{proof}
\section{Langlart Domination Inequality}
This result was first established in \cite{Lenglart1977}. 
\begin{prop}\label{prop: Langlart ineq}
    Let $Y_t, Z_t>0$ be $(\mathcal{F}_t)_{t\geq 0}$-measurable processes. Suppose for any bounded stopping time $\tau$ one has
    \begin{equation}
        E[Y_\tau \vert \mathcal{F}_0]\leq  E[Z_\tau \vert \mathcal{F}_0].
    \end{equation}
    Then for every $r\in (0,1)$ one has
    \begin{equation}
        E\sup_{t\geq 0}Y_t^r \leq \frac{r^{-r}}{1-r}E\sup_{t\geq 0}Z_t^r.
    \end{equation}
\end{prop}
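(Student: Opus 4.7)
The plan is to follow the classical Lenglart approach, which combines a stopping-time truncation with Markov's inequality to obtain a tail bound of $Y^*$ in terms of the tail of $Z^*$, and then integrate via the layer-cake formula. The key observation is that from the pointwise-in-$\tau$ domination hypothesis, we can derive a distributional comparison between the running suprema of $Y$ and $Z$, which can then be converted into an $L^r$ bound for $r<1$.

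First, for fixed $T>0$ and arbitrary $\lambda,\delta>0$, I would introduce the two stopping times
\begin{equation*}
\tau_\lambda:=\inf\{t\geq 0:Y_t\geq \lambda\},\qquad \sigma_\delta:=\inf\{t\geq 0:Z_t\geq \delta\},
\end{equation*}
and apply the hypothesis to the bounded stopping time $\tau:=\tau_\lambda\wedge T\wedge \sigma_\delta$. On the event $\{\tau_\lambda\leq T\wedge\sigma_\delta\}$ we have $Y_\tau\geq \lambda$, while by definition of $\sigma_\delta$ the stopped process $Z_\tau$ stays below $\delta$ (strictly before $\sigma_\delta$; this is where the right-continuity/adaptedness of $Z$ is implicitly used). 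Combining these facts with $Y,Z\geq 0$ yields, after a Markov step,
\begin{equation*}
\lambda P(\tau_\lambda\leq T\wedge\sigma_\delta\mid \mathcal F_0)\leq E[Y_\tau\mid \mathcal F_0]\leq E[Z_\tau\mid \mathcal F_0]\leq \delta.
\end{equation*}
Using the inclusion $\{Y^*_T\geq \lambda\}\subseteq \{\tau_\lambda\leq T\wedge\sigma_\delta\}\cup\{\sigma_\delta\leq T\}\subseteq\{\tau_\lambda\leq T\wedge\sigma_\delta\}\cup\{Z^*_T\geq \delta\}$, where $Y^*_T:=\sup_{t\leq T}Y_t$ and similarly for $Z^*_T$, I obtain the key distributional estimate
\begin{equation*}
P(Y^*_T\geq \lambda\mid \mathcal F_0)\leq \frac{\delta}{\lambda}+P(Z^*_T\geq \delta\mid \mathcal F_0),\qquad \forall\,\delta,\lambda>0.
\end{equation*}

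Second, I would apply the layer-cake identity $E[(Y^*_T)^r\mid \mathcal F_0]=r\int_0^\infty \lambda^{r-1}P(Y^*_T\geq \lambda\mid \mathcal F_0)\,d\lambda$, plug in the distributional estimate with the deterministic choice $\delta=r\lambda$, and also use the trivial bound $P\leq 1$ on the part of the $\lambda$-domain where the estimate degenerates. A change of variable $u=r\lambda$ in the tail contribution converts $\int_0^\infty \lambda^{r-1}P(Z^*_T\geq r\lambda\mid \mathcal F_0)\,d\lambda$ into $r^{-r}\cdot r^{-1}E[(Z^*_T)^r\mid \mathcal F_0]$; the divergent-looking constant term $\int \lambda^{r-1}(\delta/\lambda)d\lambda$ is absorbed by truncating against $P\leq 1$ below a suitable threshold and optimising. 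Putting the pieces together yields, after taking expectations,
\begin{equation*}
E[(Y^*_T)^r]\leq \frac{r^{-r}}{1-r}E[(Z^*_T)^r].
\end{equation*}
Finally, monotone convergence as $T\to\infty$ passes the bound to $\sup_{t\geq 0}$.

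The main obstacle is the integration/optimisation in the second step: a naive choice of $\delta$ either produces a divergent integral of $\delta(\lambda)/\lambda$ at one endpoint or fails to integrate $P(Z^*\geq \delta(\lambda))$ at the other. Recovering the exact constant $r^{-r}/(1-r)$ requires balancing the two contributions carefully, using the elementary fact $P\leq 1$ on the low-$\lambda$ regime and the linear rescaling $\delta=r\lambda$ on the high-$\lambda$ regime. A subsidiary technical point is ensuring that $Z_\tau\leq \delta$ holds at the stopping time $\sigma_\delta$, which relies on the tacit regularity of the paths of $Z$ (e.g.\ right-continuity, or approximating $\sigma_\delta$ from below by $\sigma_{\delta-\varepsilon}$ and passing to the limit).
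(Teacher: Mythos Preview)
The paper does not actually prove this proposition: its entire proof is the one-line reference ``See Chapter~IV, Proposition~4.7 in \cite{revuz2013continuous}.'' Your sketch reproduces the classical Lenglart argument that underlies that reference, so you have in fact supplied more than the paper does.

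Your outline is correct in structure: the stopping-time truncation yields the tail domination $P(Y^*_T\geq\lambda\mid\mathcal F_0)\leq \delta/\lambda + P(Z^*_T\geq\delta\mid\mathcal F_0)$, and integrating this against $r\lambda^{r-1}\,d\lambda$ gives the $L^r$ comparison. The one point that is genuinely delicate, and that you rightly flag as the main obstacle, is the optimisation producing the sharp constant $r^{-r}/(1-r)$; the deterministic choice $\delta=r\lambda$ alone makes the $\delta/\lambda$ contribution diverge, so one must combine it with the trivial bound $P\leq 1$ on a data-dependent low-$\lambda$ range (or, equivalently, allow $\delta$ to be $\mathcal F_0$-measurable and optimise over it), exactly as you indicate. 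Your remark about needing right-continuity of $Z$ to control $Z_\tau$ at $\sigma_\delta$ is also apposite; the paper's statement tacitly assumes enough path regularity for this to hold.
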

\begin{proof}
    See Chapter IV, Proposition 4.7 in \cite{revuz2013continuous}.
\end{proof}

\section*{Acknowledgements}
The first author was supported by the ERC Synergy OCEAN grant (ERC-2022-SyG, 101071601). The second author is supported by the project CONVIVIALITY (ANR-23-CE40-0003) of the French National Research Agency.

This work was presented in a poster session at the Eurandom conference in Eindhoven, December 2025. We thank the attendants for their useful conversations. Thanks additionally to Christian Robert and Sotirios Sabanis.

\bibliographystyle{plain}

\bibliography{ref.bib}

\end{document}